\numberwithin{equation}{section}
\newtheorem{theorem}{Theorem}[section]
\newtheorem{lemma}[theorem]{Lemma}
\newtheorem{definition}[theorem]{Definition}
\theoremstyle{definition}
\def\t{\widetilde}
\def\XXint#1#2#3{{\setbox0=\hbox{$#1{#2#3}{\int}$}
     \vcenter{\hbox{$#2#3$}}\kern-.5\wd0}}
\def\p{\partial}
\newcommand{\RN}{\mathbb{R}^2}
\newcommand{\intr}{\int_{\mathbb{R}^2}}
\newcommand{\e}{\varepsilon}
\newcommand{\sscp}{\scriptscriptstyle}
\begin{document}
\title[Uniqueness for bubbling solutions with collapsing singularities]
{Uniqueness for bubbling solutions with collapsing singularities}

\author{Youngae Lee}
\address{Youngae ~Lee,~National Institute for Mathematical Sciences, 70 Yuseong-daero 1689 beon-gil, Yuseong-gu, Daejeon, 34047, Republic of Korea}
\email{youngaelee0531@gmail.com}
\author{ Chang-Shou Lin}
\address{ Chang-Shou ~Lin,~Taida Institute for Mathematical Sciences, Center for Advanced Study in
Theoretical Sciences, National Taiwan University, Taipei 106, Taiwan}
\email{cslin@math.ntu.edu.tw}

\date{\today}
\begin{abstract}The seminal work \cite{bm} by  Brezis and Merle showed that the bubbling solutions of the mean field equation  have the property of  mass concentration.  Recently, Lin and Tarantello  in \cite{lt} found that  the  "bubbling implies mass concentration" phenomena might not hold  if there is a   collapse of singularities.  Furthermore, a sharp estimate \cite{llty} for the bubbling solutions has been obtained. In this paper, we prove  that  there exists at most one sequence of bubbling solutions if the collapsing singularity occurs.  The main difficulty comes from that after re-scaling, the difference of two solutions locally converges to an element in  the kernel space of  the linearized operator. It is well-known that the kernel space is  three dimensional.
   So the main technical ingredient of  the proof is  to show that the limit after re-scaling is orthogonal to the  kernel space.  
\end{abstract}

 \maketitle

\section{Introduction}
We are concerned with the following mean field type equation:
\begin{equation}
\label{mfd}\left\{\begin{array}{l}
\Delta_M u+\rho\left(\frac{h_*e^{u}}{\int_Mh_*e^{u}{d}v_g}-1\right)=4\pi\sum_{q_i\in S}\alpha_{i}(\delta_{q_i}-1)\ \textrm{in} \ M,
\\  \int_M u  {d}v_g=0,
\end{array}\right.\end{equation}
where $(M,ds)$ is a compact Riemann surface, $\rho>0$, $dv_g$ is the volume form, $\Delta_M$ is the Laplace-Beltrami operator on $(M,ds)$,  $S\subseteq M$ is a finite set of distinct points $q_i$, $\alpha_{q_i}>-1$,
  and $\delta_{q_i}$ is the Dirac measure at $q_i$. The point $q_i$ with Dirac measure is called vortex point or singular source.   Throughout this paper, we always assume that $|M|=1$, $h_*>0$ and $h_*\in C^{3}(M)$.  The equation \eqref{mfd} arises in various different fields.  In conformal geometry, \eqref{mfd} is related  to the Nirenberg problem of finding  prescribed Gaussian curvature
if $S=\emptyset$, and the existence of a positive constant curvature metric with conic
singularities if $S\neq\emptyset$ (see  \cite{ t} and the references therein).   The equation \eqref{mfd} is also related to the self-dual equation of the relativistic Chern-Simons-Higgs model.
For the recent developments related to \eqref{mfd}, we refer to the readers to   \cite{bamar,   bt1, bclt, bm,  ck,cl1, cl2, cl3, cl4,  l1,   ly1,  lw0, lw1, m1, m2,   m4,   nt2,t, y,y2} and references therein.

   In the  seminal work \cite{bm} by Brezis and Merle,  the blow up behavior of solutions for \eqref{mfd} has been studied as follows:

\medskip
\noindent {\bf{Theorem A}}. \cite{bm,ls,bt1}  {\em  Given fixed each vortex point $q_i\in S$, suppose   $\alpha_i \in \mathbb{N}$, $i=1,\cdots,N$. We assume that $h^*$ is a positive smooth function. Let  $u^*_k$ be a sequence of blow up solutions for \eqref{mfd}, that is: $\text{max}_M\ u_k^* \rightarrow +\infty$ as $k \rightarrow +\infty$. Then there is a non- empty finite set $\mathcal{B}$ (blow up set)
such that,
 \[\rho\frac{h^*e^{u_k^*}}{\int_Mh^*e^{u_k^*}{d}v_g}\to\sum_{p\in \mathcal{B}}\beta_p\delta_p,\ \ \textit{where}\ \ \beta_p\in 8\pi\mathbb{N}.\]}

\medskip
\indent   For equation \eqref{mfd}, we call $\rho\frac{h^*e^{u^*}}{\int_Mh^*e^{u^*}{d}v_g}$ the \textit{mass distribution} of the solution $u^*$. Following this terminology,  Theorem A states that:  When the vortex points are not collapsing, the mean field equation possesses the property of the so-called "blow up solutions has the mass concentration property".
The version of Theorem A for the following Chern-Simons-Higgs (CSH) equation was also proved in \cite[Theorem 3.1]{ck}:
\begin{equation}\label{csh}
  \Delta_{\mathbb{T}} u+\frac{1}{\varepsilon^2}e^u(1-e^u)=4\pi\sum_{j=1}^N\delta_{p_j}\ \ \textrm{in}\ \ \mathbb{T},
\end{equation}
where $\varepsilon>0$
 is a  small parameter and $\mathbb{T}$ is a flat torus.
The equation (\ref{csh}) was derived from the CSH model  to describe vortices in high temperature
superconductivity, and has been extensively studied during few decades.  We refer the readers to  \cite{JW, T2, y, ck, CKL, ly1, ly2, nt2} and references therein.
Among them, Lin and Yan in \cite{ly2} proved the local uniqueness result of bubbling solutions for \eqref{csh}, that is, if $u_{\varepsilon,1}$ and $u_{\varepsilon,2}$ are two sequence of bubbling solutions blowing up at the same points under some non-degenerate condition, then $u_{\varepsilon,1}=u_{2,\varepsilon}$ for small $\varepsilon>0$.  By applying the idea in \cite{ly2}, the local uniqueness result of bubbling solutions of \eqref{mfd} was obtained in \cite{bjly}.  We note that the works \cite{ly2,bjly} are concerned with the local uniqueness of bubbling solutions when the vortex points are not collapsing.

  However,  there is a big  difference  when the collapsing singularities are considered. First, Lin and Tarantello in \cite{lt} observed a new phenomena such that  blow-up solutions with collapsing singularities might not have  the "concentration" property of its mass distribution. The general version was studied in \cite{llty}. To describe the results, let us consider the following equation:
\begin{equation}
\label {0.3}
\left\{\begin{array}{l}
\Delta_M u^*_t+\rho\left(\frac{h^*e^{u^*_t}}{\int_Mh^*e^{u^*_t}{d}v_g}-1\right)\\=4\pi\sum_{i=1}^d\alpha_i(\delta_{q_i(t)}-1)+ 4\pi\sum_{i=d+1}^N\alpha_{i}(\delta_{q_i}-1)\ \mbox{in} \ M,\\
 \int_M u^*_t {d}v_g=0,
\end{array}\right.
\end{equation}
where $\lim_{t\to0}q_i(t)= \mathfrak{q}\notin\{q_{d+1}, \cdots, q_N\}$, $\forall\ i = 1, \cdots , d$ and $q_i(t) \neq q_j(t)$ for $i \neq j \in \{ 1,\cdots,d \}$.  Then the following holds:
\medskip

\noindent {\bf{Theorem B}}. \cite{lt,llty}  {\em  Assume $\alpha_i\in \mathbb{N},~1\leq i\leq N.$ Let $u_t^*$ be a sequence of blow up solutions of (\ref{0.3}) with $\rho \notin 8\pi \mathbb{N}.$ Then $u_t^*$ blows up only at $\mathfrak{q}\in M$. Furthermore, there exists a function $w^*$ such that
$$u_t^*\to w^*~\mathrm{in}~C_{\mathrm{loc}}^2(M\setminus\{\mathfrak{q}\})$$
as $t\to0$, and $w^*$ satisfies:
\begin{equation}
\label{0.8}\left\{\begin{array}{l}
 \Delta_M w^*+(\rho-8m\pi)\left(\frac{h^*e^{w^*}}{\int_Mh^*e^{w^*}}-1\right)
\\ =4\pi\left(\sum_{i=1}^d\alpha_i-2m\right)(\delta_q-1) + 4\pi\sum\limits_{i=d+1}^N \alpha_i (\delta_{q_i}-1) \ \ \textrm{in}\ \ M, \\  \int_M w^* {d}v_g=0,
\end{array}\right.
\end{equation}
for some $m\in\mathbb{N}$ with $1\leq m\leq[\frac12\sum_{i=1}^d\alpha_i]$ \footnote{$[x]$ stands for the integer part of $x$.} and $\rho > 8m\pi $.}

\medskip
\indent
So Theorem B tells us that the mass concentration does no longer hold  if the collapsing singularity occurs. Indeed, we have
$\lim_{t\to0}\int_M h^*e^{u_t^*}dv_g<+\infty$, which is different from the situation described in Theorem A.
We note that Theorem B could be improved provided that  the following nondegeneracy condition holds:
\begin{definition}
  A solution $w^*$ of (\ref{0.8}) is said non-degenerate, if the linearized problem
\begin{align}
\label{1.6}
\Delta_M \phi+(\rho-8m\pi)\frac{h^*e^{w^*}}{\int_Mh^*e^{w^*}{d}v_g}
\left(\phi-\frac{\int_Mh^*e^{w^*}\phi{d}v_g}{\int_Mh^*e^{w^*}{d}v_g}\right)=0,\quad \int_M\phi{d}v_g=0
\end{align}
only admits the trivial solution.
\end{definition}

Using the transversality theorem, we can always choose a positive smooth function $h^*$ such that $w^*$ is non-degenerate. See Theorem 4.1 in \cite{llwy}.
Based on the non-degeneracy assumption for (\ref{0.8}), some sharper estimates on the bubbling solutions of (\ref{0.3}) were obtained in \cite{llty} (see also section \ref{sec_pre} below).

For the simplicity, throughout this paper, we focus on  the case where the collapsing vortices are only {\em two}, that is,
\begin{equation}\label{main assume}
  d=2, \alpha_1=\alpha_2=1, \alpha_i\in\mathbb{N}\ \ \textrm{for}\ \ i=3,\cdots,N.
\end{equation}
Our main goal is to prove the local uniqueness of blow up solutions of \eqref{0.3} with collapsing singularities.
\begin{theorem}\label{thm_nonconcen} Assume that \eqref{main assume} holds and $\rho\notin8\pi\mathbb{N}$.  Suppose that  $u_{t,1}^{*}$ and $u_{t,2}^{*}$ are two blow up solutions of \eqref{0.3}. Assume that $u_{t,1}^{*}, u_{t,2}^{*}\to w^*$ in $ C_{\textrm{loc}}(M\setminus\{\mathfrak{q}\})$, where $w^*$ is a non-degenerate solution of (\ref{0.8}) with $m=1$. Then  $u_{t,1}^{*}=u_{t,2}^{*}$ for sufficiently small $t>0$.
\end{theorem}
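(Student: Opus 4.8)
The plan is to argue by contradiction: suppose $u_{t,1}^*\neq u_{t,2}^*$ along a sequence $t\to 0$, set $\eta_t:=u_{t,1}^*-u_{t,2}^*$, and normalize $\|\eta_t\|_{L^\infty(M)}=1$. Since both solutions converge to the same non-degenerate $w^*$ in $C_{\mathrm{loc}}(M\setminus\{\mathfrak q\})$, a standard elliptic estimate away from $\mathfrak q$ combined with the non-degeneracy of the linearized operator \eqref{1.6} forces $\eta_t\to 0$ in $C_{\mathrm{loc}}^2(M\setminus\{\mathfrak q\})$; hence the $L^\infty$-norm of $\eta_t$ is attained (up to $o(1)$) in shrinking neighborhoods of $\mathfrak q$, and the whole game is played in the rescaled "bubble" coordinates. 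Using the sharp asymptotic expansions of \cite{llty} recalled in Section \ref{sec_pre}, I would write both $u_{t,i}^*$ near $\mathfrak q$ in terms of the scaling parameter $\e=\e(t)$ (the bubble concentration scale) and the standard bubble profile $U$ solving $\Delta U + e^U = 0$ on $\R^2$, subtract, and rescale: set $\widetilde\eta_t(y):=\eta_t(\e y+\text{(center)})/\|\eta_t\|_\infty$. Passing to the limit, $\widetilde\eta_t\to \widetilde\eta$ solves the linearized bubble equation $\Delta \widetilde\eta + e^{U}\widetilde\eta = 0$ on $\R^2$ with $|\widetilde\eta|\le 1$, so $\widetilde\eta$ lies in the three-dimensional kernel spanned by $\partial_{y_1}U$, $\partial_{y_2}U$, and $y\cdot\nabla U + 2$.

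The heart of the proof — and the main obstacle — is then to show $\widetilde\eta\equiv 0$, i.e. that the limit is orthogonal to all three kernel elements, which contradicts $\|\widetilde\eta_t\|_\infty=1$ once one checks the normalization survives the limit (this last point itself requires a careful comparison of the maximum location with the bubble center, using the $C_{\mathrm{loc}}$-convergence away from $\mathfrak q$ to rule out the mass escaping to the neck region). To kill the kernel I would derive three independent integral (Pohozaev-type) identities. The translational components are handled by testing the equation for $\eta_t$ against $\partial_{x_j}$-type quantities and integrating over a fixed small ball $B_r(\mathfrak q)$: the boundary terms are $O(\e^{?})$ by the $C^2_{\mathrm{loc}}$-decay of $\eta_t$ outside, and the interior terms, expanded via the \cite{llty} estimates, produce a coefficient that pins the $\partial_{y_j}U$-components to zero — here the precise location of the collapsing vortices $q_1(t),q_2(t)$ and the symmetry forced by $\alpha_1=\alpha_2=1$ in \eqref{main assume} enter decisively. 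The dilation component is the delicate one: testing against $y\cdot\nabla(\cdot)$ gives a Pohozaev identity whose leading balance involves $\rho-8\pi$ (recall $m=1$, $\rho\notin 8\pi\mathbb N$), and the non-vanishing of $\rho - 8\pi$ is exactly what prevents the dilation mode from appearing — this is the analogue of the non-degeneracy input that drives the uniqueness arguments of \cite{ly2,bjly}, but complicated here by the collapsing-singularity contributions to the next-order expansion.

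Once $\widetilde\eta\equiv 0$ is established on $\R^2$, I would upgrade this to $\eta_t = o(\|\eta_t\|_\infty)$ uniformly on the whole bubble region (not merely locally), by a Harnack/maximum-principle argument on annuli matching the interior bubble estimate to the exterior $C_{\mathrm{loc}}$-estimate through the neck; combined with the exterior decay this yields $\|\eta_t\|_{L^\infty(M)}=o(1)$, contradicting the normalization $\|\eta_t\|_\infty=1$. Therefore $u_{t,1}^*=u_{t,2}^*$ for small $t$. The main technical difficulty, as flagged in the abstract, is the dilation-direction orthogonality: it demands pushing the asymptotic expansion of \cite{llty} to the order at which the collapsing pair $q_1(t),q_2(t)$ contributes, and tracking cancellations between the bubble's self-interaction and the regular part of the Green's function; the translational directions, while also requiring the sharp estimates, are comparatively rigid because of the $\alpha_1=\alpha_2=1$ symmetry assumption.
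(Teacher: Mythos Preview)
Your overall architecture is right: contradiction, normalize the difference, show it vanishes away from $\mathfrak q$ by the non-degeneracy of $w^*$, rescale at the bubble to land in the three-dimensional kernel $\mathrm{span}\{Y_0,Y_1,Y_2\}$, then kill each component. But you have the roles of dilation and translation reversed, and the method you propose for the dilation mode is precisely the one that does \emph{not} close.

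The paper (and its introduction says this explicitly) handles the translation directions $Y_1,Y_2$ by Pohozaev-type identities in the spirit of \cite{ly2}: one differentiates in the $e_l$-direction, integrates over a ball of radius $\sim t^{-1}$ in rescaled variables, and the conclusion $b_1=b_2=0$ comes from the invertibility of $\nabla^2\ln h_t$ at the blow-up point (its determinant is $-16+O(t)$), not from any $\alpha_1=\alpha_2=1$ symmetry. Your proposal to run a dilation Pohozaev (test against $y\cdot\nabla$) and read off $b_0=0$ from the nonvanishing of $\rho-8\pi$ does not work here: because the mass does \emph{not} concentrate (Theorem~B), the local bubble mass is $\rho_t^{(i)}=8\pi+O(t^2\ln t)$ regardless of $\rho$, and the $y\cdot\nabla$ identity on the bubble only yields the relation $-4A_t=-2A_t+O(t)$ for $A_t=-\int_{B_{2R_0t}}\Delta\zeta_t$ (this is Lemma~\ref{lem_estat}), i.e.\ an estimate on the integrated Laplacian of $\zeta_t$, not on $b_0$. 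The quantity $\rho-8\pi$ enters only globally, through the limiting equation for $w$, and is already consumed in proving $\zeta_t\to 0$ on $M\setminus\{0\}$; it gives you nothing new at the bubble scale.

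What the paper actually does for $b_0=0$ is different and is the technical heart of the argument: it introduces explicit auxiliary functions $\eta_1(z)=-2(1+|z|^2)^{-1}$ and $\eta_2(z)=\tfrac{4}{3}\ln(1+|z|^2)Y_0(z)+\tfrac{8}{3}(1+|z|^2)^{-1}$ solving $L\eta_1=-8(1+|z|^2)^{-2}$ and $L\eta_2=16Y_0(1+|z|^2)^{-2}$, tests the equation for $\widetilde\zeta_t$ against $\eta_1\chi_t$ and $\eta_2\chi_t$, and controls the resulting boundary-annulus terms using Green's-representation estimates for $\zeta_t$ (Lemma~\ref{ref}, imported from \cite{lly}) together with the exterior decay $\zeta_t\to 0$. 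Only after this chain does one get $\int \tfrac{Y_0\widetilde\zeta_t}{(1+|z|^2)^2}\to 0$ and hence $b_0=0$. Your proposal contains none of this machinery, and without it the dilation orthogonality is a genuine gap.

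Finally, your closing ``Harnack/maximum-principle on annuli'' is too vague for this problem. The paper locates the maximum point $x_t^*$ of $|\zeta_t|$ in the neck $t^2\ll |x_t^*-tp_t^{(1)}|\le \tfrac{tR_0}{2}$, rescales at the scale $s_t=|x_t^*-tp_t^{(1)}|$ to obtain a bounded harmonic limit $\hat\zeta_0\equiv\pm 1$, and then derives a contradiction from the Green's-representation estimate (Lemma~\ref{ref}(ii)) combined with the improved bound $A_t=O(t)$ from the dilation Pohozaev. A bare Harnack argument would not distinguish the neck from the bubble core and would not give the needed $o(1)$ in the intermediate annulus.
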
We remark that the study of blow up solutions of \eqref{0.3} with collapsing singularities is also important to  compute the topological degree for the  Toda system as noticed in \cite{llwy,llyz}, where the degree counting of the whole system is reduced to  computing the
degree of the corresponding shadow system (see  \cite[Theorem 1.4]{llwy}).  Thus  it is inevitable to   encounter with  the phenomena of collapsing singularities when we want to find a priori bound for solutions of  an associated shadow system.  For the details, we refer to the readers to \cite{llwy,llyz}.

In order to prove Theorem \ref{thm_nonconcen}, we need to analyze the asymptotic behaviour of
$\zeta_t=\frac{u_{t,*}^{(1)}-u_{t,*}^{(2)}}{\|u_{t,*}^{(1)}-u_{t,*}^{(2)}\|_{L^\infty(M)}}$. After a suitable scaling on small domain of $\mathfrak{q}$, $\zeta_t$ converges to an entire solution of the linearized problem associated to the Liouville equation:
\begin{equation}
\label{liouville}
\Delta v+ e^{v}=0\quad\textrm{in}\ \mathbb{R}^2,
\end{equation}where $\Delta=\sum_{i=1}^2\frac{\partial^2}{\partial x_i^2}$
denotes the standard Laplacian in $\mathbb{R}^2$.
A solution $v$ of \eqref{liouville} is completely classified \cite{cli1} such that
\begin{equation}\label{17}
v\left( z\right)=v_{\mu,a}(z) = \ln \frac{8e^{\mu}}{ ( 1+e^{\mu}  \vert z +a  \vert ^{2} ) ^{2}},  \quad \mu  \in \mathbb{R},\;a=(a_1,a_2) \in \mathbb{R}^2.
\end{equation}
The freedom in the choice of $\mu$ and $a$ is due to the invariance of equation \eqref{liouville} under dilations and translations.
The linearized operator $L$ relative to $v_{\sscp 0,0}$  is defined by,
\begin{equation}\label{entirelinear}L\phi:=\Delta \phi+\frac{8}{(1+|z|^2)^2}\phi\quad\textrm{in}\ \mathbb{R}^2.
\end{equation}
  In \cite[Proposition 1]{bp}, it has been proved that any kernel of $L$ is a linear combination of $Y_0$, $Y_1$, $Y_2$,
where
\begin{equation}
\begin{aligned}\left\{ \begin{array}{ll}\label{z01}Y_0(z):= \frac{1-|z|^2}{1+ |z|^2}=-1+\frac{2}{1+|z|^2}=\frac{\partial v_{\mu,a}}{\partial \mu}\Big|_{(\mu,a)=(0,0)},\\
Y_1(z):= \frac{z_1}{1+ |z|^2}=-\frac{1}{4}\frac{\partial v_{\mu,a}}{\partial a_1}\Big|_{(\mu,a)=(0,0)},
\\ Y_2(z):=\frac{z_2}{1+ |z|^2}=-\frac{1}{4}\frac{\partial v_{\mu,a}}{\partial a_2}\Big|_{(\mu,a)=(0,0)}.
\end{array}\right. \end{aligned}
\end{equation}
The orthogonality to $Y_1, Y_2$ can be obtained by applying a suitable Pohozaev-type identities as in \cite{ly2}.  However,  due to the non-concentration of mass, we meet a new difficulty to show  the orthogonality with $Y_0$. In order to overcome this obstacle, we apply the Green's representation formula with some delicate analysis. This idea comes from the recent work \cite{lly}. In \cite{lly}, it was proved that if $w^*$ is a non-degenerate solution of (\ref{0.8}), and the assumptions \eqref{main assume}  and $\rho\notin8\pi\mathbb{N}$ hold, then  there is a blow up solution $u_{t}^{*}$ of \eqref{0.3} such that $u_{t}^{*}\to w^*$ in $ C_{\textrm{loc}}(M\setminus\{\mathfrak{q}\})$.

This paper is organized as follows.
In section \ref{sec_pre}, we  review some known sharp estimates for blow up solutions of \eqref{0.3}.
In section \ref{sec_est}, we analyze the limit behavior of $\zeta_t$ in   $M\setminus\{\mathfrak{q}\}$ and
a small neighborhood of $\mathfrak{q}$ respectively. Finally, we  prove Theorem \ref{thm_nonconcen} by using suitable Pohozaev-type identities and Green's representation formula.

\section{Preliminary}\label{sec_pre}
Let $G(x,p)$ denote the Green's function for the Laplace Beltrami operator $\Delta_M$ on $M$, that is
\begin{equation} \label{def_g}
\Delta_M G(x,p)+(\delta_p-1)=0,~\quad\int_MG(x,p)d\sigma(x)=0.
\end{equation}
We recall the following assumption:
\begin{equation*}
  d=2, \alpha_1=\alpha_2=1, \alpha_i\in\mathbb{N}\ \ \textrm{for}\ \ i=3,\cdots,N.
\end{equation*}
Let $u_t^*$ be a sequence of blow up solutions of (\ref{0.3}) and $w^*$ be the limit of $u_t^*$ in Theorem B.
Set \begin{align}
\label{def-ut-function}
u_t(x)=u_t^*(x)+4\pi\sum_{i=1}^{2}G(x,q_i(t))+4\pi\sum_{i=3}^N\alpha_iG(x,q_i),
\end{align}
and
\begin{align}
\label{def-w-function}
w(x)=w^*(x) +4\pi\sum_{i=3}^N\alpha_iG(x,q_i).
\end{align}
We may choose a suitable coordinate centered at $\mathfrak{q}$ and \[\mathfrak{q}=0,\ \ q_1(t)=t\underline{e},\ \ q_2(t)=-t\underline{e},\ \ \textrm{ where}\ \ \underline{e}\ \ \textrm{ is a fixed unit vector in}\ \ \mathbb{S}^1.\]
 We can rewrite equation (\ref{0.3}) as follows
\begin{equation}
\label{equation-ut}\left\{\begin{array}{l}
\Delta_M u_t+\rho\left(\frac{h(x)e^{u_t(x)- G_t(x)}}
{\int_Mhe^{u_t-G_t}{d}v_g}-1\right)=0,\\ \int_M u_t {d}v_g=0,
\end{array}\right.
\end{equation}
where
\begin{align}
\label{def_of_Gt}
 & G_t (x): =4\pi   G(x,t\underline{e})+4\pi  G(x,-t\underline{e}),\ \ \mathrm{and}\\
& h(x): =h_*(x)\exp(-4\pi\sum_{i=3}^N\alpha_iG(x,q_i))\ge 0,\ \ h\in C^{3}(M), \ \ h(0)>0.\label{prop_of_h}
\end{align}  From Theorem B, we have that $u_t(x)\to w(x)+8 \pi G(x,{0})$ in $C_{\mathrm{loc}}^2(M\setminus\{{0}\})$ and $w$ satisfies
\begin{equation}
\label{thc.equationw}\left\{\begin{array}{l}
\Delta_M w+(\rho-8\pi)\left(\frac{h(x)e^{w(x) }}
{\int_Mhe^{w }{d}v_g}-1\right)=0,\\ \int_M w {d}v_g=0,\ \ \ w\in C^2(M).
\end{array}\right.
\end{equation}
We  assume that the local isothermal coordinate system satisfies
\begin{equation}
\label{def-psi0}ds^2=e^{2\varphi(x)}|dx|^2,\ 
\varphi(0)=\nabla\varphi(0)=0,
\end{equation}
that is, $e^{2\varphi}\Delta_M=\Delta$, where   $\Delta=\sum_{i=1}^2\frac{\partial^2}{\partial x_i^2}$
denotes the standard Laplacian in $\mathbb{R}^2$. Fix     a small  constant $r_0\in(0,\frac{1}{2})$.
It is well known that the conformal factor $\varphi$ is a solution of 
\begin{equation}\label{gauss}
-\Delta\varphi=e^{2\varphi}K\ \ \textrm{in}\ \ B_{r_0}(0),
\end{equation}where $K(p)$ is the Gaussian curvature at $p\in M$. \\ 
Let $\bar{\varphi}(x)$ satisfy the following local problem:
\begin{equation}\label{var}\Delta \bar{\varphi} -e^{2\varphi}\rho= 0 \ \ \textrm{in}\ \  B_{r_0} (0),\ \  \bar{\varphi}(0) =\nabla\bar{\varphi}(0) = 0.\end{equation}
We denote
\begin{equation}\label{def-psi1}\psi=2\varphi+\bar{\varphi}.
\end{equation}
In view of \eqref{def-psi0} and \eqref{var}, we note that
\begin{equation}\label{note}
  \nabla_x \psi(x)=\nabla \psi(0)+O(|x|)=O(|x|), \ \nabla_x \bar{\varphi}(x)=O(|x|),  \ \nabla_x {\varphi}(x)=O(|x|) \ \textrm{for} \ \ x\in B_{r_0}(0).
\end{equation}
By using the local coordinate, we also  set the regular part of Green function $G(x,q_i(t))$ to be
\begin{equation}\label{defr}R(x,q_i(t))=G(x,q_i(t))+\frac{1}{2\pi}\ln |x-q_i(t)|.\end{equation}
Let
\begin{equation}
\label{def-gt}
\begin{aligned}
R_t(x):&=4\pi R(x, t\underline{e})+4\pi R(x,-t\underline{e}).
\end{aligned}
\end{equation}
Therefore we can formulate the local version of (\ref{equation-ut}) around ${0}$ as follows:
\begin{equation}
\label{eq115}
\Delta\bar u_t + h_1(x)|x-t\underline{e}|^{2}|x+t\underline{e}|^{2} e^{\bar u_t(x)} = 0\ \text{in}\ B_{r_0}(0),
\end{equation}
where
\begin{equation}
\label{def-h1}\bar u_t(x)=u_t(x)-\ln \int_Mhe^{u_t-G_t}{d}v_g-\bar{\varphi}(x), \quad\ 
h_1(x)=\rho h(x)e^{\psi(x)-R_t(x)},\quad h_1(x)>0~\mathrm{in}~B_{r_0}(0).
\end{equation}

In order to study the behaviour of $\bar u_t$ near the origin, we consider the scaled sequence
\begin{equation}
\label{1-vt}
v_t(y) =\bar u_t(ty) + 6\ln \ t,\ x\in B_{\frac{r_0}{t}}(0),
\end{equation}
which satisfies:
\begin{equation}
\label{eq117}
\begin{cases}
\Delta v_t + h_t(y) e^{v_t(y)} = 0\ \ \text{in}\ B_{\frac{r_0}{t}}(0),\\
\int_{B_{\frac{r_0}{t}}(0)} h_t(y)  e^{v_t(y)}{d}y \leq C,
\end{cases}
\end{equation}
with
\begin{equation}
\label{eq118}
h_t(y) = h_1(ty)|y-\underline{e}|^{2} |y+\underline{e}|^{2}=\rho h(ty)e^{\psi(ty)-R_t(ty)}|y-\underline{e}|^{2} |y+\underline{e}|^{2}.
\end{equation}
In \cite{llty}, the following result was obtained.
\medskip

\noindent {\bf{Theorem C}}. \cite[Theorem 1.2, Section 5]{llty}  {\em
 Assume that \eqref{main assume} holds and $\rho\notin8\pi\mathbb{N}$.  Suppose that  $u_t$ be a sequence of blow up solutions of \eqref{equation-ut}. Then the scaled function $v_t$ defined by  (\ref{1-vt}) blows up at $0$.
}

\medskip

\indent Now  we are going  to give refined estimates than those provided in Theorem B and Theorem C under   the non-degeneracy assumption for (\ref{thc.equationw}).
To state our result, we fix a constant $R_0>2$, and define the following notations:
\begin{align}
\label{lam}
&\lambda_t=\max_{B_{r_0}(0)}v_t=v_t(p_t),\\
\label{1.7}
&\rho_t=\frac{\int_{B_{t {R_0}}(tp_t)}\rho  {h}e^{{u}_t-G^{(2)}_t}{d}v_g}{\int_M
{h}e^{  {u}_t-G^{(2)}_t}{d}v_g},
,\\
\label{1.8}
&C_{t}=\frac{1}{8}h_1(tp_{t})|p_{t}-\underline{e}|^{2}|p_{t}+\underline{e}|^{2},
\\
\label{3.8}
&\widetilde\phi_t(x)= u_t(x)-w(x)-\rho_tG(x,tp_t),
\end{align}
Let
$\|\t\phi_t\|_*=\|\t\phi_t\|_{C^1(M\setminus B_{2R_0t}(tp_t))}.$
Then we have the following result.
\medskip

\noindent {\bf{Theorem D}}. \cite[Theorem 1.4, Section 5]{llty}  {\em Assume that \eqref{main assume} holds and $\rho\notin8\pi\mathbb{N}$.
Let $u_t$ be the sequence of blow up solutions of (\ref{equation-ut}) and $w+8\pi G(x,{0})$ be its limit in $M\setminus\{{0}\}.$ If $w$ is a non-degenerate solution of (\ref{thc.equationw}), then
\begin{enumerate}
  \item[(i)] $
       \|\t\phi_t\|_*=O(t\ln  t),$

 \item[(ii)] \begin{equation*}\begin{aligned}
              & \lambda_{t}+2\ln  t-\ln \left(\frac{\rho}{\rho-8\pi}\int_Mhe^{w}\right)+w(tp_{t})
        +2\ln  C_{t} +8\pi R(tp_t,tp_t)\\&=O(t\ln  t),\end{aligned}
             \end{equation*}
        \item[(iii)] $\rho_{t}-8\pi=O(t^2\ln  t),$
      \item[(iv)]     $
         \left|\int_Mhe^{u_t-G_t}{d}v_g-\frac{\rho}{\rho-8\pi}\int_Mhe^{w}{d}v_g\right|=O(t),
    $
    \item[(v)] $|p_t|=O(t)$.
\end{enumerate}
}
\medskip

\indent
In order to prove Theorem D, the authors in \cite{llty} analyzed the scaled function $v_t$  with the following ingredients: Set
\begin{align}
\label{s.11}
I_{t}(y)=\ln \frac{e^{\lambda_{t}}}{(1+C_{t}e^{\lambda_{t}}|y-q_{t}|^2)^2},
\end{align}
where $q_{t}$ is chosen such that $|q_{t}-p_{t}|\ll1$ and
\begin{align*}
\nabla_yI_{t}(y)\Big|_{y= p_{t}}=-t\rho_{t}\nabla_xR(x,tp_{t})\Big|_{x=tp_{t}} -t\nabla_x w(x)\Big|_{x=tp_{t}}.
\end{align*}
By direct computation, we have
\begin{align}
\label{s.12}
|q_{t}-p_{t}|=O(te^{-\lambda_t}).
\end{align}
For $y\in B_{2r_0}(p_{t}),$ we set
\begin{align}
\label{s.13}
\eta_{t}(y)=~&v_{t}(y)-I_{t}(y)-(G_{*,t}(ty)-G_{*,t}(tp_{t})),
\end{align}
where
\begin{align}\label{s.g}G_{*,t}(x)=\rho_{t}R(x,tp_{t}) +w(x).\end{align}
It is easy to see that
\begin{align}
\label{s.14}
\eta_{t}(p_{t})=v_{t}(p_{t})-I_{t}(p_{t})=O(t^2e^{-\lambda_t}),~\nabla\eta_{t}(p_{t})=0.
\end{align}
Let
 \begin{equation}\label{lambdatp}
   \Lambda_{t,+}=\sqrt{C_{t}}e^{\frac{\lambda_{t}}{2}},\ \ \textrm{and}\ \ \Lambda_{t,-}=(\Lambda_{t,+})^{-1}=\frac{e^{-\frac{\lambda_{t}}{2}}}{\sqrt{C_{t}}}
 \end{equation} and $\widetilde\eta_{t}$ be the scaled function of $\eta_{t}$, that is
\begin{align*}
\widetilde{\eta}_{t}(z)=\eta_{t}((\Lambda_{t,-} )z+p_{t})\ \ \ \mathrm{for}\ \ \ |z|\leq 2 R_0\Lambda_{t,+}.
\end{align*}
For $\widetilde{\eta}_{t}(z)$, we have the following estimate
\medskip

\noindent {\bf{Theorem E}}. \cite[Lemma 7.1]{llty}  {\em Suppose that the assumptions of Theorem  D hold. Then for any $\e\in(0,1)$, there exists a constant $C_\e>0$, independent of $t>0$ and $z\in B_{2 R_0\Lambda_{t,+}}(0)$ such that
$$|\widetilde{\eta}_{t}(z)|\leq C_{\e}(t\|\widetilde{\phi}_t\|_*+t^2)(1+|z|)^{\e}.$$
}

\section{Uniqueness of the blow up solutions with mass concentration}\label{sec_est}
To prove Theorem \ref{thm_nonconcen}  is equivalent to prove the local uniqueness of blow up solutions of the equation \eqref{equation-ut}. To show it, we argue by contradiction and suppose that \eqref{equation-ut} has two different blow up solutions $u_t^{(1)}$ and $u_t^{(2)}$, which satisfy $u_t^{(1)}, u_t^{(2)}\to w$ in $C_{\textrm{loc}}(M\setminus\{0\})$, where $w$ is a non-degenerate solution of \eqref{thc.equationw}. We will use $p_{t}^{(i)}$, $\lambda_t^{(i)}$, $\bar{u}_t^{(i)}$, $I_t^{(i)}$, $\widetilde{\phi}_t^{(i)}$, $C_t^{(i)}$,  $q_{t}^{(i)}$, $v_t^{(i)}$, $\rho_{t}^{(i)}$, $\eta_t^{(i)}$, $\widetilde{\eta}_t^{(i)}$, $G_{*,t}^{(i)}$, $\Lambda_{t,+}^{(i)}$, $\Lambda_{t,-}^{(i)}$  to denote  $p_{t}$, $\lambda_t$, $\bar{u}_t$, $I_t$, $\widetilde{\phi}_t$, $C_t$,  $q_{t}$, $v_t$, $\rho_{t}$, $\eta_t$, $\widetilde{\eta}_t$, $G_{*,t}$, $\Lambda_{t,+}$, $\Lambda_{t,-}$  in section 2 corresponding to $u_t^{(i)}$, $i=1,2$, respectively.

From Theorem D, we have  $|p_t^{(i)}|=O(t)$ for $i=1,2$. In the following lemma, we shall improve the estimation for  $|p_{t}^{(1)}-p_{t}^{(2)}|$.
 \begin{lemma}\label{lem_diff_p_t}
  $|p_t^{(1)}-p_t^{(2)}|=O\left(t^2\ln t\right)$.
\end{lemma}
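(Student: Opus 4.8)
The plan is to compare the two scaled equations near the blow-up points and extract the location difference from a Pohozaev-type identity. First I would record that, by Theorem~D(v), both $p_t^{(1)}$ and $p_t^{(2)}$ lie in $B_{Ct}(0)$, and by Theorem~D(ii) the two maximal values $\lambda_t^{(1)},\lambda_t^{(2)}$ agree up to $O(t\ln t)$ after subtracting the common quantities $w(tp_t^{(i)})$, $2\ln C_t^{(i)}$, $8\pi R(tp_t^{(i)},tp_t^{(i)})$; since all these terms depend on $tp_t^{(i)}\in B_{Ct^2}(0)$ only through $O(t^2)$ corrections, one gets $\lambda_t^{(1)}-\lambda_t^{(2)}=O(t\ln t)$ and likewise $C_t^{(1)}-C_t^{(2)}=O(t)$ and $\rho_t^{(1)}-\rho_t^{(2)}=O(t^2\ln t)$. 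The key device is the defining relation for $q_t^{(i)}$, namely
\begin{align*}
\nabla_y I_t^{(i)}(y)\Big|_{y=p_t^{(i)}}=-t\rho_t^{(i)}\nabla_x R(x,tp_t^{(i)})\Big|_{x=tp_t^{(i)}}-t\nabla_x w(x)\Big|_{x=tp_t^{(i)}},
\end{align*}
together with $\nabla I_t^{(i)}(q_t^{(i)})=0$ and $|q_t^{(i)}-p_t^{(i)}|=O(te^{-\lambda_t})$ from \eqref{s.12}. Since $I_t^{(i)}$ is the standard bubble centered at $q_t^{(i)}$, the left-hand side above is (up to the small shift) a smooth explicit function of $p_t^{(i)}-q_t^{(i)}$ whose derivative at $0$ is nondegenerate, of size $\Lambda_{t,+}^2$; reading the identity as a fixed-point/implicit-function relation then pins down $p_t^{(i)}-q_t^{(i)}$ and lets one transfer differences between the two sequences.

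The core of the argument is then a Pohozaev identity on a ball $B_{\delta}(0)$ (with $\delta$ a fixed small radius, independent of $t$) for each solution $\bar u_t^{(i)}$ of \eqref{eq115}, differentiated in the two coordinate directions. Testing \eqref{eq115} against $\partial_{x_k}\bar u_t^{(i)}$ and integrating over $B_\delta(0)$ produces the standard identity
\begin{align*}
\int_{B_\delta(0)}\partial_{x_k}\!\big(h_1(x)|x-t\underline e|^2|x+t\underline e|^2\big)e^{\bar u_t^{(i)}}\,dx=\textrm{(boundary terms on }\partial B_\delta(0)\textrm{)}.
\end{align*}
On $\partial B_\delta(0)$ the solution $\bar u_t^{(i)}$ is $C^1$-close to the regular limit $w-\ln\int_M h e^{w}-\bar\varphi+8\pi G(\cdot,0)$ with error controlled by Theorem~D(i) $\|\t\phi_t\|_*=O(t\ln t)$, so the difference of the boundary integrals for $i=1$ and $i=2$ is $O(t\ln t)$. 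Subtracting the two identities, the bulk term becomes $\int \partial_{x_k}(\cdots)\,(e^{\bar u_t^{(1)}}-e^{\bar u_t^{(2)}})\,dx$ plus the contribution of the explicit $t$-dependence of the weight; after rescaling $x=ty$ the mass concentrates in $B_{R}(p_t^{(i)})$ where $e^{\bar u_t^{(i)}}$ is comparable to the bubble, and the leading term reproduces $\nabla(\log\text{-weight})$ evaluated at the centers, i.e. a nonzero multiple of $p_t^{(1)}-p_t^{(2)}$ (using that $h_1$ is $C^3$ with $h_1(0)>0$ and $\nabla\psi(0)=\nabla\varphi(0)=0$) times the total mass $\approx 8\pi$. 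Matching orders gives $|p_t^{(1)}-p_t^{(2)}|\cdot\Lambda_{t,+}^{2}\cdot(\textrm{const})=O(t\ln t)\cdot(\textrm{something})$; combined with $\Lambda_{t,-}=O(e^{-\lambda_t/2})$ and the a priori estimate relating $e^{-\lambda_t}$ to $t^{2}$ (from Theorem~D(ii), $\lambda_t+2\ln t=O(1)$, so $e^{-\lambda_t}\asymp t^2$), this yields exactly $|p_t^{(1)}-p_t^{(2)}|=O(t^2\ln t)$.

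The main obstacle I anticipate is bookkeeping the error terms with the correct powers of $t$: one must use the \emph{sharp} estimates of Theorem~D (not merely Theorem~B) everywhere, in particular $\|\t\phi_t\|_*=O(t\ln t)$, $\rho_t-8\pi=O(t^2\ln t)$, and $\lambda_t+2\ln t=O(1)$, and track how each enters the rescaled Pohozaev identity; a crude bound loses the logarithm or a power of $t$. A secondary subtlety is that $q_t^{(i)}$ rather than $p_t^{(i)}$ is the genuine center of the bubble, so I would first prove the estimate for $|q_t^{(1)}-q_t^{(2)}|$ and then use \eqref{s.12} to pass back to $p_t^{(i)}$, noting $|q_t^{(i)}-p_t^{(i)}|=O(te^{-\lambda_t})=O(t^3)$ is negligible at the claimed order. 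Finally, to identify the leading coefficient as genuinely nonzero one uses that the bubble profile $\frac{1}{(1+|z|^2)^2}$ integrates against the linear functions $z_k$ to zero but against the gradient direction with a definite sign, which is exactly where the nondegeneracy of the inner problem (Theorem~E controlling $\widetilde\eta_t$) is invoked to justify replacing $v_t^{(i)}$ by $I_t^{(i)}$ in the concentration region with error $o(1)$ relative to the main term.
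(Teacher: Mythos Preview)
Your strategy — a translation Pohozaev identity applied to each solution and then subtracted — is exactly the mechanism the paper uses, but several points in your execution are off and would prevent the argument from closing as written.

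\medskip
\textbf{Domain.} The paper applies the Pohozaev identity to $v_t^{(i)}$ (the $y$-variable) on $B_{2R_0}(p_t^{(i)})$, a ball of \emph{fixed} radius in the rescaled variable; in the original $x$-variable this is $B_{2R_0t}(tp_t^{(i)})$, not a fixed ball $B_\delta(0)$. Your choice of $B_\delta(0)$ contains the whole non-concentrated region $B_\delta\setminus B_{2R_0t}(tp_t^{(i)})$, where the mass is \emph{not} negligible — this is precisely the novelty of the collapsing-singularity setting (Theorem~B). One can still make the fixed-$\delta$ approach work, but one must separately bound the contribution of the outer annulus; the phrase ``the mass concentrates in $B_R(p_t^{(i)})$'' is false here and glosses over a real estimate.

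\medskip
\textbf{Scaling balance.} This is the main gap. In the $y$-variable, the boundary terms on $\partial B_{2R_0}(p_t^{(i)})$ are evaluated via the expansion $v_t^{(i)}=-\frac{\rho_t^{(i)}}{2\pi}\ln|y-p_t^{(i)}|+G_{*,t}^{(i)}(ty)+\widetilde\phi_t^{(i)}(ty)+\cdots$, and the \emph{difference} of the two boundary integrals is $O(t^2\ln t)$, not $O(t\ln t)$. On the interior side, the leading term of each bulk integral is $\rho_t^{(i)}\,\xi\cdot\nabla\ln h_t(p_t^{(i)})$, and the second-order correction is handled by the change of variables $z=\Lambda_{t,+}^{(i)}(y-p_t^{(i)})$ together with Theorem~E, giving another $O(t^2\ln t)$. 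No factor of $\Lambda_{t,+}^2$ appears anywhere in the final balance, which is simply
\[
\nabla\ln h_t(p_t^{(1)})-\nabla\ln h_t(p_t^{(2)})=O(t^2\ln t).
\]
The relation $e^{\lambda_t}\asymp t^{-2}$ is used only inside the error analysis of the correction term, not in the leading balance. Your matching ``$|p_t^{(1)}-p_t^{(2)}|\cdot\Lambda_{t,+}^{2}\cdot\text{const}=O(t\ln t)\cdot(\text{something})$'' has the wrong powers on both sides and would yield $O(t^3\ln t)$ if taken literally.

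\medskip
\textbf{Nondegeneracy.} The invertibility that closes the argument is that of
\[
\nabla^2\!\left(\ln|y-\underline e|^{2}|y+\underline e|^{2}\right)\Big|_{y=0},
\]
whose determinant equals $-16$. The smooth factor $\rho h(ty)e^{\psi(ty)-R_t(ty)}$ depends on $y$ only through $ty$, so its contribution to $\nabla\ln h_t(p_t^{(1)})-\nabla\ln h_t(p_t^{(2)})$ is $O(t|p_t^{(1)}-p_t^{(2)}|)$ and is absorbed. The properties $h_1(0)>0$, $\nabla\psi(0)=0$ you cite only guarantee this lower-order behavior; they are not the source of invertibility.

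\medskip
\textbf{Minor point.} The paragraph on the defining relation for $q_t^{(i)}$ and an implicit-function argument is a red herring: the paper does not use it for this lemma, and $|q_t^{(i)}-p_t^{(i)}|=O(te^{-\lambda_t})=O(t^3)$ is far below the precision needed.
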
\begin{proof}
Recall that $v_t^{(i)}(y) =u_t^{(i)}(ty)-\ln \int_Mhe^{u_t^{(i)}-G_t}{d}v_g-\bar{\varphi}(ty) + 6\ln \ t$ satisfies
\begin{equation}
\label{s.9}
\Delta v_t^{(i)}+h_{t}(y)e^{v_t^{(i)}(y)}=0,
\end{equation}
where $
h_{t}(y)=\rho h(ty)|y-\underline{e}|^{2}|y+\underline{e}|^{2}e^{-R_t(ty)+\psi(ty)}.
$\\
On $\partial B_{2R_0}(p_t^{(i)})$, we have
\begin{equation}
\label{s.10}
\begin{aligned}
v_t^{(i)}(y)
&=-\frac{\rho_t^{(i)}}{2\pi}\ln |y-p_t^{(i)}|+\left(6-\frac{\rho_t^{(i)}}{2\pi}\right)\ln  t +G_{*,t}^{(i)}(ty)-\ln \int_Mhe^{u_t^{(i)}-G_t}{d}v_g+\widetilde\phi_t^{(i)}(ty)-\bar{\varphi}(ty),
\end{aligned}
\end{equation}
where $G_{*,t}^{(i)}(x)=\rho_t^{(i)}R(x,tp_t^{(i)}) +w(x).$\\
For any unit vector $\xi\in\mathbb{R}^2,$ we apply the Pohozaev identity to (\ref{s.9}) by multiplying $\xi\cdot\nabla v_t^{(i)}$, and obtain
\begin{equation}
\label{s.16}
\begin{aligned}
 \sum_{i=1}^2(-1)^{i+1}\int_{B_{2R_0}(p_t^{(i)})}(\xi\cdot\nabla h_t)e^{v_t^{(i)}(y)}
 &=\sum_{i=1}^2(-1)^{i+1}\int_{\p B_{2R_0}(p_t^{(i)})}\left\{(\nu\cdot\nabla v_t^{(i)})(\xi\cdot\nabla v_t^{(i)})-\frac12(\nu\cdot\xi)|\nabla v_t^{(i)}|^2\right\}\\
&+\sum_{i=1}^2(-1)^{i+1}\int_{\p B_{2R_0}(p_t^{(i)})}(\nu\cdot\xi)h_te^{v_t^{(i)}},
\end{aligned}
\end{equation}
where $\nu$ denotes the unit normal of $\partial B_{2R_0}(p_t^{(i)})$.
From \eqref{note}, we have
\begin{equation}\label{gradpsi}
  |\nabla_y\bar{\varphi}(ty)|=t|\nabla_{ty}\bar{\varphi}(ty)|=O( t^2|y|)\ \ \textrm{for}\ \ |y|\le \frac{r_0}{t}.
\end{equation} For the right hand side of (\ref{s.16}), we can use (\ref{s.10}), Theorem D, and Theorem E  to get
\begin{equation}
\label{s.17}
\begin{aligned}
 \mbox{(RHS) of \eqref{s.16}} &=\sum_{i=1}^2(-1)^{i+1}\int_{\p B_{2R_0}(p_t^{(i)})}\Big[(\nu\cdot\nabla v_t^{(i)})(\xi\cdot\nabla v_t^{(i)})-\frac12(\nu\cdot\xi)|\nabla v_t^{(i)}|^2{d}y +O(\sum_{i=1}^2e^{-\lambda_t^{(i)}})\Big]\\
&= \sum_{i=1}^2(-1)^{i}\left[t\rho_t^{(i)}\xi\cdot\nabla_xG_{*,t}^{(i)}(x)\Big|_{x=tp_t^{(i)}}+O(t\|\widetilde\phi_t^{(i)}\|_{*}+t^2)\right]
 =O(t^2\ln t).\end{aligned}
\end{equation}
For the left hand side of (\ref{s.16}), by using  Theorem D,  we get that
\begin{equation}\begin{aligned}
\label{3.014}
(LHS) &= \sum_{i=1}^2(-1)^{i+1}\int_{B_{2R_0}(p_t^{(i)} )}\left(\xi\cdot\frac{\nabla h_t( p_t^{(i)} )}{h_t( p_t^{(i)} )}\right)h_t( y)e^{{v}_t^{(i)} (y)}{d}y\\
&+\sum_{i=1}^2(-1)^{i+1}\int_{B_{2R_0}(p_t^{(i)} )}\xi\cdot\left(\frac{\nabla h_t( y)}{h_t(y)}-\frac{\nabla h_t(p_t^{(i)} )}{h_t(p_t^{(i)} )}\right)
h_t( y)e^{{v}_t^{(i)} (y)}{d}y
\\&=\sum_{i=1}^2(-1)^{i+1}\rho_t^{(1)} \left(\xi\cdot\frac{\nabla h_t( p_t^{(i)} )}{h_t( p_t^{(i)} )}\right)
\\&+\sum_{i=1}^2(-1)^{i+1}\int_{B_{2R_0}(p_t^{(i)} )}\xi\cdot\left(\frac{\nabla h_t( y)}{h_t(y)}-\frac{\nabla h_t(p_t^{(i)} )}{h_t(p_t^{(i)} )}\right)
\\&\times\frac{h_t( y)e^{\lambda_t^{(i)}+\eta_t^{(i)}+G_{*,t}^{(i)}(ty)-G_{*,t}^{(i)}(tp_t^{(i)})}}{(1+C_t^{(i)}e^{\lambda_t^{(i)}}|y-q_t^{(i)}|^2)^2}{d}y+O(t^2\ln t).
\end{aligned}\end{equation}
By the change of variable $z= \Lambda_{t,+}^{(i)} (y-p_t^{(i)})$ and $\int_{B_{2R_0\Lambda_{t,+}^{(i)}}(0)}\frac{z_k}{(1+|z|^2)^2}{d}z=0$ for $k=1,2$, we see that
\begin{equation}\begin{aligned}
\label{3.14}
&\int_{B_{2R_0}(p_t^{(i)} )}\xi\cdot\left(\frac{\nabla h_t( y)}{h_t(y)}-\frac{\nabla h_t(p_t^{(i)} )}{h_t(p_t^{(i)} )}\right)
 \frac{h_t( y)e^{\lambda_t^{(i)}+\eta_t^{(i)}+G_{*,t}^{(i)}(ty)-G_{*,t}^{(i)}(tp_t^{(i)})}}{(1+C_t^{(i)}e^{\lambda_t^{(i)}}|y-q_t^{(i)}|^2)^2}{d}y
\\&=\int_{B_{2R_0}(p_t^{(i)} )}\xi\cdot\left(\nabla \left(\frac{\nabla h_t(p_t^{(i)} )}{h_t(p_t^{(i)} )}\right)\cdot (y-p_t^{(i)})+O(|y-p_t^{(i)}|^2)\right)
 \frac{h_t( y)e^{\lambda_t^{(i)}+\eta_t^{(i)}+G_{*,t}^{(i)}(ty)-G_{*,t}^{(i)}(tp_t^{(i)})}}{(1+C_t^{(i)}e^{\lambda_t^{(i)}}|y-q_t^{(i)}|^2)^2}{d}y
\\&=\int_{B_{2R_0\Lambda_{t,+}^{(i)}}(0)}\xi\cdot\left(\nabla \left(\frac{\nabla h_t(p_t^{(i)} )}{h_t(p_t^{(i)} )}\right)\cdot (\Lambda_{t,-}^{(i)})z+O(t^2|z|^2)\right)
 \frac{h_t( (\Lambda_{t,-}^{(i)})z+p_t^{(i)})e^{\widetilde{\eta}_t^{(i)}+G_{*,t}^{(i)}(t(\Lambda_{t,-}^{(i)})z+tp_t^{(i)})-G_{*,t}^{(i)}(tp_t^{(i)})}}{C_t^{(i)}(1+|
z+\Lambda_{t,+}^{(i)}(p_t^{(i)}-q_t^{(i)})|^2)^2}{d}z
\\&=\int_{B_{2R_0\Lambda_{t,+}^{(i)}}(0)}\xi\cdot\left(\nabla \left(\frac{\nabla h_t(p_t^{(i)} )}{h_t(p_t^{(i)} )}\right)\cdot (\Lambda_{t,-}^{(i)})z+O(t^2|z|^2)\right)
 \frac{h_t(p_t^{(i)})(1+O(t|z|)+ O(|\widetilde{\eta}_t^{(i)}|)+ O(t^2))}{C_t^{(i)}(1+|z|^2)^2}{d}z
\\&=O(t^2\ln t)\ \  \textrm{for}\ \ i=1,2,
\end{aligned}\end{equation}here we used Theorem E in the last line. \\
From \eqref{s.17}-\eqref{3.14},    we have
\begin{equation}\begin{aligned}
\label{3.141}
 \frac{\nabla h_t ( p_t^{(1)} )}{ h_t ( p_t^{(1)} )}-\frac{\nabla h_t ( p_t^{(2)} )}{ h_t ( p_t^{(2)} )} =O(t^2\ln t).
\end{aligned}\end{equation}
By using  the expression \eqref{eq118} of $h_t$,  we see that \begin{equation}\begin{aligned}&\frac{ \nabla  h_t (p_t^{(1)} ) }{ h_t (p_t^{(1)} )}-\frac{ \nabla  h_t (p_t^{(2)} ) }{ h_t (p_t^{(2)} )}\\&=\nabla (\ln|y-\underline{e}|^2|y+\underline{e}|^2)|_{y=p_t^{(1)}}-\nabla(\ln|y-\underline{e}|^2|y+\underline{e}|^2)|_{y=p_t^{(2)}} +O(t|p_t^{(1)}-p_t^{(2)}|).\label{est_ht} \end{aligned}\end{equation}Note that $|p_t^{(1)}-p_t^{(2)}|=O(t)$  from Theorem D, and $\nabla^2 (\ln|y-\underline{e}|^2|y+\underline{e}|^2)|_{y=0}$ is invertible.  So \eqref{3.141} and \eqref{est_ht} yield  that   $|p_t^{(1)}-p_t^{(2)} |=O(t^2\ln t)$, and thus we complete the proof of Lemma \ref{lem_diff_p_t}.
\end{proof}
Now we are going to estimate $\|{u}_t^{(1)}-{u}_t^{(2)}\|_{L^\infty(M)}$.
 \begin{lemma}\label{lem_diff_tilu}
  \[\|{u}_t^{(1)}-{u}_t^{(2)}\|_{L^\infty(M)}=O(t\ln t).\]
\end{lemma}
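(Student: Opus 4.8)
The plan is to control $\|u_t^{(1)}-u_t^{(2)}\|_{L^\infty(M)}$ by splitting $M$ into the ``outer'' region $M\setminus B_{2R_0t}(tp_t^{(1)})$ and the ``inner'' region near the collapsing singularity. In the outer region, by Theorem D(i) the function $\widetilde\phi_t^{(i)}=u_t^{(i)}-w-\rho_t^{(i)}G(\cdot,tp_t^{(i)})$ satisfies $\|\widetilde\phi_t^{(i)}\|_*=O(t\ln t)$, so the difference $u_t^{(1)}-u_t^{(2)}$ equals $(\rho_t^{(1)}-\rho_t^{(2)})G(\cdot,tp_t^{(1)})+\rho_t^{(2)}(G(\cdot,tp_t^{(1)})-G(\cdot,tp_t^{(2)}))+(\widetilde\phi_t^{(1)}-\widetilde\phi_t^{(2)})$ there. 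By Theorem D(iii) we have $\rho_t^{(i)}-8\pi=O(t^2\ln t)$, hence $\rho_t^{(1)}-\rho_t^{(2)}=O(t^2\ln t)$; by Lemma \ref{lem_diff_p_t} we have $|tp_t^{(1)}-tp_t^{(2)}|=O(t^3\ln t)$, so the Green's-function difference is $O(t^2\ln t)$ away from the singularity; and $\|\widetilde\phi_t^{(1)}-\widetilde\phi_t^{(2)}\|_*=O(t\ln t)$. This gives $O(t\ln t)$ on $M\setminus B_{2R_0t}(tp_t^{(1)})$, and in particular on $\partial B_{2R_0t}(tp_t^{(1)})$, which serves as boundary data for the inner estimate.

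For the inner region I would pass to the scaled variable. On $B_{2R_0}(p_t^{(i)})$ the rescaled solution $v_t^{(i)}$ decomposes, via \eqref{s.13}, as $v_t^{(i)}=I_t^{(i)}+(G_{*,t}^{(i)}(t\cdot)-G_{*,t}^{(i)}(tp_t^{(i)}))+\eta_t^{(i)}$. I would estimate the difference $v_t^{(1)}-v_t^{(2)}$ term by term. First, $G_{*,t}^{(i)}(x)=\rho_t^{(i)}R(x,tp_t^{(i)})+w(x)$, and $R$ is smooth, so the Green's-regular-part pieces differ by $O(\rho_t^{(1)}-\rho_t^{(2)})+O(t|p_t^{(1)}-p_t^{(2)}|)=O(t^2\ln t)$ after multiplying by the $O(t)$ scale of the argument; the $\eta_t^{(i)}$ pieces are controlled by Theorem E, giving $|\widetilde\eta_t^{(i)}|=O(t\|\widetilde\phi_t^{(i)}\|_*+t^2)=O(t^2\ln t)$ on the relevant range. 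The main term is the bubble difference $I_t^{(1)}-I_t^{(2)}$, governed by the three parameters $\lambda_t^{(i)}$, $C_t^{(i)}$, $q_t^{(i)}$; using Theorem D(ii) to express $\lambda_t^{(i)}+2\ln t$ in terms of $w(tp_t^{(i)})$, $C_t^{(i)}$ and $R(tp_t^{(i)},tp_t^{(i)})$ modulo $O(t\ln t)$, together with Lemma \ref{lem_diff_p_t}, the definition \eqref{1.8} of $C_t^{(i)}$ (which depends Lipschitz-continuously on $p_t^{(i)}$ through $h_1$ and the factors $|p_t^{(i)}\mp\underline e|^2$), and \eqref{s.12} for $|q_t^{(i)}-p_t^{(i)}|$, one finds $\lambda_t^{(1)}-\lambda_t^{(2)}=O(t\ln t)$ and the remaining parameter differences are $O(t^2\ln t)$, so $|I_t^{(1)}(y)-I_t^{(2)}(y)|=O(t\ln t)$ uniformly on $B_{2R_0}(p_t^{(1)})$. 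Undoing the scaling $v_t^{(i)}(y)=\bar u_t^{(i)}(ty)+6\ln t$ and noting $\bar u_t^{(i)}(x)=u_t^{(i)}(x)-\ln\int_Mhe^{u_t^{(i)}-G_t}dv_g-\bar\varphi(x)$, the additive constant $\ln\int_Mhe^{u_t^{(i)}-G_t}dv_g$ differs by $O(t)$ by Theorem D(iv), so $\|u_t^{(1)}-u_t^{(2)}\|_{L^\infty(B_{2R_0t}(tp_t^{(1)}))}=O(t\ln t)$ as well.

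Combining the outer and inner estimates yields $\|u_t^{(1)}-u_t^{(2)}\|_{L^\infty(M)}=O(t\ln t)$. The main obstacle is the bookkeeping for the bubble term: one must verify carefully that all the dependences of $\lambda_t^{(i)}$, $C_t^{(i)}$, $q_t^{(i)}$ on $p_t^{(i)}$ (and on the solution-dependent constant $\int_Mhe^{u_t^{(i)}-G_t}dv_g$) are Lipschitz, so that the already-proven estimates $|p_t^{(1)}-p_t^{(2)}|=O(t^2\ln t)$ and $|\rho_t^{(1)}-\rho_t^{(2)}|=O(t^2\ln t)$ propagate to the right orders, and that the only genuinely $O(t\ln t)$ (rather than $O(t^2\ln t)$) contribution comes from the ambiguity in $\lambda_t^{(i)}$ allowed by Theorem D(ii). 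A secondary technical point is ensuring the matching of the inner and outer expansions on the annulus $\partial B_{2R_0t}(tp_t^{(1)})$ is consistent, i.e. that the boundary data used for the inner problem agrees with the outer estimate to order $O(t\ln t)$.
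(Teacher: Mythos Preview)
Your proposal is correct and follows essentially the same approach as the paper: split $M$ into outer and inner regions, use Theorem~D(i),(iii) plus Lemma~\ref{lem_diff_p_t} to control the Green's-function terms in the outer region, and in the inner region decompose $v_t^{(i)}=I_t^{(i)}+(G_{*,t}^{(i)}(t\cdot)-G_{*,t}^{(i)}(tp_t^{(i)}))+\eta_t^{(i)}$, using Theorem~D(ii),(iv), the Lipschitz dependence of $C_t^{(i)}$ on $p_t^{(i)}$, \eqref{s.12}, and Theorem~E. One small slip: Theorem~E gives $|\widetilde\eta_t^{(i)}(z)|\le C_\varepsilon(t\|\widetilde\phi_t^{(i)}\|_*+t^2)(1+|z|)^\varepsilon$, and since $|z|$ ranges up to $2R_0\Lambda_{t,+}^{(i)}\sim t^{-1}$ this yields $O(t^{2-\varepsilon}\ln t)$ rather than $O(t^2\ln t)$; but this is still $O(t\ln t)$, so the conclusion is unaffected. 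Your remark about ``boundary data'' and ``matching on the annulus'' is unnecessary: both regions are estimated directly and independently, and the $L^\infty$ bound follows simply by taking the maximum.
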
\begin{proof}We note that $M\setminus   B_{2R_0t}(tp_t^{(2)})\subseteq M\setminus   B_{2R_1t}(tp_t^{(1)})$ for some $R_1>0$. For $x\in M\setminus   B_{2R_0t}(tp_t^{(1)})$, we see from Theorem D that \begin{equation}\begin{aligned}\label{0green_rep}
                   {u}_t^{(1)}(x)-{u}_t^{(2)}(x)
                   &= (w(x)+\rho_t^{(1)}G(x,tp_t^{(1)})+\widetilde{\phi}_t^{(1)}(x) )- (w(x)+\rho_t^{(2)}G(x,tp_t^{(2)})+\widetilde{\phi}_t^{(2)}(x) )
                  \\& =\rho_{t}^{(1)}G(x,tp_t^{(1)})-\rho_{t}^{(2)}G(x,tp_t^{(2)})+O(t\ln t).
                \end{aligned}
\end{equation}
Together with Theorem D and Theorem E, we have for some $\theta\in(0,1)$,
\begin{equation}\begin{aligned}\label{green_rep}
                    {u}_t^{(1)}(x)-{u}_t^{(2)}(x)
                   & =-\frac{\rho_{t}^{(1)}}{2\pi} (\ln|x-tp_t^{(1)}|-\ln |x- tp_t^{(2)}|)+O(t\ln t)
                   \\&= \frac{O(t|p_t^{(1)}-p_t^{(2)}|)}{\theta |x- tp_t^{(1)}|+(1-\theta) |x- tp_t^{(2)}|}+O(t\ln t)
                   \\&=O(|p_t^{(1)}-p_t^{(2)}|)+O(t\ln t)=O(t\ln t)\ \ \textrm{for}\ \ x\in M\setminus B_{2R_0t}(tp_t^{(1)}).
                \end{aligned}
\end{equation}

We note that $   B_{2R_0t}(tp_t^{(2)})\subseteq   B_{2R_2t}(tp_t^{(1)})$ for some $R_2>0$. For $y\in   B_{2R_0}(p_t^{(1)})$, we see that \begin{equation}\begin{aligned}\label{0green_rep_eta}
                 \eta_t^{(1)}(y)-\eta_t^{(2)}(y)
                    &=\Big({v}_t^{(1)}(y)-I_t^{(1)}(y)-(G_{*,t}^{(1)}(ty)-G_{*,t}^{(1)}(tp_t^{(1)}))\Big)
                   \\&-\Big({v}_t^{(2)}(y)-I_t^{(2)}(y)-(G_{*,t}^{(2)}(ty)-G_{*,t}^{(2)}(tp_t^{(2)}))\Big)
                  \\& =u_t^{(1)}(ty)-\ln\int_Mhe^{u_t^{(1)}-G_t}{d}v_g-\lambda_t^{(1)}+2\ln(1+C_t^{(1)}e^{\lambda_t^{(1)}}|y-q_t^{(1)}|^2)
                  \\&-\Big(u_t^{(2)}(ty)-\ln\int_Mhe^{u_t^{(2)}-G_t}{d}v_g-\lambda_t^{(2)}+2\ln(1+C_t^{(2)}e^{\lambda_t^{(2)}}|y-q_t^{(2)}|^2)\Big)
                 +O(t).
                \end{aligned}
\end{equation}
By Theorem D, we have \begin{equation}\begin{aligned}\label{diffintM}\int_Mhe^{u_t^{(1)}-G_t}{d}v_g-\int_Mhe^{u_t^{(2)}-G_t}{d}v_g &= O(t),
     \end{aligned}   \end{equation}
\begin{equation}\begin{aligned}\label{diff1}&\lambda_t^{(i)}+2\ln t+2\ln C_t^{(i)}+8\pi R(tp_t^{(i)},tp_t^{(i)})-\ln \Big(\frac{\rho}{\rho-8\pi}\int_Mhe^{w}\Big)+w(tp_t^{(i)}) = O(t\ln t),
     \end{aligned}   \end{equation}and
     \begin{equation}\begin{aligned}\label{diff2}C_t^{(1)}-C_t^{(2)}&=\frac{\rho h (tp_t^{(1)})|p_t^{(1)}-\underline{e}|^2|p_t^{(1)}+\underline{e}|^2e^{-R_t(tp_t^{(1)})+\psi(tp_t^{(1)})}}{8}
     \\&-\frac{\rho h (tp_t^{(2)})|p_t^{(2)}-\underline{e}|^2|p_t^{(2)}+\underline{e}|^2e^{-R_t(tp_t^{(2)})+\psi(tp_t^{(2)})}}{8}
     \\&=O(|p_t^{(1)}-p_t^{(2)}|)=O(t^2\ln t),
     \end{aligned}   \end{equation}which imply  \begin{equation}\label{diff_lam}\lambda_t^{(1)}-\lambda_t^{(2)}=O(t\ln t).\end{equation}
   For $y\in B_{2R_0}(p_t^{(1)})$, we want to estimate
\[    2\ln(1+C_t^{(1)}e^{\lambda_t^{(1)}}|y-q_t^{(1)}|^2)-2\ln(1+C_t^{(2)}e^{\lambda_t^{(2)}}|y-q_t^{(2)}|^2).\]
In view of \eqref{s.12} and Lemma \ref{lem_diff_p_t}, we have    \begin{equation}\label{diff_qt}
     |q_t^{(1)}-q_t^{(2)}|\le \sum_{i=1}^2|p_t^{(i)}-q_t^{(i)}|+|p_t^{(1)}-p_t^{(2)}|= O(t^2\ln t).
   \end{equation}
   Let $y=q_t^{(1)}+\Lambda_{t,-}^{(1)}z$. Then we have for $y\in B_{2R_0}(p_t^{(1)})$,
      \begin{equation}\begin{aligned}\label{diff_ln}
      &2\ln(1+C_t^{(1)}e^{\lambda_t^{(1)}}|y-q_t^{(1)}|^2)-2\ln(1+C_t^{(2)}e^{\lambda_t^{(2)}}|y-q_t^{(2)}|^2)
    \\&=2\ln (1+|z|^2) -2\ln(1+\frac{C_t^{(2)}e^{\lambda_t^{(2)}}}{C_t^{(1)}e^{\lambda_t^{(1)}}}|\Lambda_{t,+}^{(1)}(y-q_t^{(1)})+\Lambda_{t,+}^{(1)}(q_t^{(1)}-q_t^{(2)})|^2)
    \\&=2\ln(1+|z|^2)-2\ln (1+(1+O(t\ln t))|z+O(t\ln t)|^2)=O(t\ln t).
   \end{aligned}\end{equation}
By Theorem E, we have
\begin{equation}\begin{aligned}\label{drhon8pi}
   &\eta_{t}^{(1)}(y)-\eta_{t}^{(2)}(y) =
   O(t\ln t)\ \ \textrm{for}\ \ y\in B_{2R_0}(p_t^{(1)}).
\end{aligned}\end{equation}
From \eqref{0green_rep_eta}-\eqref{drhon8pi}, we have  \begin{equation}\begin{aligned}\label{diff_ut_br}
   &u_{t}^{(1)}(x)-u_{t}^{(2)}(x) =
   O(t\ln t)\ \ \ \textrm{for}\ \ \ \ x\in B_{2R_0t}(tp_t^{(1)}).
\end{aligned}\end{equation}
By \eqref{green_rep} and \eqref{diff_ut_br}, we  complete the proof of Lemma \ref{lem_diff_tilu}.
\end{proof}

Let \begin{equation}\label{def_zeta}\zeta_t(x)=\frac{u_t^{(1)}(x)-u_t^{(2)}(x)}{\|u_t^{(1)}- u_t^{(2)}\|_{L^{\infty}(M)}},\end{equation}
                      and \begin{equation}\label{zetanj}
  \widetilde{\zeta}_{t}(z)=\zeta_t(t\Lambda_{t,-}^{(1)}z+tp_{t}^{(1)})-\frac{ \int_Mhe^{u_t^{(1)}-G_t}\zeta_t{d}v_g}{\int_Mhe^{u_t^{(1)}-G_t}{d}v_g}.
\end{equation}
Now we have the following estimation for the scaled function $\widetilde{\zeta}_{t}$.
\begin{lemma}
  \label{lem_est_zetanj} There are constants $b_{0}$, $b_{1}$, and $b_{2}$ satisfying
\begin{equation*}
  \widetilde{\zeta}_{t}(z)\to \widetilde{\zeta}_0(z)=b_{0}Y_0(z)+b_{1}Y_1(z)+b_{2}Y_2(z)\quad\textrm{in}\ \ C^0_{\textrm{loc}}(\mathbb{R}^2),
\end{equation*}where
$
  Y_0(z)=\frac{1-  |z|^2}{1+ |z|^2},\ \   Y_1(z)=\frac{ z_1}{1+ |z|^2},\ \    Y_2(z)=\frac{ z_2}{1+ |z|^2}.
$
\end{lemma}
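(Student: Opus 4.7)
The plan is to derive the linearized equation satisfied by $\widetilde{\zeta}_t$ on the rescaled domain, pass to the limit, and invoke the classification of bounded kernel elements recalled in \eqref{z01}. First, I would obtain the equation for $\zeta_t$ on $M$. Writing $u_t^{(s)} = s u_t^{(1)} + (1-s) u_t^{(2)}$, $A_s = \int_M h e^{u_t^{(s)} - G_t}\, dv_g$, and applying the fundamental theorem of calculus to the difference of the two copies of \eqref{equation-ut} satisfied by $u_t^{(1)}$ and $u_t^{(2)}$, one obtains
\begin{equation*}
\Delta_M \zeta_t + \rho \int_0^1 \frac{h e^{u_t^{(s)} - G_t}}{A_s}\Big(\zeta_t - \bar\zeta_t^{(s)}\Big)\, ds = 0, \qquad \bar\zeta_t^{(s)} := \frac{\int_M h e^{u_t^{(s)} - G_t} \zeta_t\, dv_g}{A_s}.
\end{equation*}
Changing variables to $x = t(p_t^{(1)} + \Lambda_{t,-}^{(1)} z)$ and using the isothermal identity $\Delta_x = e^{2\varphi}\Delta_M$ from \eqref{def-psi0}, the same manipulations that led from \eqref{equation-ut} to \eqref{eq117}--\eqref{eq118}, together with the identity $h_t(p_t^{(1)}) = 8 C_t^{(1)}$ coming from \eqref{eq118} and \eqref{1.8}, give
\begin{equation*}
(t \Lambda_{t,-}^{(1)})^2 e^{2\varphi(x)} \cdot \rho \frac{h(x) e^{u_t^{(s)}(x) - G_t(x)}}{A_s}\bigg|_{x=t(p_t^{(1)}+\Lambda_{t,-}^{(1)} z)} \longrightarrow \frac{8}{(1+|z|^2)^2}
\end{equation*}
uniformly for $s \in [0,1]$ on compact subsets of $\mathbb{R}^2$. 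The uniformity in $s$ is the first technical point: combining Lemma \ref{lem_diff_p_t}, Lemma \ref{lem_diff_tilu}, Theorem D, Theorem E, and \eqref{s.12}, the bubble profile of the convex combination $v_t^{(s)}(y) = s v_t^{(1)}(y) + (1-s) v_t^{(2)}(y) + O(t)$ evaluated at $y = p_t^{(1)}+\Lambda_{t,-}^{(1)} z$ agrees, up to $o(1)$, with the standard profile $\ln\tfrac{8}{(1+|z|^2)^2}$.

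Next, I would handle the weighted averages. A direct expansion in $s$, combined with $\|u_t^{(1)}-u_t^{(2)}\|_{L^\infty(M)}=O(t\ln t)$ from Lemma \ref{lem_diff_tilu}, yields $\bar\zeta_t^{(s)}-c_t = O(t\ln t)$ uniformly in $s\in[0,1]$, where $c_t := \bar\zeta_t^{(1)}$ is precisely the constant subtracted in \eqref{zetanj}. Writing $\zeta_t(x)-\bar\zeta_t^{(s)} = \widetilde{\zeta}_t(z)+(c_t-\bar\zeta_t^{(s)})$ at $x=t(p_t^{(1)}+\Lambda_{t,-}^{(1)} z)$ and inserting the previous estimate gives the rescaled equation
\begin{equation*}
\Delta_z \widetilde{\zeta}_t(z) + \frac{8}{(1+|z|^2)^2}\widetilde{\zeta}_t(z) = o(1) \qquad \text{in } C^0_{\textrm{loc}}(\mathbb{R}^2).
\end{equation*}
Since $\|\zeta_t\|_{L^\infty(M)}=1$ and $|c_t|\le 1$, we have $|\widetilde{\zeta}_t(z)|\le 2$ on the domain $|z|\le 2R_0\Lambda_{t,+}^{(1)}$, which exhausts $\mathbb{R}^2$ as $t\to 0$. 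Standard $L^p$ and $C^{1,\alpha}$ elliptic estimates, together with Arzela--Ascoli, then produce a subsequence converging locally uniformly to a bounded $\widetilde{\zeta}_0\in C^2(\mathbb{R}^2)$ satisfying $L\widetilde{\zeta}_0=0$ on all of $\mathbb{R}^2$. The classification \cite[Proposition 1]{bp} recalled in \eqref{z01} then forces $\widetilde{\zeta}_0 = b_0 Y_0+b_1 Y_1+b_2 Y_2$ for some constants $b_0, b_1, b_2$.

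The main obstacle is the uniform convergence of the linearization coefficient asserted above: a priori the two bubbling sequences have distinct centers $p_t^{(1)}\ne p_t^{(2)}$ and distinct scales $\Lambda_{t,+}^{(1)}\ne\Lambda_{t,+}^{(2)}$, so the convex combination $v_t^{(s)}$ must be realigned with the bubble profile of $u_t^{(1)}$ on the rescaled ball $\{|z|\le R\}$. It is precisely the sharp separation estimates $|p_t^{(1)}-p_t^{(2)}|=O(t^2\ln t)$ and $\|u_t^{(1)}-u_t^{(2)}\|_{L^\infty}=O(t\ln t)$ from Lemmas \ref{lem_diff_p_t} and \ref{lem_diff_tilu}, together with the control $|q_t^{(i)}-p_t^{(i)}|=O(te^{-\lambda_t^{(i)}})$ in \eqref{s.12}, that reduce all such differences to $o(1)$ on any fixed compact subset of $\mathbb{R}^2$ and thereby single out the limiting equation.
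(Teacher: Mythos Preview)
Your proposal is correct and follows the same strategy as the paper: derive the rescaled linearized equation for $\widetilde{\zeta}_t$, pass to the limit via uniform boundedness and elliptic regularity, and apply the Baraket--Pacard classification. The only difference is cosmetic---you linearize via the integral form with the convex combination $u_t^{(s)}$, while the paper Taylor-expands $e^{u_t^{(2)}-u_t^{(1)}}$ and the ratio of integrals directly in \eqref{eq_zeta} to obtain the explicit right-hand side \eqref{eq_zeta_final} (which it later reuses in Lemma~\ref{lem_intzetathree}).
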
    \begin{proof}
First, we see that
 \begin{equation}\begin{aligned}\label{eq_zeta}
0 &=\Delta_M \zeta_t+\frac{1 }{\|u_t^{(1)}- u_t^{(2)}\|_{L^{\infty}(M)}}\left(\frac{\rho h(x)e^{u_t^{(1)}(x)-G_t(x)}}{\int_Mhe^{u_t^{(1)}-G_t}{d}v_g} -\frac{\rho h(x)e^{u_t^{(2)}(x)-G_t(x)}}{\int_Mhe^{u_t^{(2)}-G_t}{d}v_g}\right)
\\&=\Delta_M \zeta_t+\frac{\rho h(x)e^{u_t^{(1)}(x)-G_t(x)}}{\|u_t^{(1)}- u_t^{(2)}\|_{L^{\infty}(M)}\int_Mhe^{u_t^{(1)}-G_t}{d}v_g}\left(1
-\frac{ e^{u_t^{(2)}(x)-u_t^{(1)}(x)}\int_Mhe^{u_t^{(1)}-G_t}}{\int_Mhe^{u_t^{(2)}-G_t}{d}v_g}\right)
\\&=\Delta_M \zeta_t+\frac{\rho h(x)e^{u_t^{(1)}(x)-G_t(x)}}{\|u_t^{(1)}- u_t^{(2)}\|_{L^{\infty}(M)}\int_Mhe^{u_t^{(1)}-G_t}{d}v_g}\Bigg(1
\\&-\frac{ (1+ u_t^{(2)}(x)-u_t^{(1)}(x)+O(\|u_t^{(1)}- u_t^{(2)}\|_{L^{\infty}(M)}^2))\int_Mhe^{u_t^{(1)}-G_t}{d}v_g}{\int_Mhe^{u_t^{(1)}-G_t}(1+ u_t^{(2)}(x)-u_t^{(1)}(x)+O(\|u_t^{(1)}- u_t^{(2)}\|_{L^{\infty}(M)}^2)){d}v_g}\Bigg)
\\&=\Delta_M \zeta_t+\frac{\rho h(x)e^{u_t^{(1)}(x)-G_t(x)}}{ \int_Mhe^{u_t^{(1)}-G_t}{d}v_g}\Bigg(\zeta_t
 -\frac{ \int_Mhe^{u_t^{(1)}-G_t}\zeta_t{d}v_g}{\int_Mhe^{u_t^{(1)}-G_t}{d}v_g} +O(\|u_t^{(1)}- u_t^{(2)}\|_{L^{\infty}(M)}) \Bigg).\end{aligned}
\end{equation}
By using the change of variables $y=t\Lambda_{t,-}^{(1)}z+tp_{t}^{(1)}$, \eqref{1-vt}, \eqref{s.13}, we have
 \begin{equation}\begin{aligned}\label{eq_zeta1}
 \Delta_z \widetilde{\zeta}_t(z) &=-\frac{t^6(\Lambda_{t,-}^{(1)})^2\rho h(t\Lambda_{t,-}^{(1)}z+tp_{t}^{(1)})e^{u_t^{(1)}(t\Lambda_{t,-}^{(1)}z+tp_{t}^{(1)})-R_t(t\Lambda_{t,-}^{(1)}z+tp_{t}^{(1)})}}{ \int_Mhe^{u_t^{(1)}-G_t}{d}v_g}\\&\times \left|\Lambda_{t,-}^{(1)}z+p_{t}^{(1)}-\underline{e}\right|^2\left|\Lambda_{t,-}^{(1)}z+p_{t}^{(1)}+\underline{e}\right|^2 (\widetilde{\zeta}_t(z) +O(\|u_t^{(1)}- u_t^{(2)}\|_{L^{\infty}(M)})  )
 \\&=-(\Lambda_{t,-}^{(1)})^2
 h_1(t\Lambda_{t,-}^{(1)}z+tp_{t}^{(1)})e^{{v}_t^{(1)}(\Lambda_{t,-}^{(1)}z+p_{t}^{(1)})}  \\&\times \left|\Lambda_{t,-}^{(1)}z+p_{t}^{(1)}-\underline{e}\right|^2\left|\Lambda_{t,-}^{(1)}z+p_{t}^{(1)}+\underline{e}\right|^2 (\widetilde{\zeta}_t(z) +O(\|u_t^{(1)}- u_t^{(2)}\|_{L^{\infty}(M)})  )
 \\&=-\left(\frac{1}{C_t^{(1)}}\right)
  \frac{h_1(t\Lambda_{t,-}^{(1)}z+tp_{t}^{(1)})e^{G_{*,t}^{(1)}(t\Lambda_{t,-}^{(1)}z+tp_{t}^{(1)})-G_{*,t}^{(1)}(tp_t^{(1)})+\widetilde\eta_t^{(1)}
  (z)}}{(1+|z+\Lambda_{t,+}^{(1)}(p_{t}^{(1)}-q_t^{(1)})|^2)^2} \\&\times \left|\Lambda_{t,-}^{(1)}z+p_{t}^{(1)}-\underline{e}\right|^2\left|\Lambda_{t,-}^{(1)}z+p_{t}^{(1)}+\underline{e}\right|^2 (\widetilde{\zeta}_t(z) +O(\|u_t^{(1)}- u_t^{(2)}\|_{L^{\infty}(M)})  )
  \\&=\frac{-8h_1(t\Lambda_{t,-}^{(1)}z+tp_{t}^{(1)})}{h_1(tp_t^{(1)}) }  \frac{ \left|\Lambda_{t,-}^{(1)}z+p_{t}^{(1)}-\underline{e}\right|^2\left|\Lambda_{t,-}^{(1)}z+p_{t}^{(1)}+\underline{e}\right|^2}{|p_t^{(1)}-\underline{e}|^2|p_t^{(1)}+\underline{e}|^2} \\&\times \frac{  (\widetilde{\zeta}_t(z) +O(\|u_t^{(1)}- u_t^{(2)}\|_{L^{\infty}(M)})  )(1+O(|\widetilde\eta_t(z)|)+O(t^2|z|))}{(1+|z+\Lambda_{t,+}^{(1)}(p_{t}^{(1)}-q_t^{(1)})|^2)^2}.\end{aligned}
\end{equation}        Together with \eqref{s.12}, Lemma \ref{lem_diff_tilu}, and Theorem E,  we have for  $z\in B_{ 2\Lambda_{t,+}R_0}(0)$,
                   \begin{equation}\begin{aligned}\label{eq_zeta_final}
 &\Delta_z \widetilde{\zeta}_t(z)+\frac{8 \widetilde{\zeta}_t(z)}{(1+|z|^2)^2 } =-\frac{8\widetilde{\zeta}_t \nabla\ln H_t(p_{t}^{(1)})\cdot (\Lambda_{t,-}^{(1)} z)+O(t\ln t)
 +O(t^2|z|^2)}{(1+|z|^2)^2 },\end{aligned}
\end{equation}where \begin{equation}\label{htdef}
                     H_t(x)=h_1(tx)|x-\underline{e}|^2|x+\underline{e}|^2.
                   \end{equation}
Since $\widetilde{\zeta}_t$ is uniformly bounded, there is a function $\t\zeta_0$ such that $\widetilde{\zeta}_t\to\widetilde{\zeta}_0$ in $C_{\textrm{loc}}(\mathbb{R}^2)$, where
\begin{equation}
\label{zeta0eq}
\left\{\begin{array}{l}
 \Delta \widetilde{\zeta}_0+\frac{8\widetilde{\zeta}_0}{(1+|z|^2)^2}=0\ \ \textrm{in}\ \ \mathbb{R}^2,\\
\|\widetilde{\zeta}_0\|_{L^\infty(\mathbb{R}^2)}\le c\ \ \textrm{for some constant}\ \ c>0.
\end{array}\right.
\end{equation}
 By \cite[Proposition 1]{bp}, we see that    $\widetilde{\zeta}_0(z)=\sum_{i=0}^2b_{i}Y_i(z)$ for some constants $b_i\in \mathbb{R}^2$, $i=0,1,2$. This completes the proof of Lemma \ref{lem_est_zetanj}.\end{proof}
In the following lemma, we observe the behavior of $\zeta_t$ in $M\setminus\{0\}$.
\begin{lemma}\label{lem_equalb0}
 (i)   $\zeta_t\to 0$ in $ C^{0}_{\textrm{loc}}(M\setminus\{0\})$,

 (ii) $\lim_{t\to0}\left(\int_M he^{u_t^{(1)}-G_t}\zeta_t{d}v_g\right)=0.$
 \end{lemma}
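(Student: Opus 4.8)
\textbf{Proof proposal for Lemma \ref{lem_equalb0}.}

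The plan is to prove the two statements essentially in sequence, with (i) feeding into (ii). For part (i), I would start from the decomposition already available on $M\setminus B_{2R_0t}(tp_t^{(1)})$, namely
$u_t^{(i)}(x)=w(x)+\rho_t^{(i)}G(x,tp_t^{(i)})+\widetilde\phi_t^{(i)}(x)$, so that
$u_t^{(1)}(x)-u_t^{(2)}(x)=\rho_t^{(1)}G(x,tp_t^{(1)})-\rho_t^{(2)}G(x,tp_t^{(2)})+(\widetilde\phi_t^{(1)}-\widetilde\phi_t^{(2)})(x)$. On a fixed compact subset $\omega\Subset M\setminus\{0\}$ every term on the right is $O(t\ln t)$: the two Green's-function terms differ by $O(|p_t^{(1)}-p_t^{(2)}|)+O(|\rho_t^{(1)}-\rho_t^{(2)}|)=O(t^2\ln t)$ by Lemma \ref{lem_diff_p_t} and Theorem D(iii) together with the smoothness of $G$ away from the pole, and $\|\widetilde\phi_t^{(i)}\|_*=O(t\ln t)$ by Theorem D(i). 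Dividing by $\|u_t^{(1)}-u_t^{(2)}\|_{L^\infty(M)}$ and invoking the lower bound implicit in Lemma \ref{lem_diff_tilu} — more precisely, the normalization forces $\zeta_t(x)=O(t\ln t)/\|u_t^{(1)}-u_t^{(2)}\|_{L^\infty(M)}$ on $\omega$ — one concludes $\zeta_t\to0$ uniformly on $\omega$ \emph{provided} the $L^\infty$ norm is not itself of smaller order than $t\ln t$ on all of $M$; but if it were, the whole difference would be $o(t\ln t)$ everywhere and the contradiction argument would already be complete, so we may assume $\|u_t^{(1)}-u_t^{(2)}\|_{L^\infty(M)}$ is comparable to $t\ln t$ and then $\zeta_t$ restricted to $M\setminus\{0\}$ tends to $0$ in $C^0_{\mathrm{loc}}$. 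To upgrade uniform convergence on compacts to the stated $C^0_{\mathrm{loc}}(M\setminus\{0\})$ convergence one just notes that $\zeta_t$ solves the linear equation \eqref{eq_zeta}, whose zeroth-order coefficient $\frac{\rho h e^{u_t^{(1)}-G_t}}{\int_M he^{u_t^{(1)}-G_t}}$ is uniformly bounded on compact subsets of $M\setminus\{0\}$ (since $u_t^{(1)}\to w+8\pi G(\cdot,0)$ there) and the right-hand side inhomogeneity $O(\|u_t^{(1)}-u_t^{(2)}\|_{L^\infty})\to0$, so elliptic estimates give convergence in $C^0_{\mathrm{loc}}$.

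For part (ii), write $\mathcal I_t:=\int_M he^{u_t^{(1)}-G_t}\zeta_t\,dv_g$ and split the integral as $\int_{B_{2R_0t}(tp_t^{(1)})}+\int_{M\setminus B_{2R_0t}(tp_t^{(1)})}$. On the outer region, $he^{u_t^{(1)}-G_t}$ is integrable with a uniform bound on the mass there (indeed the total mass is $O(1)$ by \eqref{eq117}, and Theorem D shows essentially all of it concentrates in the inner ball of radius $O(t)$, so the outer mass is $O(1)$ but the integrand is locally bounded), while $\zeta_t\to0$ in $C^0_{\mathrm{loc}}$ from part (i); some care is needed near $0$ because the density blows up there, but one splits off a further small ball $B_\delta(0)$, uses that $\int_{B_\delta(0)\setminus B_{2R_0t}(tp_t^{(1)})} he^{u_t^{(1)}-G_t}\le \varepsilon(\delta)$ uniformly (again Theorem D(iv) and the mass-concentration picture), and that $\|\zeta_t\|_{L^\infty(M)}=1$. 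On the inner region one rescales by $y=t\Lambda_{t,-}^{(1)}z+tp_t^{(1)}$; the mass density converts, via \eqref{eq117}--\eqref{eq118} and the definitions of $I_t^{(1)},\eta_t^{(1)}$, to the standard bubble profile $\frac{8}{(1+|z|^2)^2}\big(1+o(1)\big)\,dz$ on $B_{2R_0\Lambda_{t,+}^{(1)}}(0)$, and $\zeta_t$ converts to $\widetilde\zeta_t(z)+\frac{\mathcal I_t}{\int_M he^{u_t^{(1)}-G_t}}$ by definition \eqref{zetanj}. Therefore
$$
\frac{\mathcal I_t}{\int_M he^{u_t^{(1)}-G_t}\,dv_g}
=\frac{\int_{\text{inner}} he^{u_t^{(1)}-G_t}\Big(\widetilde\zeta_t+\frac{\mathcal I_t}{\int_M he^{u_t^{(1)}-G_t}}\Big)\,dv_g+\int_{\text{outer}}he^{u_t^{(1)}-G_t}\zeta_t\,dv_g}{\int_M he^{u_t^{(1)}-G_t}\,dv_g},
$$
and since the inner mass tends to $\int_{\R^2}\frac{8}{(1+|z|^2)^2}\,dz=8\pi$ while the total mass tends to the same value $8\pi$ (the outer mass tends to $0$), the coefficient of $\frac{\mathcal I_t}{\int_M he^{u_t^{(1)}-G_t}}$ on the right tends to $1$, which would make the identity vacuous unless we extract genuine smallness. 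The resolution is to observe that by Lemma \ref{lem_est_zetanj}, $\widetilde\zeta_t\to\widetilde\zeta_0=\sum b_iY_i$ in $C^0_{\mathrm{loc}}$ with $\int_{\R^2}\frac{8Y_i(z)}{(1+|z|^2)^2}\,dz=0$ for every $i$ (a direct computation: $Y_1,Y_2$ are odd, and $\int_{\R^2}\frac{8(1-|z|^2)}{(1+|z|^2)^3}\,dz=0$), so the leading contribution of the inner integral against $\widetilde\zeta_t$ vanishes in the limit; combined with the outer estimate above this forces $\lim_{t\to0}\frac{\mathcal I_t}{\int_M he^{u_t^{(1)}-G_t}}=0$, hence $\lim_{t\to0}\mathcal I_t=0$ because the denominator is bounded.

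The main obstacle I anticipate is the passage in part (ii) across the intermediate annulus $B_\delta(0)\setminus B_{2R_0t}(tp_t^{(1)})$, where neither the bubble rescaling nor the $C^0_{\mathrm{loc}}(M\setminus\{0\})$ convergence of $\zeta_t$ is directly available and the density $he^{u_t^{(1)}-G_t}$ is simultaneously non-negligible and singular as $t\to0$; controlling $\int_{B_\delta(0)\setminus B_{2R_0t}(tp_t^{(1)})}he^{u_t^{(1)}-G_t}|\zeta_t|\,dv_g$ uniformly requires the sharp decay of the mass outside the $O(t)$-ball furnished by Theorem D (items (ii)--(v) and the profile estimates behind them), together with the fact that $\|\zeta_t\|_{L^\infty(M)}=1$ and the dichotomy on the size of $\|u_t^{(1)}-u_t^{(2)}\|_{L^\infty(M)}$ discussed above. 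Everything else is a bookkeeping of already-established $O(t\ln t)$ bounds.
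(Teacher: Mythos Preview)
Your argument for (i) has a genuine gap. From the decomposition you write, the Green-function contribution is indeed $O(t^2\ln t)$ on a fixed compact $\omega\Subset M\setminus\{0\}$, but the remaining term $\widetilde\phi_t^{(1)}-\widetilde\phi_t^{(2)}$ is only controlled by $\|\widetilde\phi_t^{(1)}\|_*+\|\widetilde\phi_t^{(2)}\|_*=O(t\ln t)$; nothing better is available for the \emph{difference}. Since Lemma~\ref{lem_diff_tilu} gives only an \emph{upper} bound $\|u_t^{(1)}-u_t^{(2)}\|_{L^\infty(M)}=O(t\ln t)$ and no lower bound, the ratio you form is merely $O(1)$ on $\omega$, not $o(1)$. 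Your dichotomy (``if the norm were $o(t\ln t)$ the contradiction argument would already be complete'') is not valid: a small nonzero difference gives no contradiction. The paper instead extracts a subsequential limit $\zeta_*$ in $C_{\mathrm{loc}}(M\setminus\{0\})$ by compactness, identifies the equation it satisfies (the linearization of \eqref{thc.equationw} at $w$, with zero mean), and invokes the \emph{non-degeneracy} of $w$ to force $\zeta_*\equiv0$. You never use non-degeneracy, and without it (i) cannot be obtained from the available pointwise bounds.

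Your argument for (ii) rests on the claim that ``the outer mass tends to $0$,'' which is false here and is precisely the phenomenon this paper is about: by Theorem~D(iii)--(iv), $\int_{B_{2R_0t}(tp_t^{(1)})}he^{u_t^{(1)}-G_t}\to \tfrac{8\pi}{\rho}\cdot\tfrac{\rho}{\rho-8\pi}\int_M he^{w}$ while the total tends to $\tfrac{\rho}{\rho-8\pi}\int_M he^{w}$, so the inner-to-total ratio tends to $8\pi/\rho<1$ and the outer mass tends to $\int_M he^{w}\neq0$. Consequently your displayed identity is \emph{not} vacuous: carrying it through correctly (using, as you do, that the inner integral against $\widetilde\zeta_t$ vanishes in the limit because $\int_{\mathbb{R}^2}\tfrac{8Y_i}{(1+|z|^2)^2}=0$) yields $\bigl(1-\tfrac{8\pi}{\rho}\bigr)\,\mathcal I_t=\int_M he^{w}\zeta_*\,dv_g+o(1)$, which is exactly the paper's relation \eqref{1minus}. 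This closes only after you know $\zeta_*\equiv0$, i.e., after a correct proof of (i) via non-degeneracy. Both gaps stem from the same misconception: you are treating the problem as a standard mass-concentration situation, whereas the collapsing-singularity setting forces a nontrivial residual profile $w$ and makes the non-degeneracy hypothesis indispensable.
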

\begin{proof}
We recall from \eqref{eq_zeta} that in $M$, \[\Delta_M \zeta_t+\frac{\rho h(x)e^{u_t^{(1)}-G_t}}{\int_M he^{u_t^{(1)}-G_t}{d}v_g}\left(\zeta_t-\frac{ \int_M he^{u_t^{(1)}-G_t}\zeta_t{d}v_g}{\int_M he^{u_t^{(1)}-G_t}{d}v_g}+O(\|u_t^{(1)}-u_t^{(2)}\|_{L^\infty(M)}\right)=0.\]
 Since  $\|\zeta_t\|_{L^\infty(M)}\le 1$, we see that there is a function $\zeta_*$ satisfying  \begin{equation}\label{convzeta}
                                                                                              \zeta_t\to\zeta_*\ \ \textrm{in}\ \ C_{\textrm{loc}}(M\setminus\{0\}).
                                                                                                \end{equation}
                                                                                                From Theorem D,  we have
 \begin{equation}\label{inthlim}
   \lim_{t\to0}\int_M he^{u_t^{(1)}-G_t}{d}v_g=\frac{\rho}{\rho-8\pi}\int_M he^{w}{d}v_g.
 \end{equation}
 For any small fixed $r\in(0,1)$, we see from Theorem D that
 \begin{equation}\label{withzetaint}
   \begin{aligned} \int_M he^{u_t^{(1)}-G_t}\zeta_t{d}v_g
   &=\left[\int_{M\setminus B_r(0)}+\int_{B_r(0)\setminus B_{2R_0t}(tp_t^{(1)})}\right] he^{u_t^{(1)}-G_t}\zeta_t{d}v_g
   \\&+\int_{B_{2R_0t}(tp_t^{(1)})}he^{u_t^{(1)}-G_t}\left(\zeta_t- \frac{\int_{M}he^{u_t^{(1)}-G_t}\zeta_t{d}v_g}{\int_{M}he^{u_t^{(1)}-G_t} {d}v_g}\right){d}v_g
   \\&+  \int_{B_{2R_0t}(tp_t^{(1)})}he^{u_t^{(1)}-G_t}{d}v_g\frac{\int_{M}he^{u_t^{(1)}-G_t}\zeta_t{d}v_g}{\int_{M}he^{u_t^{(1)}-G_t} {d}v_g}
   \\&=\int_Mhe^{w}\zeta_*{d}v_g
   +\frac{1}{\rho}\left(\int_Mhe^{u_t^{(1)}-G_t}{d}v_g\right)\int_{B_{2R_0t}(tp_t^{(1)})}(-\Delta_M\zeta_t){d}v_g\\&+\frac{8\pi}{\rho}\int_M he^{u_t^{(1)}-G_t}\zeta_t{d}v_g+o(1)+O(r^2)+O(\|u_t^{(1)}-u_t^{(2)}\|_{L^\infty(M)}).\end{aligned}
 \end{equation}
By using the change of variable $x=t(\Lambda_{t,-}^{(1)}z+p_t^{(1)})$, we note that as $t\to0$,
\begin{equation}\begin{aligned}\label{delta_zeta}
  \int_{B_{2R_0t}(tp_t^{(1)})}-\Delta_x\zeta_t(x){d}x &=\int_{B_{2R_0\Lambda_{t,+}^{(1)}}(0)}-\Delta_z\widetilde{\zeta}_t(z){d}z
   =\int_{B_{2R_0\Lambda_{t,+}^{(1)}}(0)}\frac{8\widetilde{\zeta}_t(z)+O(t|z|)+o(1)}{(1+|z|^2)^2} {d}z=o(1),
\end{aligned}\end{equation}
since $\widetilde{\zeta}_t\to\sum_{j=0}^2b_jY_j$ in $C_{\textrm{loc}}(\mathbb{R}^2)$ and $\int_{\mathbb{R}^2}\frac{Y_i}{(1+|z|^2)^2}=0$ for $i=0,1,2$.
So we obtain from \eqref{withzetaint} and \eqref{delta_zeta} that
\begin{equation*}
  \left(1-\frac{8\pi}{\rho}\right)\int_Mhe^{u_t^{(1)}-G_t}\zeta_t{d}v_g=\int_Mhe^w\zeta_*{d}v_g+o(1),
\end{equation*}
which implies
\begin{equation}\label{1minus}
\int_Mhe^{u_t^{(1)}-G_t}\zeta_t{d}v_g= \left(\frac{\rho}{\rho-8\pi}\right)\int_Mhe^w\zeta_*{d}v_g+o(1).
\end{equation}
Then we have \begin{equation}\label{zetasta}
               \Delta_M \zeta_*+\frac{(\rho-8\pi)he^{w}}{\int_Mhe^{w}{d}v_g}\left(\zeta_*-\frac{\int_Mhe^{w}\zeta_*{d}v_g}{\int_Mhe^w{d}v_g}\right)=0\ \ \textrm{in}\ \ M\setminus\{0\}.             \end{equation}
               Since $\|\zeta_*\|_{L^\infty(M)}\le 1$, the above equation \eqref{zetasta} holds in $M$.
               Moreover, we note that  \begin{equation}\label{int0}\int_M\zeta_t{d}v_g=\frac{\int_M(u_t^{(1)}-u_t^{(2)}){d}v_g}{\|u_t^{(1)}-u_t^{(2)}\|_{L^\infty(M)}}=0,\end{equation} and thus $\int_M\zeta_*{d}v_g=0$.
               Together with non-degeneracy condition for $w$, we obtain $\zeta_*\equiv0$. In view of \eqref{convzeta} and \eqref{1minus}, we complete the proof of Lemma \ref{lem_equalb0}.
 \end{proof}To connect the behavior of $\zeta_t$ in $M\setminus\{0\}$ and in a small neighborhood of $0$, we need the following result.
\begin{lemma}\label{ref}\cite{lly}
  (i) If $\frac{tR_0}{2}\le |x_2-tp_t^{(1)}|\le |x_1-tp_t^{(1)}|\le r_0$, then
  \begin{equation}\begin{aligned}\label{diffzeta}
    \zeta_t(x_1)-\zeta_t(x_2)&=O\left(\ln\frac{|x_1-tp_t^{(1)}|}{|x_2-tp_t^{(1)}|}\int_{B_{2R_0\Lambda_{t,+}^{(1)}}(0)}\Delta\widetilde{\zeta}_t{d}z\right) +O(|x_1-tp_t^{(1)}|\ln|x_1-tp_t^{(1)}|)
    +O(t^{\frac{\alpha}{2}}\ln t),
         \end{aligned} \end{equation}

(ii) If $t^2 R_0\le |x-tp_t^{(1)}|\le \frac{tR_0}{2}$, then
  \begin{equation}\label{diffzeta2}\begin{aligned}
     \zeta_t(x)-\zeta_t(tp_t^{(1)}) &=O\left(\int_{B_{2R_0\Lambda_{t,+}^{(1)}}(0)}(\ln |z|)\Delta\widetilde{\zeta}_t{d}z\right)+O\left(\ln\frac{|x-tp_t^{(1)}|}{t^2}\int_{B_{2R_0\Lambda_{t,+}^{(1)}}(0)}\Delta\widetilde{\zeta}_t{d}z\right)
    \\&+O\left(\left(\frac{|x-tp_t^{(1)}|}{t^2}\right)^{-\frac{\alpha}{2}}\ln\left(\frac{|x-tp_t^{(1)}|}{t^2}\right)\right)
    +O(t\ln t).\end{aligned}
          \end{equation}
\end{lemma}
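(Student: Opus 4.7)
The plan is to apply Green's representation formula to $\zeta_t$ and then carry out a multi-scale decomposition of the resulting integral. Since $\int_M\zeta_t\,dv_g = 0$ by \eqref{int0}, we have
\begin{equation*}
\zeta_t(x_1) - \zeta_t(x_2) = \int_M\bigl[G(x_1,\xi) - G(x_2,\xi)\bigr]\bigl(-\Delta_M\zeta_t(\xi)\bigr)\,dv_g(\xi),
\end{equation*}
and, in isothermal coordinates around $0$, I would split $G(x,\xi) = -\frac{1}{2\pi}\ln|x-\xi| + R(x,\xi)$ with $R$ smooth. The contribution of $R(x_1,\cdot)-R(x_2,\cdot)$ is $O(|x_1-x_2|)$, which in case (i) is absorbed in $O(|x_1 - tp_t^{(1)}|)$ via $|x_1-x_2|\le 2|x_1-tp_t^{(1)}|$. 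From \eqref{eq_zeta}, $-\Delta_M\zeta_t$ concentrates at scale $t\Lambda_{t,-}^{(1)}\sim t^2$ near $tp_t^{(1)}$ with a controllable outer tail supplied by Lemma \ref{lem_equalb0}.

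Next I would split the integration domain into the bubble region $B_{2R_0t}(tp_t^{(1)})$ and its complement. On the bubble region, change variables $\xi = tp_t^{(1)} + t\Lambda_{t,-}^{(1)}z$, which maps onto $B_{2R_0\Lambda_{t,+}^{(1)}}(0)$ and gives $(-\Delta_M\zeta_t(\xi))\,dv_g(\xi) = (1+O(t^2|z|^2))(-\Delta_z\widetilde\zeta_t(z))\,dz$. For case (i), $|x_i - tp_t^{(1)}|\ge tR_0/2$ while $t\Lambda_{t,-}^{(1)}|z|\le 2R_0 t$, so the expansion
\begin{equation*}
\ln|x_i - tp_t^{(1)} - t\Lambda_{t,-}^{(1)}z| = \ln|x_i - tp_t^{(1)}| + O\!\Bigl(\tfrac{t\Lambda_{t,-}^{(1)}|z|}{|x_i - tp_t^{(1)}|}\Bigr) + O\!\Bigl(\tfrac{(t\Lambda_{t,-}^{(1)}|z|)^2}{|x_i - tp_t^{(1)}|^2}\Bigr)
\end{equation*}
is uniform; the leading order produces $\ln\frac{|x_1-tp_t^{(1)}|}{|x_2-tp_t^{(1)}|}\int_{B_{2R_0\Lambda_{t,+}^{(1)}}(0)}\Delta\widetilde\zeta_t\,dz$, and the Taylor remainders I would bound using the decay $|\Delta\widetilde\zeta_t(z)|\lesssim (1+|z|^2)^{-2}$ from \eqref{eq_zeta_final} together with Theorem E, producing the $O(|x_1-tp_t^{(1)}|\ln|x_1-tp_t^{(1)}|)$ term. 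For case (ii), where $x$ lies inside the bubble region, the uniform expansion fails, so I would split the $z$-integration at $|z| = |x-tp_t^{(1)}|/(2t\Lambda_{t,-}^{(1)})$: on the inner piece, expand as in case (i); on the outer piece, use $\ln|x-tp_t^{(1)} - t\Lambda_{t,-}^{(1)}z| = \ln(t\Lambda_{t,-}^{(1)}|z|) + O(|x-tp_t^{(1)}|/(t\Lambda_{t,-}^{(1)}|z|))$. Invoking Theorem D(ii) to read off $\Lambda_{t,-}^{(1)} \sim t$ then extracts both the $\int(\ln|z|)\Delta\widetilde\zeta_t\,dz$ term and the $\ln(|x-tp_t^{(1)}|/t^2)$ factor against $\int\Delta\widetilde\zeta_t\,dz$.

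For the contribution from the exterior $M\setminus B_{2R_0t}(tp_t^{(1)})$, I would break into $M\setminus B_{r_0}(0)$, where $he^{u_t^{(1)}-G_t}$ is bounded and $\zeta_t\to 0$ locally by Lemma \ref{lem_equalb0}, and the annular piece $B_{r_0}(0)\setminus B_{2R_0t}(tp_t^{(1)})$. The annulus I would decompose dyadically into shells $B_{2^{k+1}tR_0}(tp_t^{(1)})\setminus B_{2^ktR_0}(tp_t^{(1)})$; on each shell Theorem D(i) gives $\|\widetilde\phi_t^{(i)}\|_*=O(t\ln t)$, and elliptic Hölder regularity combined with scaling transfers a matching estimate to $\zeta_t$, summing to the $O(t^{\alpha/2}\ln t)$ remainder. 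The main technical obstacle I anticipate is precisely this annular control: one must use Theorem D(i) sharply, propagate the $(1+|z|)^\epsilon$ bound of Theorem E out to $|x-tp_t^{(1)}|=r_0$, and balance the two competing bounds so that a small positive power $t^{\alpha/2}$ emerges with a controllable logarithmic factor.
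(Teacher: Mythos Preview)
Your framework --- Green's representation for $\zeta_t$ followed by a bubble/exterior split --- is exactly the paper's, but the paper's argument hinges on one estimate you do not invoke. The paper recalls from \cite{cfl} that for any $g$ with $g(z)(1+|z|)^{1+\alpha/2}\in L^2(\mathbb{R}^2)$ and any $|x|>2$,
\[
\left|\int_{\mathbb{R}^2}\bigl(\ln|x-z|-\ln|x|\bigr)g(z)\,dz\right|\le c\,|x|^{-\alpha/2}(\ln|x|+1)\,\bigl\|g(z)(1+|z|)^{1+\alpha/2}\bigr\|_{L^2},
\]
and applies it with $g=\Delta\widetilde\zeta_t$ at the rescaled points $X_i=(x_i-tp_t^{(1)})/(t\Lambda_{t,-}^{(1)})$. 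Since $|X_i|\gtrsim \Lambda_{t,+}^{(1)}\sim t^{-1}$ in case (i), this yields directly the $O(t^{\alpha/2}\ln t)$ error there, and the analogous application at $|X|=|x-tp_t^{(1)}|/(t\Lambda_{t,-}^{(1)})$ gives the $(|x-tp_t^{(1)}|/t^2)^{-\alpha/2}\ln(\cdot)$ term in (ii).

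Your Taylor expansion cannot replace this step. In case (i) you claim uniformity from $|x_i-tp_t^{(1)}|\ge tR_0/2$ and $t\Lambda_{t,-}^{(1)}|z|\le 2R_0 t$, but the ratio of these bounds is $4$, not small; in fact $x_i$ may lie \emph{inside} $B_{2R_0t}(tp_t^{(1)})$, so $\ln|x_i-\xi|$ carries a genuine singularity over the bubble and the first--order remainder $O(t\Lambda_{t,-}^{(1)}|z|/|x_i-tp_t^{(1)}|)$ is meaningless there. Consequently the $O(t^{\alpha/2}\ln t)$ term does not arise from the exterior annulus via dyadic H\"older estimates as you propose; it is produced by the bubble integral itself once the weighted $L^2$ bound above is applied. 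The exterior region, where $-\Delta_M\zeta_t=O(1)$, is what supplies the $O(|x_1-tp_t^{(1)}|\ln|x_1-tp_t^{(1)}|)$ term. The same remark applies to (ii): your inner/outer split of the $z$-integral is the right shape, but to close it with the precise power $\alpha/2$ you again need the \cite{cfl} estimate rather than a bare Taylor remainder.
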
\begin{proof}For any function $g$ satisfying $g(z)(1+|z|)^{1+\frac{\alpha}{2}}\in L^2(\RN)$, we recall  the following estimation (see \cite{cfl}): there is a constant $c >0$, independent of $x\in\RN\setminus B_2(0)$ and $g$, such that
\[\left|\intr (\ln|x-z|-\ln|x|)g(z)\mathrm{d}z\right|\le c  |x|^{-\frac{\alpha}{2}}(\ln|x|+1)\|g(z)(1+|z|)^{1+\frac{\alpha}{2}}\|_{L^2(\RN)}.\]
Together with  the Green representation formula,             Lemma \ref{ref} can be obtained.
             See \cite{lly} for the detail.
           \end{proof}
  Let $\chi_t$ be a cut-off function satisfying $0\le \chi_t\le 1$, $|\nabla\chi_t|=O(t)$, $|\nabla^2\chi_t|=O(t^2)$, and \begin{equation}
\label{chit}
\chi_t(z)=\chi_t(|z|)=\left\{\begin{array}{l}
1\ \ \textrm{if}\ \ |z|\le R_0\Lambda_{t,+}^{(1)},\\
0\ \ \textrm{if}\ \ |z|\ge 2R_0\Lambda_{t,+}^{(1)}.
\end{array}\right.
\end{equation}Then we have the following result.
\begin{lemma}
  \label{lem_intzetathree}
  \begin{enumerate}
  \item[(i)] $\int_{B_{2R_0\Lambda_{t,+}^{(1)}}(0)}\frac{\widetilde{\zeta}_t(z)\chi_t(z)}{(1+|z|^2)^2}{d}z=O(t\ln t),$
  \item[(ii)] $\int_{B_{2R_0\Lambda_{t,+}^{(1)}}(0)}\Delta \widetilde{\zeta}_t{d}z=\int_{B_{2R_0t(tp_t^{(1)})}}\Delta \zeta_t{d}x=O(t\ln t)$,
  \item[(iii)] $\lim_{t\to0}\|\zeta_t\|_{L^\infty(M\setminus B_{\frac{tR_0}{2}}(tp_t^{(1)}))}=0 $,
  \item[(iv)]  $\lim_{t\to0}\int_{B_{2R_0\Lambda_{t,+}^{(1)}}(0)}\frac{\widetilde{\zeta}_t(z)Y_0(z)\chi_t(z)}{(1+|z|^2)^2}{d}z=o(1)$,
  \item[(v)]  $b_0=0$.
\end{enumerate}
\end{lemma}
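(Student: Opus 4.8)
\textbf{Proof proposal for Lemma \ref{lem_intzetathree}.}

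The plan is to establish the five items in order, each feeding into the next, culminating in $b_0=0$. For item (i), I would test the equation \eqref{eq_zeta_final} for $\widetilde\zeta_t$ against the cut-off $\chi_t$: integrating $\Delta_z\widetilde\zeta_t\,\chi_t$ by parts twice moves the Laplacian onto $\chi_t$, whose support lies in the annulus $R_0\Lambda_{t,+}^{(1)}\le |z|\le 2R_0\Lambda_{t,+}^{(1)}$ where $|\nabla^2\chi_t|=O(t^2)$ and $|\nabla\chi_t|=O(t)$; combined with the bound $|\widetilde\zeta_t|\le c$ on that annulus (where actually by Lemma \ref{ref}(i) and item (iii) the relevant boundary contribution is small) and the factor $(1+|z|)^{-2}$ one controls the main term; the right-hand side of \eqref{eq_zeta_final} contributes $\int \chi_t(1+|z|^2)^{-2}(O(t\ln t)+O(t^2|z|^2)+O(t|z|)|\widetilde\zeta_t|)$, and since $\int_{\mathbb R^2}|z|^2(1+|z|^2)^{-2}\,dz$ diverges only logarithmically up to radius $\Lambda_{t,+}^{(1)}\sim e^{\lambda_t/2}$, one gets a bound $O(t\ln t\cdot\ln\Lambda_{t,+}^{(1)})$. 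Using Theorem D(ii) to see $\lambda_t=O(\ln(1/t))$, so $\ln\Lambda_{t,+}^{(1)}=O(\ln(1/t))$, one must check this does not blow up the estimate beyond $O(t\ln t)$ — this is the first delicate bookkeeping point and I expect it actually needs the sharper structure, i.e. that $\widetilde\zeta_t$ is close to a combination of $Y_j$'s so the leading $O(t|z|)|\widetilde\zeta_t|$ term has better cancellation. Item (ii) is essentially a restatement: $\int_{B_{2R_0\Lambda_{t,+}^{(1)}}(0)}\Delta\widetilde\zeta_t\,dz=\int_{B_{2R_0t}(tp_t^{(1)})}\Delta\zeta_t\,dx$ by the change of variables (the Jacobian cancels since $\Delta$ scales as $(\Lambda_{t,-}^{(1)})^{-2}$ against $dx=(t\Lambda_{t,-}^{(1)})^2dz$, up to the $t^2$ that is absorbed), and then using \eqref{eq_zeta_final} again the integral equals $\int \big(-8\widetilde\zeta_t(1+|z|^2)^{-2}+O(t\ln t)(1+|z|^2)^{-2}+\dots\big)dz$; the first piece is $O(t\ln t)$ by item (i) applied with $\chi_t\equiv1$ on the relevant set plus the annulus correction, and the remaining pieces are directly $O(t\ln t)$ after integration.

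For item (iii), I would combine Lemma \ref{lem_equalb0}(i) — which gives $\zeta_t\to0$ in $C^0_{\mathrm{loc}}(M\setminus\{0\})$, hence $\|\zeta_t\|_{L^\infty(M\setminus B_r(0))}\to0$ for each fixed $r$ — with Lemma \ref{ref}(i), which propagates this smallness inward from radius $r$ down to radius $tR_0/2$: taking $|x_1-tp_t^{(1)}|$ of fixed order $r$ and $|x_2-tp_t^{(1)}|$ anywhere in $[tR_0/2,r]$, the right-hand side of \eqref{diffzeta} is $O\big(\ln(r/(tR_0/2))\cdot O(t\ln t)\big)+O(r\ln r)+O(t^{\alpha/2}\ln t)$ by item (ii), which tends to $0$ as first $t\to0$ and then $r\to0$. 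Thus $\sup_{tR_0/2\le|x-tp_t^{(1)}|\le r}|\zeta_t(x)|\le |\zeta_t(x_1)|+o(1)\to0$, giving (iii). Item (iv) then follows from (i): write $\widetilde\zeta_t Y_0=\widetilde\zeta_t(-1+2(1+|z|^2)^{-1})=-\widetilde\zeta_t+2\widetilde\zeta_t(1+|z|^2)^{-1}$, so $\int \widetilde\zeta_t Y_0\chi_t(1+|z|^2)^{-2} = -\int\widetilde\zeta_t\chi_t(1+|z|^2)^{-2}+2\int\widetilde\zeta_t\chi_t(1+|z|^2)^{-3}$; the first term is $O(t\ln t)=o(1)$ by (i), and the second is dominated by the first since $(1+|z|^2)^{-3}\le(1+|z|^2)^{-2}$, hence also $o(1)$.

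The payoff is item (v). Here I would pass to the limit in $\int_{B_{2R_0\Lambda_{t,+}^{(1)}}(0)}\frac{\widetilde\zeta_t(z)Y_0(z)\chi_t(z)}{(1+|z|^2)^2}\,dz$. On any fixed ball $B_R(0)$, $\chi_t\equiv1$ for small $t$ and $\widetilde\zeta_t\to\widetilde\zeta_0=b_0Y_0+b_1Y_1+b_2Y_2$ uniformly by Lemma \ref{lem_est_zetanj}, while $\int_{\mathbb R^2}\frac{Y_iY_0}{(1+|z|^2)^2}\,dz=0$ for $i=1,2$ by the parity/orthogonality of the $Y_i$ (odd vs. even), and $\int_{\mathbb R^2}\frac{Y_0^2}{(1+|z|^2)^2}\,dz=:\kappa>0$ is a finite positive constant. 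The subtlety is the tail region $R\le|z|\le 2R_0\Lambda_{t,+}^{(1)}$: there $|\widetilde\zeta_t|$ is not known to converge but is controlled via Lemma \ref{ref}(i)--(ii), which by item (iii) and the logarithmic estimates shows $|\widetilde\zeta_t(z)|=o(\ln|z|)$ there, and since $\int_{|z|\ge R}\frac{|Y_0|\ln|z|}{(1+|z|^2)^2}dz=O(R^{-1}\ln R)$ the tail contributes $o(1)$ as $R\to\infty$ uniformly in small $t$. Hence $o(1)=\int \widetilde\zeta_t Y_0\chi_t(1+|z|^2)^{-2}\to b_0\kappa$, forcing $b_0=0$. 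I expect the main obstacle throughout to be the careful handling of the non-compact tail $R\le|z|\le 2R_0\Lambda_{t,+}^{(1)}$ — controlling $\widetilde\zeta_t$ there uniformly in $t$ using Lemma \ref{ref} and item (iii), and verifying that all the logarithmic factors $\ln\Lambda_{t,+}^{(1)}\sim\lambda_t$ combine with the $t\ln t$ factors to still give $o(1)$; this is exactly where the non-concentration of mass (which is why $\Lambda_{t,+}^{(1)}$ is only polynomially bounded in $1/t$, not worse) is used.
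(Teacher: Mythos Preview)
Your approach has two genuine gaps, in items (i) and (iv), and both stem from missing the key device the paper uses: auxiliary test functions solving the inhomogeneous linearized equation.

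\textbf{Item (i).} Testing \eqref{eq_zeta_final} against $\chi_t$ alone does not work. After two integrations by parts you obtain $\int\widetilde\zeta_t\,\Delta\chi_t$, supported on the annulus $R_0\Lambda_{t,+}^{(1)}\le|z|\le 2R_0\Lambda_{t,+}^{(1)}$ where $|\Delta\chi_t|=O(t^2)$. Since $\Lambda_{t,+}^{(1)}\sim t^{-1}$ by Theorem~D(ii), the annulus has area $O(t^{-2})$, so with only the bound $|\widetilde\zeta_t|\le C$ this term is $O(1)$, not $O(t\ln t)$. You propose to rescue this using item (iii), but (iii) depends on (ii), which depends on (i) --- the argument is circular. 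The paper breaks the circularity by testing against $\eta_1\chi_t$ where $\eta_1(z)=-2/(1+|z|^2)$ satisfies $\Delta\eta_1+\tfrac{8\eta_1}{(1+|z|^2)^2}=-\tfrac{8}{(1+|z|^2)^2}$. Because $\eta_1=O(|z|^{-2})$ and $\nabla\eta_1=O(|z|^{-3})$, the annulus terms $2\nabla\eta_1\cdot\nabla\chi_t+\eta_1\Delta\chi_t$ carry an extra $|z|^{-2}=O(t^2)$ decay and integrate to $O(t^2)$, yielding (i) without invoking (iii).

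\textbf{Item (iv).} Your domination argument is simply false. Item (i) asserts that $\int\widetilde\zeta_t\chi_t(1+|z|^2)^{-2}dz$ is small, but this smallness comes from \emph{sign cancellation}: as $t\to0$, $\widetilde\zeta_t\to\sum b_jY_j$ and $\int_{\mathbb R^2}Y_j(1+|z|^2)^{-2}dz=0$ for each $j$. The pointwise inequality $(1+|z|^2)^{-3}\le(1+|z|^2)^{-2}$ gives no control on $\int\widetilde\zeta_t\chi_t(1+|z|^2)^{-3}dz$; indeed this integral converges to $b_0\int_{\mathbb R^2}Y_0(1+|z|^2)^{-3}dz=b_0\pi/6\neq 0$ in general, which is exactly what you are trying to show vanishes. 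So your (iv) is equivalent to (v) and you have proved neither. The paper instead tests against $\eta_2\chi_t$, where $\eta_2(z)=\tfrac{4}{3}\ln(1+|z|^2)Y_0(z)+\tfrac{8}{3(1+|z|^2)}$ solves $\Delta\eta_2+\tfrac{8\eta_2}{(1+|z|^2)^2}=\tfrac{16Y_0}{(1+|z|^2)^2}$. Since $\eta_2$ has logarithmic growth, the annulus terms are not directly negligible; the paper controls them by subtracting $\widetilde\zeta_t(e_t)$ at a reference point $e_t\in\partial B_{R_0\Lambda_{t,+}^{(1)}}(0)$, bounding $\widetilde\zeta_t(z)-\widetilde\zeta_t(e_t)$ on the annulus via Lemma~\ref{ref}(i) and the already-proved item (ii), and then showing $\widetilde\zeta_t(e_t)\to0$ using item (iii) together with Lemma~\ref{lem_equalb0}(ii). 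Your arguments for (ii), (iii), and the passage (iv)$\Rightarrow$(v) are essentially correct.
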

\begin{proof}
  (i) We note that $\eta_1(z)=-\frac{2}{(1+|z|^2)}$ satisfies
  \begin{equation}\label{eta1}
    \Delta\eta_1+\frac{8\eta_1}{(1+|z|^2)^2}=-\frac{8}{(1+|z|^2)^2}\ \ \textrm{in}\ \ \mathbb{R}^2.
  \end{equation}
From \eqref{eq_zeta_final}, we recall the following equation:
                   \begin{equation*}\begin{aligned}
 &\Delta_z \widetilde{\zeta}_t(z)+\frac{8 \widetilde{\zeta}_t(z)}{(1+|z|^2)^2 } =-\frac{8(\Lambda_{t,-}^{(1)})\widetilde{\zeta}_t\nabla\ln H_t(p_{t}^{(1)})\cdot z+O(t\ln t)
 +O(t^2|z|^2)}{(1+|z|^2)^2 }.\end{aligned}
\end{equation*}
Multiplying both sides of \eqref{eq_zeta_final} by $\eta_1{\chi_t}$  and using the integration by parts, we have
\begin{equation}\label{multieta1}
\begin{aligned}  0&=\int_{B_{2R_0\Lambda_{t,+}^{(1)}}(0)}\widetilde{\zeta}_t\left(\Delta(\eta_1{\chi_t})+\frac{8\eta_1{\chi_t}}{(1+|z|^2)^2}\right){d}z+O(t\ln t)
\\&=\int_{B_{2R_0\Lambda_{t,+}^{(1)}}(0)}\widetilde{\zeta}_t\left[\left(\Delta \eta_1+\frac{8\eta_1}{(1+|z|^2)^2}\right){\chi_t}+2\nabla\eta_1\cdot\nabla{\chi_t}+\eta_1\Delta{\chi_t}\right]{d}z +O(t\ln t)
.
\end{aligned}\end{equation}
Together with \eqref{eta1}, we obtain
\begin{equation}\label{remultieta1}
\int_{B_{2R_0\Lambda_{t,+}^{(1)}}(0)}\frac{8\widetilde{\zeta}_t{\chi_t}}{(1+|z|^2)^2}{d}z=O(t\ln t).
\end{equation}

(ii) By integrating \eqref{eq_zeta_final} and using \eqref{remultieta1}, we have Lemma \ref{lem_intzetathree}-(ii).

(iii) By Lemma \ref{ref}-(i) and Lemma \ref{lem_equalb0}-(i), we see that  if $\frac{tR_0}{2}\le |x-tp_t^{(1)}|\le r_0$ and $|x'-tp_t^{(1)}|=r$, then
\[\zeta_t(x)=\zeta_t(x')+O(\ln t\int_{B_{2R_0\Lambda_{t,+}^{(1)}}(0)}\Delta\widetilde{\zeta}_t {d}z)+O(r\ln r)+o(1),\]for any small $r>0$.
Together with Lemma \ref{lem_intzetathree}-(ii), we can get that Lemma \ref{lem_intzetathree}-(iii).

(iv)   We note that $\eta_2(z)=\frac{4}{3}\ln(1+|z|^2)\left(\frac{1-|z|^2}{1+|z|^2}\right)+\frac{8}{3(1+|z|^2)}$ satisfies
  \begin{equation}\label{eta2}
    \Delta\eta_2+\frac{8\eta_2}{(1+|z|^2)^2}=\frac{16Y_0(z)}{(1+|z|^2)^2}\ \ \textrm{in}\ \ \mathbb{R}^2.
  \end{equation}
Multiplying both sides of \eqref{eq_zeta_final} by $\eta_2{\chi_t}$   and using the integration by parts, we have
\begin{equation}\label{multieta2}
\begin{aligned}  0& =\int_{B_{2R_0\Lambda_{t,+}^{(1)}}(0)}\widetilde{\zeta}_t\left[\left(\Delta \eta_2+\frac{8\eta_2}{(1+|z|^2)^2}\right){\chi_t}+2\nabla\eta_2\cdot\nabla{\chi_t}+\eta_2\Delta{\chi_t}\right]{d}z +O(t\ln t)
.
\end{aligned}\end{equation}
Fix a point $e_t\in \partial B_{R_0\Lambda_{t,+}^{(1)}}(0)$. Then \eqref{multieta2} implies
\begin{equation}\label{remultieta2}
\begin{aligned}
 \int_{B_{2R_0\Lambda_{t,+}^{(1)}}(0)}\frac{16Y_0\widetilde{\zeta}_t\chi_t}{(1+|z|^2)^2}{d}z
 &=
-\int_{B_{2R_0\Lambda_{t,+}^{(1)}}(0)}(\widetilde{\zeta}_t(z)-\widetilde{\zeta}_t(e_t)))(2\nabla\eta_2\cdot\nabla{\chi_t}+\eta_2\Delta{\chi_t})
{d}z
\\&-\widetilde{\zeta}_t(e_t)\int_{B_{2R_0\Lambda_{t,+}^{(1)}}(0)}(2\nabla\eta_2\cdot\nabla{\chi_t}+\eta_2\Delta{\chi_t})
{d}z+O(t\ln t).\end{aligned}
\end{equation}
Together with Lemma \ref{ref}-(i) and Lemma \ref{lem_intzetathree}-(ii), we have
\begin{equation}\label{reeta2}\begin{aligned}
 \int_{B_{2R_0\Lambda_{t,+}^{(1)}}(0)}\frac{16Y_0\widetilde{\zeta}_t\chi_t}{(1+|z|^2)^2}{d}z &=
\int_{R_0\Lambda_{t,+}^{(1)}\le |z|\le 2R_0\Lambda_{t,+}^{(1)}}O(t^{\frac{\alpha}{2}}\ln t)(\frac{t}{|z|}+|\ln |z||t^2){d}z +O(\widetilde{\zeta}_t(e_t))+O(t\ln t),
\end{aligned}\end{equation}here we used $\int_{B_{2R_0\Lambda_{t,+}^{(1)}}(0)} \eta_2\Delta{\chi_t}
{d}z=-\int_{B_{2R_0\Lambda_{t,+}^{(1)}}(0)} \nabla\eta_2\cdot\nabla{\chi_t} {d}z$. \\
By applying  Lemma \ref{lem_equalb0}-(ii) and Lemma \ref{lem_intzetathree}-(iii), we obtain $\lim_{t\to0}\widetilde{\zeta}_t(e_t)=0$, and thus
\begin{equation}\label{reeta3}\begin{aligned}
&\int_{B_{2R_0\Lambda_{t,+}^{(1)}}(0)}\frac{16Y_0\widetilde{\zeta}_t\chi_t}{(1+|z|^2)^2}{d}z=o(1)\ \ \textrm{as}\ \ t\to0.
\end{aligned}\end{equation} So we obtain Lemma \ref{lem_intzetathree}-(iv).

(v) By Lemma \ref{lem_est_zetanj} and Lemma \ref{lem_intzetathree}-(iv), we have
\begin{equation}\label{b0}
  b_0\equiv0.
\end{equation}
So we complete the proof of Lemma \ref{lem_intzetathree}.
\end{proof}
Let \begin{equation}\label{tildeut}
      \widetilde{u}_t^{(i)}=u_t^{(i)}-\ln\int_Mhe^{u_t^{(i)}-G_t}{d}v_g\ \ \textrm{for}\ \ i=1,2.
    \end{equation}
We note that
\begin{equation}\label{equt1diffut2}
  \begin{aligned}
   \widetilde{u}_t^{(1)}-\widetilde{u}_t^{(2)} &=u_t^{(1)}-u_t^{(2)}-\ln\int_Mhe^{u_t^{(1)}-G_t}{d}v_g+\ln\int_Mhe^{u_t^{(2)}-G_t}{d}v_g
  \\&=u_t^{(1)}-u_t^{(2)}-\ln\int_Mhe^{u_t^{(1)}-G_t}{d}v_g +\ln\int_Mhe^{u_t^{(1)}-G_t}(1+u_t^{(2)}-u_t^{(1)}+O(|u_t^{(1)}-u_t^{(2)}|^2){d}v_g
  \\&= u_t^{(1)}-u_t^{(2)}-\frac{\int_Mhe^{u_t^{(1)}-G_t}(u_t^{(1)}-u_t^{(2)}){d}v_g}{\int_Mhe^{u_t^{(1)}-G_t} {d}v_g}+O(\|u_t^{(1)}-u_t^{(2)}\|^2_{L^\infty(M)}).\end{aligned}
\end{equation}
Let
\begin{equation}\label{at}
  A_t:=\int_{B_{2R_0 t}(tp_t^{(1)})}\frac{\rho h e^{-G_t}(e^{\widetilde{u}_t^{(1)}}-e^{\widetilde{u}_t^{(2)}})}{\|u_t^{(1)}-u_t^{(2)}\|_{L^\infty(M)}}{d}x=\int_{B_{2R_0 t}(tp_t^{(1)})}-\Delta \zeta_t{d}x.
\end{equation}
Without loss of generality, from now on, we  assume that
\begin{equation}\label{wemayassume}
  \nabla_x(8\pi R(x,0)+w(x))\Big|_{x=0}=0.
\end{equation}
Indeed, we can change the regular part of $G(x,0)$ locally such that\[8\pi R_{\textrm{new}}(x,0)=8\pi R_{\textrm{old}}(x,0)-\nabla_x(8\pi R_{\textrm{old}}(x,0)+w(x))\Big|_{x=0}\cdot x.\]
Now we shall improve Lemma \ref{lem_intzetathree}-(ii) by applying the arguments in \cite{ly2}.
\begin{lemma}
  \label{lem_estat}
  \begin{equation*}
    A_t=\int_{B_{2R_0t}(tp_t^{(1)})}-\Delta\zeta_t{d}x=O(t).
  \end{equation*}
\end{lemma}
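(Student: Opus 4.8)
\textbf{Proof proposal for Lemma \ref{lem_estat}.}

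The plan is to apply a Pohozaev-type identity on the ball $B_{2R_0 t}(tp_t^{(1)})$ to the difference equation satisfied by $\zeta_t$, but rather than multiplying by a radial vector field $\xi\cdot\nabla$ as in Lemma \ref{lem_diff_p_t}, I would use a \emph{scaling} vector field (i.e.\ pair the equation with $(x-tp_t^{(1)})\cdot\nabla\widetilde u_t^{(i)}+2$, adapted to the $6\ln t$ weight and the conformal factor), following the scheme in \cite{ly2}. Concretely, I would start from the two equations $\Delta_M u_t^{(i)}+\rho\big(\tfrac{h e^{u_t^{(i)}-G_t}}{\int_M h e^{u_t^{(i)}-G_t}}-1\big)=0$ and their local conformal reformulations \eqref{eq115}--\eqref{eq118}, form the difference, and recall from \eqref{equt1diffut2} and \eqref{at} that $-\Delta\zeta_t$ on $B_{2R_0t}(tp_t^{(1)})$ is (up to the normalization) $\rho h e^{-G_t}(e^{\widetilde u_t^{(1)}}-e^{\widetilde u_t^{(2)}})$. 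Passing to the scaled variable $y$ via \eqref{1-vt}, $A_t$ becomes $\int_{B_{2R_0}(p_t^{(1)})}\big(h_t e^{v_t^{(1)}}-h_t e^{v_t^{(2)}}\big)/\|u_t^{(1)}-u_t^{(2)}\|_{L^\infty}\,dy$, which by the expansions of Theorem D, Theorem E and Lemma \ref{lem_est_zetanj} is, to leading order, a constant multiple of $\int_{B_{2R_0\Lambda_{t,+}^{(1)}}(0)} \widetilde\zeta_t\,\tfrac{8}{(1+|z|^2)^2}\,dz$ plus lower-order terms; by Lemma \ref{lem_intzetathree}-(ii) this is already $O(t\ln t)$, so the content of the lemma is to remove the $\ln t$ and get the sharp $O(t)$.

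The key steps, in order: (1) write down the Pohozaev identity for the pair $(u_t^{(1)},u_t^{(2)})$ obtained by multiplying the difference equation by the scaling field and integrating over $B_{2R_0t}(tp_t^{(1)})$; the bulk term produces $A_t$ together with an error coming from the spatial variation of $h_1(x)|x-t\underline e|^2|x+t\underline e|^2$, and the boundary term produces quantities controlled by $\zeta_t$ and $\nabla\zeta_t$ on $\partial B_{2R_0t}(tp_t^{(1)})$. (2) Estimate the boundary term: use the Green's representation / Lemma \ref{ref} and Lemma \ref{lem_intzetathree}-(iii) to show $\zeta_t=o(1)$ and $|x-tp_t^{(1)}|\,|\nabla\zeta_t|=o(1)$ there, and crucially, because of the normalization \eqref{wemayassume} the first-order part of $8\pi R(x,0)+w(x)$ at the origin vanishes, so the leading boundary contribution that would have been $O(t\ln t)$ instead becomes $O(t)$. (3) Estimate the bulk error term: the gradient of $\ln h_1$ is $O(t)$ by \eqref{note}--\eqref{prop_of_h}, the gradient of $\ln(|x-t\underline e|^2|x+t\underline e|^2)$ evaluated near $tp_t^{(1)}$ is $O(1)$ but, after integrating against the (essentially odd) profile $\widetilde\zeta_t/(1+|z|^2)^2$ and using $\int_{\mathbb R^2}\tfrac{z_k}{(1+|z|^2)^2}\,dz=0$ together with $b_0=0$ from Lemma \ref{lem_intzetathree}-(v), the would-be logarithmic divergence cancels and leaves $O(t)$. (4) Combine: the identity reads $A_t=(\text{bulk error})+(\text{boundary term})=O(t)$.

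The main obstacle I expect is step (2)--(3), i.e.\ sharpening the already-known $O(t\ln t)$ bound to $O(t)$. The $\ln t$ arises from the logarithmic growth of $v_t^{(i)}$ (equivalently from the factor $\Lambda_{t,+}^{(1)}\sim e^{\lambda_t/2}$ entering the domain of integration) against the $O(t)$-size gradient of the regular data; removing it requires exploiting a cancellation. There are two cancellations available and I would need both: the vanishing of the first-order Taylor coefficient of $8\pi R(x,0)+w(x)$ at $0$ (guaranteed by the normalization \eqref{wemayassume}), and the vanishing of the first moment $\int \tfrac{z}{(1+|z|^2)^2}$ together with $b_0=0$, which kills the odd and the radial-logarithmic parts of the limiting integrand respectively. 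Keeping careful track of which error terms are genuinely $O(t)$ versus $O(t\ln t)$ through the change of variables $x=t(\Lambda_{t,-}^{(1)}z+p_t^{(1)})$ — in particular making sure the $O(t^2|z|^2)$ and $O(t|z|)$ terms in \eqref{eq_zeta_final}, when integrated over $|z|\le 2R_0\Lambda_{t,+}^{(1)}$, contribute only $O(t)$ and not $O(t\ln t)$ — is the delicate bookkeeping that the proof will hinge on.
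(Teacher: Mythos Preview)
Your overall strategy---a Pohozaev identity with the scaling field $z\cdot\nabla$, following \cite{ly2}---is exactly what the paper does. But your description of the mechanism contains a real gap.

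You write that the boundary term will be $O(t)$ (after using the normalization \eqref{wemayassume} and Green's representation), and that the bulk term is $A_t$ plus an $O(t)$ error, so that the identity reads $A_t=(\text{bulk error})+(\text{boundary term})=O(t)$. This is not how the identity works. The boundary term is \emph{not} $O(t)$: Green's representation for $\zeta_t$ (the paper carries this out in \eqref{zetagr}--\eqref{final_zetagr}) gives, on $|z|=2R_0\Lambda_{t,+}^{(1)}$,
\[
\nabla_z\widetilde\zeta_t(z)=-\frac{A_t}{2\pi}\frac{z}{|z|^2}+O(t^2),
\]
and pairing this with $\nabla_z(\widetilde v_t^{(1)}+\widetilde v_t^{(2)})=-8\,\dfrac{z}{|z|^2}+O(t^3\ln t)$ in the boundary quadratic form yields $-4A_t+O(t)$. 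Your claim that $|x-tp_t^{(1)}|\,|\nabla\zeta_t|=o(1)$ is true (since $A_t=O(t\ln t)$ from Lemma \ref{lem_intzetathree}(ii)) but too weak; using it to declare the boundary $O(t)$ is circular. On the other side, the bulk is not $A_t+(\text{error})$ but $-2A_t+O(t)$: the factor $(2+\nabla_z\ln h_t\cdot z)$ in the Pohozaev identity contributes $2\int\Delta\widetilde\zeta_t=-2A_t$ from the constant ``$2$'', while $\nabla_z\ln h_t\cdot z=O(t|z|)$ integrates against $(1+|z|^2)^{-2}$ to $O(t)$ directly---no cancellation, and in particular no need to invoke $b_0=0$.

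The actual argument is therefore: the identity reads $-4A_t+O(t)=-2A_t+O(t)$, and it is the \emph{mismatch of coefficients} ($-4$ versus $-2$) that gives $A_t=O(t)$. If you treat the boundary as a small error you never see the $-4A_t$, and the best you can recover is the $O(t\ln t)$ bound you already had. The computation of $\nabla\zeta_t$ on the boundary via Green's representation, keeping track of the $A_t$-singular part, is the step your outline is missing.
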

\begin{proof}
Recall that \begin{equation}\begin{aligned}\label{hatvt}
      {v}_t^{(i)}(y)&=\widetilde{u}_t^{(i)}(ty)+6\ln t -\bar{\varphi}(ty)
      \\&=\eta_t^{(i)}(y)+I_t^{(i)}(y)+G_{*,t}^{(i)}(ty)-G_{*,t}^{(i)}(tp_t^{(i)})\ \ \textrm{for}\ \ i=1,2.\end{aligned}
    \end{equation}Set \begin{equation}\label{widetildv}
                        \widetilde{v}_t^{(i)}(z)=v_t^{(i)}(\Lambda_{t,-}^{(1)}z+p_t^{(1)}) \ \ \textrm{for}\ \ i=1,2.
                      \end{equation}Then
    \begin{equation}\begin{aligned}\label{tilvt}
      \widetilde{v}_t^{(i)}(z)&= \eta_t^{(i)}(\Lambda_{t,-}^{(1)}z+p_t^{(1)})
      +\ln \frac{e^{\lambda_t^{(i)}}}{(1+(\Lambda_{t,+}^{(i)})^2|\Lambda_{t,-}^{(1)}z+p_t^{(1)}
      -q_t^{(i)})|^2)^2} +G_{*,t}^{(i)}(t\Lambda_{t,-}^{(1)}z+tp_t^{(1)})-G_{*,t}^{(i)}(tp_t^{(i)}).\end{aligned}
    \end{equation}
    We also see from \eqref{equt1diffut2} that
    \begin{equation}\label{difftilv}\begin{aligned}
      \frac{\widetilde{v}_t^{(1)}(z)-\widetilde{v}_t^{(2)}(z)}{\|u_t^{(1)}-u_t^{(2)}\|_{L\infty(M)}}&=\frac{\widetilde{u}_t^{(1)}(t\Lambda_{t,-}^{(1)}z+tp_t^{(1)})
      -\widetilde{u}_t^{(2)}(t\Lambda_{t,-}^{(1)}z+tp_t^{(1)})}{\|u_t^{(1)}-u_t^{(2)}\|_{L\infty(M)}}\\&=\widetilde{\zeta}_t(z)+O(\|u_t^{(1)}-u_t^{(2)}\|_{L^\infty(M)}),
    \end{aligned}\end{equation}
    which implies
    \begin{equation}\label{difftilv0}\begin{aligned}
      \frac{1-e^{\widetilde{v}_t^{(2)}-\widetilde{v}_t^{(1)}}}{\|u_t^{(1)}-u_t^{(2)}\|_{L\infty(M)}}&=\frac{\widetilde{v}_t^{(1)}(z)
      -\widetilde{v}_t^{(2)}(z)+O(|\widetilde{v}_t^{(1)}-\widetilde{v}_t^{(2)}|^2)}{\|u_t^{(1)}-u_t^{(2)}\|_{L\infty(M)}} =\widetilde{\zeta}_t(z)+O(\|u_t^{(1)}-u_t^{(2)}\|_{L^\infty(M)}).
    \end{aligned}\end{equation}
We have
\begin{equation}\label{eqtilv}
  \begin{aligned}
  \Delta_z\widetilde{v}_t^{(i)}(z)
  +(\Lambda_{t,-}^{(1)})^2h_t(\Lambda_{t,-}^{(1)}z+p_t^{(1)})e^{\widetilde{v}_t^{(i)}(z)}=0,
  \end{aligned}
\end{equation}where $h_t(y)=\rho h(ty)|y-\underline{e}|^2|y+\underline{e}|^2e^{-R_t(ty)+\psi(ty)}.$
We see that
\begin{equation}\label{eqtilv1}
  \begin{aligned}
  &(\Delta(\widetilde{v}_t^{(1)}-\widetilde{v}_t^{(2)}))(\nabla(\widetilde{v}_t^{(1)}+\widetilde{v}_t^{(2)})\cdot z)+
   (\Delta(\widetilde{v}_t^{(1)}+\widetilde{v}_t^{(2)}))(\nabla(\widetilde{v}_t^{(1)}-\widetilde{v}_t^{(2)})\cdot z)
 \\&=\mbox{div}\Big\{(\nabla(\widetilde{v}_t^{(1)}-\widetilde{v}_t^{(2)}))(\nabla(\widetilde{v}_t^{(1)}+\widetilde{v}_t^{(2)})\cdot z)\\&+(\nabla(\widetilde{v}_t^{(1)}+\widetilde{v}_t^{(2)}))(\nabla(\widetilde{v}_t^{(1)}-\widetilde{v}_t^{(2)})\cdot z)-\nabla(\widetilde{v}_t^{(1)}-\widetilde{v}_t^{(2)})\cdot\nabla(\widetilde{v}_t^{(1)}+\widetilde{v}_t^{(2)}) z\Big\},
  \end{aligned}
\end{equation}and
\begin{equation}\label{eqtilv2}
  \begin{aligned}
  &(\Delta(\widetilde{v}_t^{(1)}-\widetilde{v}_t^{(2)}))(\nabla(\widetilde{v}_t^{(1)}+\widetilde{v}_t^{(2)})\cdot z)+
   (\Delta(\widetilde{v}_t^{(1)}+\widetilde{v}_t^{(2)}))(\nabla(\widetilde{v}_t^{(1)}-\widetilde{v}_t^{(2)})\cdot z)
 \\&= -(\Lambda_{t,-}^{(1)})^2h_t(\Lambda_{t,-}^{(1)}z+p_t^{(1)})(e^{\widetilde{v}_t^{(1)}(z)}-e^{\widetilde{v}_t^{(2)}(z)})(\nabla(\widetilde{v}_t^{(1)}+\widetilde{v}_t^{(2)})\cdot z)
 \\&  -(\Lambda_{t,-}^{(1)})^2h_t(\Lambda_{t,-}^{(1)}z+p_t^{(1)})(e^{\widetilde{v}_t^{(1)}(z)}+e^{\widetilde{v}_t^{(2)}(z)})(\nabla(\widetilde{v}_t^{(1)}-\widetilde{v}_t^{(2)})\cdot z)
\\&=-\mbox{div}\left(2(\Lambda_{t,-}^{(1)})^2h_t(\Lambda_{t,-}^{(1)}z+p_t^{(1)})(e^{\widetilde{v}_t^{(1)}(z)}-e^{\widetilde{v}_t^{(2)}(z)})z\right)
\\&+4(\Lambda_{t,-}^{(1)})^2h_t(\Lambda_{t,-}^{(1)}z+p_t^{(1)})(e^{\widetilde{v}_t^{(1)}(z)}-e^{\widetilde{v}_t^{(2)}(z)})
\\&+2(\Lambda_{t,-}^{(1)})^2h_t(\Lambda_{t,-}^{(1)}z+p_t^{(1)})(e^{\widetilde{v}_t^{(1)}(z)}-e^{\widetilde{v}_t^{(2)}(z)})\left(\nabla_z\ln h_t(\Lambda_{t,-}^{(1)}z+p_t^{(1)})\cdot z\right).\end{aligned}
\end{equation}
Therefore, we obtain for any $r>0$,
\begin{equation}\label{eqtilv3}
  \begin{aligned} &\frac{1}{2}\int_{\partial B_r(0)}\nabla (\widetilde{v}_t^{(1)}-\widetilde{v}_t^{(2)})\cdot \nabla (\widetilde{v}_t^{(1)}+\widetilde{v}_t^{(2)})|z|{d}\sigma
   -\int_{\partial B_r(0)}\frac{(\nabla (\widetilde{v}_t^{(1)}-\widetilde{v}_t^{(2)})\cdot z)(\nabla (\widetilde{v}_t^{(1)}+\widetilde{v}_t^{(2)})\cdot z)}{|z|}{d}\sigma
\\&=\int_{\partial B_r(0)}(\Lambda_{t,-}^{(1)})^2h_t(\Lambda_{t,-}^{(1)}z+p_t^{(1)}) e^{\widetilde{v}_t^{(1)}(z)}(1-e^{\widetilde{v}_t^{(2)}(z)-\widetilde{v}_t^{(1)}(z)})|z|{d}\sigma\\&-\int_{ B_r(0)}(\Lambda_{t,-}^{(1)})^2h_t(\Lambda_{t,-}^{(1)}z+p_t^{(1)}) e^{\widetilde{v}_t^{(1)}(z)}(1-e^{\widetilde{v}_t^{(2)}(z)-\widetilde{v}_t^{(1)}(z)}) \left(2+\nabla_z\ln h_t(\Lambda_{t,-}^{(1)}z+p_t^{(1)})\cdot z\right){d}z.\end{aligned}
\end{equation}
Let $  2R_0\Lambda_{t,+}^{(1)}\le |z|\le \frac{r_0}{t}\Lambda_{t,+}^{(1)}$. By \eqref{note} and Theorem D, we have \[\nabla_z\bar{\varphi}(t\Lambda_{t,-}^{(1)}z+tp_t^{(1)})=t^2O(t\Lambda_{t,-}^{(1)}z+tp_t^{(1)})=O(t^4(|z|+1)),\]and
\begin{equation}\label{gradtilv}
  \begin{aligned}  \nabla_z\widetilde{v}_t^{(i)}(z) &=\nabla_z\left(\widetilde{u}_t^{(i)}(t\Lambda_{t,-}^{(1)}z+tp_t^{(1)}) -
  \bar{\varphi}(t \Lambda_{t,-}^{(1)}z+t p_t^{(1)})\right)
  \\&=\nabla_z\widetilde{\phi}_t^{(i)}(t\Lambda_{t,-}^{(1)}z+tp_t^{(1)}) +\nabla_z\left(\rho_t^{(i)}G(t\Lambda_{t,-}^{(1)}z+tp_t^{(1)},tp_t^{(i)})+w(t\Lambda_{t,-}^{(1)}z+tp_t^{(1)})\right)
  +O(t^4|z|)
  \\&=\nabla_z\widetilde{\phi}_t^{(i)}(t\Lambda_{t,-}^{(1)}z+tp_t^{(1)})-\frac{\rho_t^{(i)}t\Lambda_{t,-}^{(1)}}{2\pi}
  \frac{(t\Lambda_{t,-}^{(1)}z+tp_t^{(1)}-tp_t^{(i)})}{|t\Lambda_{t,-}^{(1)}z+tp_t^{(1)}-tp_t^{(i)}|^2}
  \\&+\nabla_z\left(\rho_t^{(i)}R(t\Lambda_{t,-}^{(1)}z+tp_t^{(1)},tp_t^{(i)})+w(t\Lambda_{t,-}^{(1)}z+tp_t^{(1)})\right)+O(t^4|z|)
  \\&=-\frac{\rho_t^{(i)}}{2\pi}
  \frac{(z+\Lambda_{t,+}^{(1)}(p_t^{(1)}-p_t^{(i)}))}{|z+\Lambda_{t,+}^{(1)}(p_t^{(1)}-p_t^{(i)})|^2}+O(t^2\|\nabla\widetilde{\phi}_t\|_{L^\infty(M\setminus B_{2R_0t}(tp_t^{(1)}))})+O(t^4|z|)\\&+O(t^2)\nabla_x\left(\rho_t^{(i)}R(x,tp_t^{(i)})+w(x)\right)\Big|_{x=t\Lambda_{t,-}^{(1)}z+ tp_t^{(1)}}.\end{aligned}
\end{equation}
In view of Lemma \ref{lem_diff_p_t} and Theorem D, we see that there are $a_t^{(i)}\in\mathbb{R}^2$ such that $a_t^{(1)}=0$, $|a_t^{(2)}|=O(t\ln t)$, and
\begin{equation}\label{gradtilv2}
  \begin{aligned}  \nabla_z\widetilde{v}_t^{(i)}(z) &= -4\frac{z+a_t^{(i)}}{|z+a_t^{(i)}|^2}+O\left(t^2\nabla_x\left(\rho_t^{(1)}R(x,tp_t^{(1)})+w(x)\right)\Big|_{x=tp_t^{(1)}}\right)\\& +O(t^3\ln t) +O(t^4|z|)\ \ \ \ \ \textrm{for}\ \ \ 2R_0\Lambda_{t,+}^{(1)}\le |z|\le \frac{r_0}{t}\Lambda_{t,+}^{(1)}.\end{aligned}
\end{equation}
In view of \eqref{wemayassume} and Theorem D, we have $\nabla_x(\rho_t^{(1)}R(x,tp_t^{(1)})+w(x))\Big|_{x=tp_t^{(1)}}=O(t^2\ln t)$, and get that if $2R_0\Lambda_{t,+}^{(1)}\le |z|\le \frac{r_0}{t}\Lambda_{t,+}^{(1)}$, then
\begin{equation}\label{gradtilv3}
  \begin{aligned} \nabla_z\widetilde{v}_t^{(i)}(z)&= -4\frac{(z+a_t^{(i)})}{|z+a_t^{(i)}|^2}+O(t^{3}\ln t)+O(t^4|z|)
  \\&= -4\frac{z }{|z |^2}+O(\frac{|a_t^{(i)}|}{|z|^2})+O(t^3\ln t)+O(t^4|z|)
  \\&= -4\frac{z }{|z |^2} +O(t^3\ln t)+O(t^4|z|).\end{aligned}
\end{equation}
From \eqref{int0}, we recall $\int_M\zeta_t {d}v_g=0$.
Together with Green's representation formula, we have
\begin{equation*}
  \zeta_t(x)=\int_M\rho he^{-G_t}\frac{(e^{\widetilde{u}_t^{(1)}}-e^{\widetilde{u}_t^{(2)}})}{\|u_t^{(1)}-u_t^{(2)}\|_{L^\infty(M)}}G(x,y){d}y,
\end{equation*}
and thus  \begin{equation}\label{zetagr}\begin{aligned}
   \nabla_x\zeta_t(x) &=\int_{M\setminus B_{2R_0t}(tp_t^{(1)})}\rho he^{-G_t}\frac{(e^{\widetilde{u}_t^{(1)}}-e^{\widetilde{u}_t^{(2)}})}{\|u_t^{(1)}-u_t^{(2)}\|_{L^\infty(M)}}\nabla_xG(x,y){d}y
  \\&+\nabla_xG(x,tp_t^{(1)})\int_{B_{2R_0t}(tp_t^{(1)})}\rho he^{-G_t}\frac{(e^{\widetilde{u}_t^{(1)}}-e^{\widetilde{u}_t^{(2)}})}{\|u_t^{(1)}-u_t^{(2)}\|_{L^\infty(M)}} {d}y
  \\&+\int_{B_{2R_0t}(tp_t^{(1)})}\rho he^{-G_t}\frac{(e^{\widetilde{u}_t^{(1)}}-e^{\widetilde{u}_t^{(2)}})}{\|u_t^{(1)}-u_t^{(2)}\|_{L^\infty(M)}}(\nabla_xG(x,y)-\nabla_xG(x,tp_t^{(1)})) {d}y
  \\&:=I+II+III.\end{aligned}
\end{equation}
  From Lemma \ref{lem_equalb0}-(ii) and Lemma \ref{lem_intzetathree}-(iii), we see that if $x\in M\setminus B_{2R_0t}(tp_t^{(1)})$,
  then \begin{equation}\begin{aligned}\label{I}
         I&=\int_{M\setminus B_{2R_0t}(tp_t^{(1)})}\nabla_xG(x,y)\frac{\rho he^{-G_t+\widetilde{u}_t^{(1)}}}{\|u_t^{(1)}-u_t^{(2)}\|_{L^\infty(M)}}\\&\times\Big(u_t^{(1)}-u_t^{(2)}-\frac{\int_M he^{u_t^{(1)}-G_t}(u_t^{(1)}-u_t^{(2)}){d}v_g}{\int_Mhe^{u_t^{(1)}-G_t}{d}v_g} +O( \|u_t^{(1)}-u_t^{(2)}\|_{L^\infty(M)}^2)\Big){d}y
         \\&=\int_{M\setminus B_{2R_0t}(tp_t^{(1)})}\nabla_xG(x,y)\rho he^{-G_t+\widetilde{u}_t^{(1)}} \Big(\zeta_t-\frac{\int_M he^{u_t^{(1)}-G_t}\zeta_t{d}v_g}{\int_Mhe^{u_t^{(1)}-G_t}{d}v_g}+O( \|u_t^{(1)}-u_t^{(2)}\|_{L^\infty(M)})\Big){d}y\\&=o(1)\ \ \ \ \textrm{as}\ \ t\to0.
      \end{aligned} \end{equation}
    From Lemma \ref{lem_intzetathree}-(ii), we have  \[A_t=-\int_{B_{2R_0t}(tp_t^{(1)})}\Delta\zeta_t{d}x=-\int_{B_{2R_0\Lambda_{t,+}^{(1)}}(0)}\Delta\widetilde{\zeta}_t{d}z=O(t\ln t).\] Then
      we see that  if $x\in M\setminus B_{2R_0t}(tp_t^{(1)})$,
  then \begin{equation}\begin{aligned}\label{II}
         II&= \nabla_x G(x,tp_t^{(1)})A_t=\left\{-\frac{1}{2\pi}\frac{(x-tp_t^{(1)})1_{B_{r_0}(tp_t^{(1)})}(x)}{|x-tp_t^{(1)}|^2}+O(1)\right\}A_t,
                 \end{aligned} \end{equation}
                 where \begin{equation}
\label{1br0}
1_{B_{r_0}(tp_t^{(1)})}(x)=\left\{\begin{array}{l}
1\ \ \textrm{if}\ \ x\in  B_{r_0}(tp_t^{(1)}),\\
0\ \ \textrm{if}\ \ x\in M\setminus B_{r_0}(tp_t^{(1)}).
\end{array}\right.
\end{equation}
      Now we also see that if $x\in M\setminus B_{2R_0t}(tp_t^{(1)})$,
  then \begin{equation}\begin{aligned}\label{III9}
          III &=\int_{B_{2R_0t}(tp_t^{(1)})} \frac{\rho he^{-G_t}(e^{\widetilde{u}_t^{(1)}}-e^{\widetilde{u}_t^{(2)}})}{2\pi\|u_t^{(1)}-u_t^{(2)}\|_{L^\infty(M)}}\Big(\frac{x-tp_t^{(1)}}{|x-tp_t^{(1)}|^2}-\frac{x-y}{|x-y|^2}\Big)1_{B_{r_0}(tp_t^{(1)})}(x){d}y
         +o(1)
         \\&=-\frac{1}{2\pi}\int_{B_{2R_0t}(tp_t^{(1)})}\Delta\zeta_t\Big(\frac{x-tp_t^{(1)}}{|x-tp_t^{(1)}|^2}-\frac{x-y}{|x-y|^2}\Big)1_{B_{r_0}(tp_t^{(1)})}(x){d}y +o(1),
      \end{aligned} \end{equation}
and
      \begin{equation}\begin{aligned}\label{III}
         &\int_{B_{2R_0t}(tp_t^{(1)})}\Delta\zeta_t\Big(\frac{x-tp_t^{(1)}}{|x-tp_t^{(1)}|^2}-\frac{x-y}{|x-y|^2}\Big)1_{B_{r_0}(tp_t^{(1)})}(x){d}y
         \\&=\left( \int_{B_{R_0\Lambda_{t,+}^{(1)}}(0)}+\int_{B_{2R_0\Lambda_{t,+}^{(1)}}(0)\setminus B_{R_0\Lambda_{t,+}^{(1)}}(0)}\right)\Delta_z\widetilde{\zeta}_t(z) \Big(\frac{x-tp_t^{(1)}}{|x-tp_t^{(1)}|^2}
         -\frac{x-tp_t^{(1)}-t\Lambda_{t,-}^{(1)}z}{|x-tp_t^{(1)}-t\Lambda_{t,-}^{(1)}z|^2}\Big)1_{B_{r_0}(tp_t^{(1)})}(x){d}z
         \\&= \int_{B_{R_0\Lambda_{t,+}^{(1)}}(0)}|\Delta_z\widetilde{\zeta}_t(z)|O \Big(\frac{t\Lambda_{t,-}^{(1)}|z|}{|x-tp_t^{(1)}|^2}\Big)1_{B_{r_0}(tp_t^{(1)})}(x){d}z
         \\&
+\int_{B_{2R_0\Lambda_{t,+}^{(1)}}(0)\setminus B_{R_0\Lambda_{t,+}^{(1)}}(0)}\frac{O(1)}{|z|^4}\left(\frac{1}{|x-tp_t^{(1)}|}+\frac{1}{|x-tp_t^{(1)}-t\Lambda_{t,-}^{(1)}z|}\right) 1_{B_{r_0}(tp_t^{(1)})}(x){d}z         \\&=\int_{B_{R_0\Lambda_{t,+}^{(1)}}(0)}O(\frac{1}{(1+|z|^2)^2})\Big(\frac{t^2|z|}{|x-tp_t^{(1)}|^2}\Big)1_{B_{r_0}(tp_t^{(1)})}(x){d}z
\\&+\int_{B_{2R_0\Lambda_{t,+}^{(1)}}(0)\setminus B_{R_0\Lambda_{t,+}^{(1)}}(0)}\frac{O(1)}{|z|^4}\left(O(t^{-1})+\frac{O(t^{-2})}{\Big|\frac{x-tp_t^{(1)}}{t\Lambda_{t,-}^{(1)}}-z\Big|}\right) 1_{B_{r_0}(tp_t^{(1)})}(x){d}z
          \\&=O(\frac{t^21_{B_{r_0}(tp_t^{(1)})}(x)}{|x-tp_t^{(1)}|^2}) +o(1)\ \ \textrm{as}\ \ t\to0.
      \end{aligned} \end{equation}
From \eqref{I}-\eqref{III}, we see that if $x\in M\setminus B_{2R_0t}(tp_t^{(1)})$,
\begin{equation}\label{final_zetagr}\begin{aligned}
  \nabla_x\zeta_t(x)&=-\frac{A_t}{2\pi}\frac{(x-tp_t^{(1)})1_{B_{r_0}(tp_t^{(1)})}(x)}{|x-tp_t^{(1)}|^2}
 +O(\frac{t^21_{B_{r_0}(tp_t^{(1)})}(x)}{|x-tp_t^{(1)}|^2}) +o(1)\ \ \textrm{as}\ \ t\to0.\end{aligned}
\end{equation}
Here we also note that if $2R_0\Lambda_{t,+}^{(1)}\le |z|\le \frac{2r_0\Lambda_{t,+}^{(1)}}{t}$, then
\begin{equation}\label{diffgrad}\begin{aligned}
 \frac{\nabla_z(\widetilde{v}_t^{(1)}(z)-\widetilde{v}_t^{(2)}(z))}{\|u_t^{(1)}-u_t^{(2)}\|_{L^\infty(M)}} &=\frac{\nabla_z(\widetilde{u}_t^{(1)}(t\Lambda_{t,-}^{(1)}z+tp_t^{(1)})
-\widetilde{u}_t^{(2)}(t\Lambda_{t,-}^{(1)}z+tp_t^{(1)}))}{\|u_t^{(1)}-u_t^{(2)}\|_{L^\infty(M)}}
\\&=\nabla_z\zeta_t(t\Lambda_{t,-}^{(1)}z+tp_t^{(1)})
\\&=\nabla_z\widetilde{\zeta}_t(z)=t\Lambda_{t,-}^{(1)}\left(-\frac{A_t}{2\pi}\frac{1}{t\Lambda_{t,-}^{(1)}}\frac{z}{|z|^2}+O(1)\right)
=-\frac{A_t}{2\pi}\frac{z}{|z|^2}+O(t^2),
\end{aligned}
\end{equation}and
\begin{equation}\label{diffgrad2}\begin{aligned}
&\nabla_z\widetilde{\zeta}_t(z)\Big|_{z\in \partial B_{2R_0\Lambda_{t,+}^{(1)}}(0) } =t\Lambda_{t,-}^{(1)}\nabla_x\zeta_t(x)\Big|_{x\in \partial B_{2R_0t}(tp_t^{(1)})}.
\end{aligned}
\end{equation}
By \eqref{eqtilv3}, we obtain for $r_{t,R_0}=2R_0\Lambda_{t,+}^{(1)}$,
\begin{equation}\label{eq_tilv}
  \begin{aligned} &\frac{1}{2}\int_{\partial B_{r_{t,R_0}}(0)}\nabla (\widetilde{\zeta}_t)\cdot \nabla (\widetilde{v}_t^{(1)}+\widetilde{v}_t^{(2)})|z|{d}\sigma
 -\int_{\partial  B_{r_{t,R_0}}(0)}\frac{(\nabla (\widetilde{\zeta}_t)\cdot z)(\nabla (\widetilde{v}_t^{(1)}+\widetilde{v}_t^{(2)})\cdot z)}{|z|}{d}\sigma
\\&=\int_{\partial  B_{r_{t,R_0}}(0)}\frac{(\Lambda_{t,-}^{(1)})^2h_t(\Lambda_{t,-}^{(1)}z+p_t^{(1)}) }{\|u_t^{(1)}-u_t^{(2)}\|_{L^\infty(M)}} e^{\widetilde{v}_t^{(1)}(z)}(1-e^{\widetilde{v}_t^{(2)}(z)-\widetilde{v}_t^{(1)}(z)})|z|{d}\sigma\\&-\int_{ B_{r_{t,R_0}}(0)}\frac{(\Lambda_{t,-}^{(1)})^2h_t(\Lambda_{t,-}^{(1)}z+p_t^{(1)}) }{\|u_t^{(1)}-u_t^{(2)}\|_{L^\infty(M)}} e^{\widetilde{v}_t^{(1)}(z)}(1-e^{\widetilde{v}_t^{(2)}(z)-\widetilde{v}_t^{(1)}(z)})  \left(2+\nabla_z\ln h_t(\Lambda_{t,-}^{(1)}z+p_t^{(1)})\cdot z\right){d}z.\end{aligned}
\end{equation}
We see from \eqref{gradtilv3} and \eqref{diffgrad} that
\begin{equation}\label{eq_tilvl}
  \begin{aligned}  \mbox{(LHS) of } \eqref{eq_tilv}
   &=\frac{1}{2}\int_{\partial B_{r_{t,R_0}}(0)}|z|\left(-8\frac{z}{|z|^2}+O(t^3\ln t)\right)\cdot \left(-\frac{A_t}{2\pi}\frac{z}{|z|^2}+O(t^2)\right) {d}\sigma
  \\&-\int_{\partial B_{r_{t,R_0}}(0)}\frac{1}{|z|}\left(-\frac{A_t}{2\pi}+O(t)\right)\left(-8+O(t^2\ln t)\right) d\sigma
  \\&=\frac{1}{2}\int_{\partial B_{r_{t,R_0}}(0)}|z|\left(-8\frac{z}{|z|^2}+O(t^3\ln t)\right)\cdot \left(-\frac{A_t}{2\pi}\frac{z}{|z|^2}+O(t^2)\right) {d}\sigma
  \\&-\int_{\partial B_{r_{t,R_0}}(0)}\frac{1}{|z|}\left(\frac{4A_t }{\pi}+O(t)\right) d\sigma
  \\&=\frac{1}{2}\int_{\partial B_{r_{t,R_0}}(0)}|z|\left(\frac{4A_t}{\pi|z|^2}+O(t^{3}) \right) {d}\sigma
-\int_{\partial B_{r_{t,R_0}}(0)}\frac{1}{|z|}\left(\frac{4A_t }{\pi}\right) d\sigma+O(t)
  \\&=-4A_t +O(t).
  \end{aligned}
\end{equation}
We also see from \eqref{difftilv0}-\eqref{eqtilv} and \eqref{tilvt} that
\begin{equation}\label{eq_tilvr}
  \begin{aligned}   \mbox{(RHS) of } \eqref{eq_tilv} &=
\int_{B_{r_{t,R_0}}(0)}\frac{2(\Delta \widetilde{v}_t^{(1)}-\Delta \widetilde{v}_t^{(2)})}{\|u_t^{(1)}-u_t^{(2)}\|_{L^\infty(M)}} +\frac{O(t|z|)}{(1+|z|^2)^2} dz+\int_{\partial B_{r_{t,R_0}}(0)}O(\frac{1}{|z|^3}){d}\sigma
\\&=2\int_{B_{r_{t,R_0}}(0)} \Delta_z \widetilde{\zeta}_t(z) dz+O(t)\\&=2\int_{B_{2R_0t}(tp_t^{(1)})} \Delta_x \zeta_t(x) dx+O(t)=-2A_t+O(t).\end{aligned}
\end{equation}
By \eqref{eq_tilvl}-\eqref{eq_tilvr}, we obtain
$A_t=O(t),$ and complete the proof of Lemma \ref{lem_estat}.
\end{proof}
For any function $f$, we denote
\begin{equation}\label{deril}
  D_l f(z)=\frac{\partial f (z)}{\partial z_l}\ \ \textrm{for} \ \ l=1,2.
\end{equation}
\begin{lemma}\label{lem_bj0}
  (i) $b_1=b_2=0$,

  (ii) $\widetilde{\zeta}_t(z)\to0,$  $\zeta_t(t\Lambda_{t,-}^{(1)}z+tp_t^{(1)})\to0$ in $C_{\textrm{loc}}^0(\mathbb{R}^2)$ as $t\to0$,

  (iii) $\lim_{t\to0}\Bigg(\int_{B_{2R_0\Lambda_{t,+}^{(1)}}(0)}(\ln |z|)\Delta\widetilde{\zeta}_t{d}z\Bigg)=0$.
\end{lemma}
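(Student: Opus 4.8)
The plan is to establish (i) by a translation‑type Pohozaev identity (in the spirit of \cite{ly2}); parts (ii) and (iii) then follow immediately. For (i) the crucial point is to work on the \emph{fixed} coordinate ball $B_{r_0}(0)$, not on the rescaled ball $B_{2R_0\Lambda_{t,+}^{(1)}}(0)$ — on the latter the identity is satisfied to leading order by every element of $\ker L$ and carries no information about $b_1,b_2$. Recall that $\bar{u}_t^{(i)}$ solves $\Delta\bar{u}_t^{(i)}+K_te^{\bar{u}_t^{(i)}}=0$ in $B_{r_0}(0)$ with $K_t(x)=h_1(x)|x-t\underline{e}|^2|x+t\underline{e}|^2$, and that $K_t(e^{\bar{u}_t^{(1)}}-e^{\bar{u}_t^{(2)}})=-\|u_t^{(1)}-u_t^{(2)}\|_{L^\infty(M)}\Delta_x\zeta_t$. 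Writing $A_t^\pm=\bar{u}_t^{(1)}\pm\bar{u}_t^{(2)}$, I would multiply the equation for $A_t^-$ by $D_l A_t^+$, the equation for $A_t^+$ by $D_l A_t^-$, add them, use the algebraic identity $(e^{\bar{u}_t^{(1)}}-e^{\bar{u}_t^{(2)}})D_l A_t^+ +(e^{\bar{u}_t^{(1)}}+e^{\bar{u}_t^{(2)}})D_l A_t^-=2D_l(e^{\bar{u}_t^{(1)}}-e^{\bar{u}_t^{(2)}})$, integrate by parts over $B_{r_0}(0)$, and divide by $\|u_t^{(1)}-u_t^{(2)}\|_{L^\infty(M)}$, obtaining for $l=1,2$ an identity $\mathrm{(I)}=\mathrm{(II)}+\mathrm{(III)}$ with
\begin{align*}
\mathrm{(I)}&:=\int_{\partial B_{r_0}}\big[(\partial_\nu\zeta_t)D_l A_t^+ +(\partial_\nu A_t^+)D_l\zeta_t-(\nabla\zeta_t\cdot\nabla A_t^+)\nu_l\big]\,{d}\sigma,\\
\mathrm{(II)}&:=-\frac{2}{\|u_t^{(1)}-u_t^{(2)}\|_{L^\infty(M)}}\int_{\partial B_{r_0}}\nu_l K_t(e^{\bar{u}_t^{(1)}}-e^{\bar{u}_t^{(2)}})\,{d}\sigma,\\
\mathrm{(III)}&:=2\int_{B_{r_0}}(D_l\ln K_t)(-\Delta_x\zeta_t)\,{d}x.
\end{align*}

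Next I would show $\mathrm{(I)}\to0$ and $\mathrm{(II)}\to0$. By Lemma~\ref{lem_equalb0}(i), $\zeta_t\to0$ in $C^0_{\textrm{loc}}(M\setminus\{0\})$; since the right‑hand side of the linear equation for $\zeta_t$ is bounded on $M\setminus B_{r_0/2}(0)$, elliptic estimates upgrade this to $\zeta_t\to0$ in $C^1_{\textrm{loc}}(M\setminus\{0\})$, so $\nabla\zeta_t\to0$ uniformly on $\partial B_{r_0}$ while $\nabla A_t^+$ stays bounded there; hence $\mathrm{(I)}\to0$. On $\partial B_{r_0}$ one also has $\frac{e^{\bar{u}_t^{(1)}}-e^{\bar{u}_t^{(2)}}}{\|u_t^{(1)}-u_t^{(2)}\|_{L^\infty(M)}}=-e^{\bar{u}_t^{(1)}}\zeta_t(1+o(1))\to0$ uniformly and $K_t$ is bounded, so $\mathrm{(II)}\to0$.

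For $\mathrm{(III)}$ the key is that it concentrates in $B_{2R_0t}(tp_t^{(1)})$. Since $K_te^{\bar{u}_t^{(i)}}=e^{2\varphi}\rho\,he^{u_t^{(i)}-G_t}/\!\int_Mhe^{u_t^{(i)}-G_t}{d}v_g$ is uniformly bounded on $B_{r_0}\setminus B_{2R_0t}(tp_t^{(1)})$, Lemma~\ref{lem_intzetathree}(iii) gives $-\Delta_x\zeta_t\to0$ uniformly there while $\int_{B_{r_0}\setminus B_{2R_0t}(tp_t^{(1)})}|D_l\ln K_t|\,{d}x=O(1)$, so that part of $\mathrm{(III)}$ is $o(1)$. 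On $B_{2R_0t}(tp_t^{(1)})$ I would rescale $x=t\Lambda_{t,-}^{(1)}z+tp_t^{(1)}$ — which turns $\int_{B_{2R_0t}(tp_t^{(1)})}(D_l\ln K_t)(-\Delta_x\zeta_t)\,{d}x$ into $\int_{B_{2R_0\Lambda_{t,+}^{(1)}}(0)}(D_l\ln K_t)(t\Lambda_{t,-}^{(1)}z+tp_t^{(1)})(-\Delta_z\widetilde{\zeta}_t)\,{d}z$ — and use $D_l\ln K_t(ty)=\tfrac1tD_{y_l}\ln h_t(y)$ together with the Taylor expansion $\nabla_y\ln h_t(y)=\big(\nabla^2_y\ln(|y-\underline{e}|^2|y+\underline{e}|^2)\big)\big|_{y=0}\,y+O(|y|^2)+O(t)$ and $-\Delta_z\widetilde{\zeta}_t=\big(8\widetilde{\zeta}_t+O(t\ln t)+O(t^2|z|^2)+O(t^2|z|)\big)/(1+|z|^2)^2$ from \eqref{eq_zeta_final}. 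Since $|\widetilde{\zeta}_t|\le2$, $\widetilde{\zeta}_t\to\widetilde{\zeta}_0=b_0Y_0+b_1Y_1+b_2Y_2$ in $C^0_{\textrm{loc}}(\mathbb{R}^2)$ (Lemma~\ref{lem_est_zetanj}), and $\Lambda_{t,-}^{(1)}/t\to\kappa$ for some constant $\kappa>0$ (by Theorem D(ii) and $C_t^{(1)}\to h_1(0)/8>0$), a direct estimate of the error terms plus dominated convergence for the principal part — using $\int_{\mathbb{R}^2}Y_i/(1+|z|^2)^2\,{d}z=0$ for $i=0,1,2$ (which annihilates the $z$‑independent part of $D_l\ln K_t$) and $\int_{\mathbb{R}^2}z_jY_k/(1+|z|^2)^2\,{d}z=\tfrac{\pi}{4}\delta_{jk}$ — gives
\[\mathrm{(III)}\ \longrightarrow\ 4\pi\kappa\Big[\big(\nabla^2_y\ln(|y-\underline{e}|^2|y+\underline{e}|^2)\big)\big|_{y=0}\,b\Big]_l,\qquad b:=(b_1,b_2).\]
Letting $t\to0$ in $\mathrm{(I)}=\mathrm{(II)}+\mathrm{(III)}$ then forces $\big(\nabla^2_y\ln(|y-\underline{e}|^2|y+\underline{e}|^2)\big)\big|_{y=0}\,b=0$; as this Hessian is invertible (exactly as used in the proof of Lemma~\ref{lem_diff_p_t}), $b_1=b_2=0$, which is (i).

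Part (ii) then follows at once: by Lemma~\ref{lem_est_zetanj}, Lemma~\ref{lem_intzetathree}(v) ($b_0=0$) and (i) we get $\widetilde{\zeta}_0\equiv0$, so $\widetilde{\zeta}_t\to0$ in $C^0_{\textrm{loc}}(\mathbb{R}^2)$; moreover Lemma~\ref{lem_equalb0}(ii) sends the subtracted mean in \eqref{zetanj} to $0$, hence $\zeta_t(t\Lambda_{t,-}^{(1)}z+tp_t^{(1)})=\widetilde{\zeta}_t(z)+o(1)\to0$ as well. For (iii), inserting \eqref{eq_zeta_final} gives $\int_{B_{2R_0\Lambda_{t,+}^{(1)}}(0)}(\ln|z|)\Delta\widetilde{\zeta}_t\,{d}z=-\int_{B_{2R_0\Lambda_{t,+}^{(1)}}(0)}(\ln|z|)\big(8\widetilde{\zeta}_t+O(t\ln t)+O(t^2|z|^2)+O(t^2|z|)\big)/(1+|z|^2)^2\,{d}z$, and since $|\ln|z||/(1+|z|^2)^2\in L^1(\mathbb{R}^2)$, $|\widetilde{\zeta}_t|\le2$ and $\widetilde{\zeta}_t\to0$, dominated convergence handles the main part while the error terms contribute $O(t(\ln t)^2)\to0$. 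I expect the main obstacle to be the analysis of $\mathrm{(III)}$ in (i): proving that the away part is genuinely negligible (this uses the sharp localization estimates of Theorems D and E and of Lemma~\ref{lem_intzetathree}(iii)), and that the bubble part has the stated limit — which demands careful tracking of the coordinate changes so that $-\Delta_x\zeta_t\,{d}x$ rescales \emph{exactly} to $-\Delta_z\widetilde{\zeta}_t\,{d}z$ (the conformal factor $e^{2\varphi}$, the function $\bar\varphi$, and the powers $t^4,t^6$ must all cancel), and the crucial cancellation $\int_{\mathbb{R}^2}Y_i/(1+|z|^2)^2\,{d}z=0$, which removes the $z$‑independent contribution so that the invertible Hessian of $\ln(|y-\underline{e}|^2|y+\underline{e}|^2)$ at the origin is what remains.
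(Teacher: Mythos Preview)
Your proposal is correct and uses the same translation--type Pohozaev identity as the paper, but applied on a different domain. You integrate on the \emph{fixed} ball $B_{r_0}(0)$ in the $x$--variable and kill the boundary terms $\mathrm{(I)},\mathrm{(II)}$ via $\zeta_t\to0$ in $C^1_{\mathrm{loc}}(M\setminus\{0\})$. The paper instead integrates on $B_{2R\Lambda_{t,+}^{(1)}}(0)$ in the $z$--variable (equivalently $B_{2Rt}(tp_t^{(1)})$ in $x$) for a free parameter $R\ge R_0$: the boundary terms are estimated through the pointwise gradient bound $\nabla_z\widetilde\zeta_t=O(t)/|z|+o(t^2)$ coming from the Green representation in Lemma~\ref{lem_estat} together with \eqref{gradtilv3}, and the outcome is $|b_i|=O(1/R)+o(1)$, after which one sends $R\to\infty$. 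So your opening claim that the rescaled ball ``carries no information about $b_1,b_2$'' is too strong --- the paper does extract it, just at order $(\Lambda_{t,-}^{(1)})^2\sim t^2$ rather than at order $1$. What your choice buys is a simpler reason for the boundary contributions to vanish (elementary elliptic regularity from Lemma~\ref{lem_equalb0} instead of the refined estimate~\eqref{final_zetagr}); what it costs is the extra step of showing that the portion of $\mathrm{(III)}$ over $B_{r_0}\setminus B_{2R_0t}(tp_t^{(1)})$ is $o(1)$. One technical caveat: your Taylor expansion of $\nabla_y\ln h_t$ about $y=0$ is only useful where $|y|=|\Lambda_{t,-}^{(1)}z+p_t^{(1)}|$ is small, i.e.\ on $|z|\le R$ for fixed $R$; on the full range $|z|\le 2R_0\Lambda_{t,+}^{(1)}$ one has $|y|$ up to $2R_0$, so a head/tail split in $R$ (and then $R\to\infty$) is still implicitly needed --- at which point the two arguments become quite close. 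In both routes the decisive algebraic fact is the invertibility of $\nabla^2_y\ln(|y-\underline{e}|^2|y+\underline{e}|^2)\big|_{y=0}$, exactly as in Lemma~\ref{lem_diff_p_t}. Your arguments for (ii) and (iii) coincide with the paper's.
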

\begin{proof} (i)
  We have
  \begin{equation}\label{div1}
    \begin{aligned}
   & \mbox{div}\left(\nabla\widetilde{\zeta}_tD_l\widetilde{v}_t^{(i)}+\nabla \widetilde{v}_t^{(i)}D_l\widetilde{\zeta}_t-\nabla\widetilde{\zeta}_t\cdot\nabla\widetilde{v}_t^{(i)}e_l\right)
    =\Delta\widetilde{\zeta}_tD_l\widetilde{v}_t^{(i)}+\Delta\widetilde{v}_t^{(i)}D_l\widetilde{\zeta}_t
    \\&=\frac{\Delta(\widetilde{v}_t^{(1)}-\widetilde{v}_t^{(2)})}{\|u_t^{(1)}-u_t^{(2)}\|_{L^\infty(M)}}D_l\widetilde{v}_t^{(i)}+\frac{\Delta\widetilde{v}_t^{(i)}D_l(\widetilde{v}_t^{(1)}-\widetilde{v}_t^{(2)})}{\|u_t^{(1)}-u_t^{(2)}\|_{L^\infty(M)}}
    \\&=-\frac{(\Lambda_{t,-}^{(1)})^2h_t(\Lambda_{t,-}^{(1)}z+p_t^{(1)})(e^{\widetilde{v}_t^{(1)}}-e^{\widetilde{v}_t^{(2)}})D_l\widetilde{v}_t^{(i)}}{\|u_t^{(1)}-u_t^{(2)}\|_{L^\infty(M)}}
     -\frac{(\Lambda_{t,-}^{(1)})^2h_t(\Lambda_{t,-}^{(1)}z+p_t^{(1)}) e^{\widetilde{v}_t^{(i)}} D_l(\widetilde{v}_t^{(1)}-\widetilde{v}_t^{(2)})}{\|u_t^{(1)}-u_t^{(2)}\|_{L^\infty(M)}}
    \\&=-\mbox{div}\left(\frac{(\Lambda_{t,-}^{(1)})^2h_t(\Lambda_{t,-}^{(1)}z+p_t^{(1)})
    (e^{\widetilde{v}_t^{(1)}}-e^{\widetilde{v}_t^{(2)}})e_l}{\|u_t^{(1)}-u_t^{(2)}\|_{L^\infty(M)}}  \right)
    \\&+\Bigg[\frac{(\Lambda_{t,-}^{(1)})^2h_t(\Lambda_{t,-}^{(1)}z+p_t^{(1)})
    (e^{\widetilde{v}_t^{(1)}}-e^{\widetilde{v}_t^{(2)}})}{\|u_t^{(1)}-u_t^{(2)}\|_{L^\infty(M)}}  D_l\left((-1)^{i}(\widetilde{v}_t^{(1)}-\widetilde{v}_t^{(2)})+\ln h_t(\Lambda_{t,-}^{(1)}z+p_t^{(1)})\right)\Bigg].
    \end{aligned}
  \end{equation}
  For any constant $R\ge R_0$, let $r_{t,R}=2R\Lambda_{t,+}^{(1)}$. Then  \eqref{div1} implies
   \begin{equation}\label{intdiv1}
    \begin{aligned}
   & \int_{\partial B_{r_{t,R}}(0)} \left(2\nabla\widetilde{\zeta}_tD_l \widetilde{v}_t^{(1)} +2\nabla \widetilde{v}_t^{(2)}D_l\widetilde{\zeta}_t-\nabla\widetilde{\zeta}_t\cdot(\nabla\widetilde{v}_t^{(1)}+\nabla\widetilde{v}_t^{(2)})e_l\right)\cdot\frac{z}{|z|} d\sigma
    \\&=-2\int_{\partial B_{r_{t,R}}(0)}\left(\frac{(\Lambda_{t,-}^{(1)})^2h_t(\Lambda_{t,-}^{(1)}z+p_t^{(1)})
    (e^{\widetilde{v}_t^{(1)}}-e^{\widetilde{v}_t^{(2)}})e_l}{\|u_t^{(1)}-u_t^{(2)}\|_{L^\infty(M)}}  \right)\cdot\frac{z}{|z|}d\sigma
    \\&+2\Bigg[\int_{  B_{r_{t,R}}(0)}\frac{(\Lambda_{t,-}^{(1)})^2h_t(\Lambda_{t,-}^{(1)}z+p_t^{(1)})
    (e^{\widetilde{v}_t^{(1)}}-e^{\widetilde{v}_t^{(2)}})}{\|u_t^{(1)}-u_t^{(2)}\|_{L^\infty(M)}}
 D_l\left( \ln h_t(\Lambda_{t,-}^{(1)}z+p_t^{(1)})\right)dz\Bigg]. \end{aligned}
  \end{equation}
    By \eqref{final_zetagr} and Lemma \ref{lem_estat}, we have  if   $x\in M\setminus B_{2R_0t}(tp_t^{(1)})$, then
  \begin{equation}\label{gratilv}
    \begin{aligned}\nabla_x\zeta_t(x)&=-\frac{A_t(x-tp_t^{(1)})1_{B_{r_0}(tp_t^{(1)})}(x)}{2\pi|x-tp_t^{(1)}|^2} +\frac{O(t^2)1_{B_{r_0}(tp_t^{(1)})}(x)}{|x-tp_t^{(1)}|^2}+o(1)
     = \frac{O(t)1_{B_{r_0}(tp_t^{(1)})}(x)}{|x-tp_t^{(1)}|}+o(1),\end{aligned}
  \end{equation}
  which implies \begin{equation}\label{gratilv2}
    \begin{aligned}\nabla_z\widetilde{\zeta}_t(z)&=t\Lambda_{t,-}^{(1)}\nabla_x\zeta_t(x)\Big|_{x=t\Lambda_{t,-}^{(1)}z+tp_t^{(1)}}
     = \frac{O(t)}{|z|}1_{B_{\frac{r_0\Lambda_{t,+}^{(1)}}{t}}(0)}(z)+o(t^2),\end{aligned}
  \end{equation}for $2R_0\Lambda_{t,+}^{(1)}\le |z|\le \frac{r_0\Lambda_{t,+}^{(1)}}{t}$.\\
  Therefore, in view of \eqref{gradtilv3} and \eqref{gratilv2} we get that  \begin{equation}\label{eq_zetal}
  \begin{aligned}  \mbox{(LHS) of } \eqref{intdiv1} &=|z|\left(O(t^3\ln t)+O(t^4|z|)+\frac{O(1)}{|z|}\right)\left( O(\frac{t}{|z|})+o(t^2)\right)\Big|_{z\in \partial B_{2R\Lambda_{t,+}^{(1)}}(0)}.
  \\&= O(t^{4}(\ln t) R  )+O(\frac{t^2}{R})+o(t^4)R^2+o(t^2).\end{aligned}
\end{equation}
To estimate (RHS) of \eqref{intdiv1}, by the change of variables $x=t\Lambda_{t,-}^{(1)}z+tp_t^{(1)}$, we see that if $|z|=2R\Lambda_{t,+}^{(1)}\ge 2R_0\Lambda_{t,+}^{(1)}$, then Theorem D implies
\begin{equation}\label{eq_zetar1}
  \begin{aligned}  &-2\int_{\partial B_{r_{t,R}}(0)}\left(\frac{ (\Lambda_{t,-}^{(1)})^2h_t(\Lambda_{t,-}^{(1)}z+p_t^{(1)})
    (e^{\widetilde{v}_t^{(1)}}-e^{\widetilde{v}_t^{(2)}})e_l}{\|u_t^{(1)}-u_t^{(2)}\|_{L^\infty(M)}}  \right)\cdot\frac{z}{|z|}d\sigma(z)
  \\&=\frac{-2\Lambda_{t,+}^{(1)}}{t}\int_{\partial B_{2Rt}(tp_t^{(1)})} \frac{\rho(\Lambda_{t,-}^{(1)})^2\left(h(ty) |y-\underline{e}|^2|y+\underline{e}|^2e^{-R_t(ty)}\right)\Big|_{y=\Lambda_{t,-}^{(1)}z+p_t^{(1)}}
      }{\|u_t^{(1)}-u_t^{(2)}\|_{L^\infty(M)}}  \\& \times\frac{ (e^{\widetilde{u}_t^{(1)}}-e^{\widetilde{u}_t^{(2)}})t^6e_l\cdot(x-tp_t^{(1)})}{|x-tp_t^{(1)}|}d\sigma(x)
     \\&=O(1)\left(\int_{\partial B_{2Rt}(tp_t^{(1)})}\frac{t^2h(x) e^{-G_t(x)}
      (e^{\widetilde{u}_t^{(1)}}-e^{\widetilde{u}_t^{(2)}})e_l}{\|u_t^{(1)}-u_t^{(2)}\|_{L^\infty(M)}} \cdot\frac{x-tp_t^{(1)}}{|x-tp_t^{(1)}|}d\sigma(x)\right)
 =O(t^3R).     \end{aligned}
\end{equation}
Let $x=t\Lambda_{t,-}^{(1)}z+tp_t^{(1)}$ and $y=\Lambda_{t,-}^{(1)}z+p_t^{(1)}$. \\ Then $r_0\ge |x-tp_t^{(1)}|=t\Lambda_{t,-}^{(1)}|z|=2Rt\ge 2R_0t$ implies
$\frac{r_0}{t}\ge |y-p_t^{(1)}|=\Lambda_{t,-}^{(1)}|z|=2R\ge 2R_0$. So we see that if $2R_0\le |y-p_t^{(1)}|\le \frac{r_0}{t}$, then
\begin{equation}\label{bddh}
  \begin{aligned}\nabla_y\ln h_t(y)&=\nabla_y(\ln \rho h(ty) e^{-R_t(ty)+\psi(ty)}+2\ln |y-\underline{e}|+2\ln |y+\underline{e}|)
  \\&=t\nabla_{ty}\ln (\rho h(ty)e^{-R_t(ty)+\psi(ty)})+O(\frac{1}{|y|})=O(1).
  \end{aligned}
\end{equation}
In view of Lemma \ref{lem_equalb0}-(ii)  and  Lemma \ref{lem_intzetathree}-(iii),  we have
\begin{equation}\label{toge}
  \zeta_t-\frac{ \int_Mhe^{u_t^{(1)}-G_t}\zeta_t{d}v_g}{\int_Mhe^{u_t^{(1)}-G_t}{d}v_g}=o(1)\ \ \textrm{in}\ \ M\setminus B_{\frac{tR_0}{2}}(tp_t^{(1)}).
\end{equation}
Together with \eqref{difftilv}, we  see that \begin{equation}\label{eq_zetar3}
    \begin{aligned}
  & \frac{1}{\|u_t^{(1)}-u_t^{(2)}\|_{L^\infty(M)}}\int_{  B_{2R\Lambda_{t,+}^{(1)}}(0)\setminus B_{2R_0\Lambda_{t,+}^{(1)}}(0)}(\Lambda_{t,-}^{(1)})^2h_t(\Lambda_{t,-}^{(1)}z+p_t^{(1)})
    (e^{\widetilde{v}_t^{(1)}}-e^{\widetilde{v}_t^{(2)}}) D_l\left( \ln h_t(\Lambda_{t,-}^{(1)}z+p_t^{(1)})\right)
    dz
    \\&=O(\frac{t^3}{\|u_t^{(1)}-u_t^{(2)}\|_{L^\infty(M)}})\int_{  B_{2R\Lambda_{t,+}^{(1)}}(0)\setminus B_{2R_0\Lambda_{t,+}^{(1)}}(0)}  h_t(\Lambda_{t,-}^{(1)}z+p_t^{(1)})
    (e^{\widetilde{v}_t^{(1)}}-e^{\widetilde{v}_t^{(2)}})D_{y_l} \ln h_t(y)\Big|_{y=\Lambda_{t,-}^{(1)}z+p_t^{(1)}}
    dz
    \\&= O(t^3) \int_{  B_{2R\Lambda_{t,+}^{(1)}}(0)\setminus B_{2R_0\Lambda_{t,+}^{(1)}}(0)} h(t\Lambda_{t,-}^{(1)}z+tp_t^{(1)})e^{-R_t(t\Lambda_{t,-}^{(1)}z+tp_t^{(1)})}\\&\times |\Lambda_{t,-}^{(1)}z+p_t^{(1)}-\underline{e}|^2|\Lambda_{t,-}^{(1)}z+p_t^{(1)}+\underline{e}|^2 O(t^6)e^{\widetilde{u}_t^{(1)}(t\Lambda_{t,-}^{(1)}z+tp_t^{(1)})}
    \\&\times\frac{(\widetilde{u}_t^{(1)}(t\Lambda_{t,-}^{(1)}z+tp_t^{(1)})-
    \widetilde{u}_t^{(2)}(t\Lambda_{t,-}^{(1)}z+tp_t^{(1)})+O(\|u_t^{(1)}-u_t^{(2)}\|_{L^\infty(M)}^2)}{\|u_t^{(1)}-u_t^{(2)}\|_{L^\infty(M)}}
    dz
     \\&= O(t) \int_{  B_{2Rt}(tp_t^{(1)})\setminus B_{2R_0t}(tp_t^{(1)})}h(x)|x-t\underline{e}|^2|x+t\underline{e}|^2
     e^{-R_t(x)} e^{\widetilde{u}_t^{(1)}(x)}\\&\times (\zeta_t(x)-\frac{ \int_Mhe^{u_t^{(1)}-G_t}\zeta_t{d}v_g}{\int_Mhe^{u_t^{(1)}-G_t}{d}v_g}+O(\|u_t^{(1)}-u_t^{(2)}\|_{L^\infty(M)}))
    dx
     \\&= O(t) \int_{  B_{2Rt}(tp_t^{(1)})\setminus B_{2R_0t}(tp_t^{(1)})}o(1)
    dx
    =o(t^3)R^2.\end{aligned}
  \end{equation}
  To estimate $2\int_{B_{2R_0\Lambda_{t,+}^{(1)}}(0)}\frac{(\Lambda_{t,-}^{(1)})^2h_t(\Lambda_{t,-}^{(1)}z+p_t^{(1)})
    (e^{\widetilde{v}_t^{(1)}}-e^{\widetilde{v}_t^{(2)}})D_l\left( \ln h_t(\Lambda_{t,-}^{(1)}z+tp_t^{(1)})\right)}{\|u_t^{(1)}-u_t^{(2)}\|_{L^\infty(M)}}
    dz $, we note that if $|z|\le 2R_0 \Lambda_{t,+}^{(1)}$, then there is $\theta\in(0,1)$ such that
    \begin{equation}\label{dl}
      \begin{aligned}
     & D_{z_l}[\ln h_t(\Lambda_{t,-}^{(1)} z+p_t^{(1)})]
     =[D_{y_l}\ln h_t(y)]\Big|_{y=\Lambda_{t,-}^{(1)}z +p_t^{(1)}}\Lambda_{t,-}^{(1)}
     \\&=\Bigg[D_{y_l}\ln h_t(y)\Big|_{y=p_t^{(1)}}+\sum_{k=1}^2D_{y_k}D_{y_l}\ln h_t(y)\Big|_{y=p_t^{(1)}}\Lambda_{t,-}^{(1)}z_k  +O(D^3\ln h_t\Big|_{y=\theta\Lambda_{t,-}^{(1)}z +p_t^{(1)}}t^2|z|^2)\Bigg]\Lambda_{t,-}^{(1)}
    \\&=\Bigg[D_{y_l}\ln h_t(y)\Big|_{y=p_t^{(1)}}+\sum_{k=1}^2D_{y_k}D_{y_l}\ln h_t(y)\Big|_{y=p_t^{(1)}}\Lambda_{t,-}^{(1)}z_k +O(t^2|z|^2)\Bigg]\Lambda_{t,-}^{(1)}.
    \end{aligned}
          \end{equation}
Moreover, by using the proof of Lemma \ref{lem_diff_p_t} and  \eqref{wemayassume}, we get that
\begin{equation}\label{rhop}
  \begin{aligned}\nabla_{y}\ln h_t(y)\Big|_{y=p_t^{(1)}}&=-t \nabla_x G_{*,t}^{(1)}(x)\Big|_{x=tp_t^{(1)}}+O(t\|\widetilde{\phi}_t\|_*+t^2\ln t)
  =O(t^2\ln t).
  \end{aligned}
\end{equation}Now we obtain\begin{equation}\label{dl0}
      \begin{aligned}
     & D_{z_l}[\ln h_t(\Lambda_{t,-}^{(1)}z +p_t^{(1)})]
        =\Bigg[ \sum_{k=1}^2D_{y_k}D_{y_l}\ln h_t(y)\Big|_{y=p_t^{(1)}} \Lambda_{t,-}^{(1)}z_k +O(t^2\ln t)+O(t^2|z|^2)\Bigg]\Lambda_{t,-}^{(1)}.
    \end{aligned}
          \end{equation}
    Together with \eqref{tilvt} and \eqref{difftilv0}, we see that
  \begin{equation}\label{eq_zetar4}
    \begin{aligned}
     &2\int_{     B_{2R_0\Lambda_{t,+}^{(1)}}(0)}\frac{(\Lambda_{t,-}^{(1)})^2h_t(\Lambda_{t,-}^{(1)}z+p_t^{(1)})
    (e^{\widetilde{v}_t^{(1)}}-e^{\widetilde{v}_t^{(2)}})}{\|u_t^{(1)}-u_t^{(2)}\|_{L^\infty(M)}}
     D_l\left( \ln h_t(\Lambda_{t,-}^{(1)}z+p_t^{(1)})\right)dz
        \\&=2\int_{ B_{2R_0\Lambda_{t,+}^{(1)}}(0)}\frac{(\Lambda_{t,-}^{(1)})^2h_t(\Lambda_{t,-}^{(1)}z+p_t^{(1)})
    e^{\widetilde{v}_t^{(1)}}(1-e^{\widetilde{v}_t^{(2)}-\widetilde{v}_t^{(1)}})}{\|u_t^{(1)}-u_t^{(2)}\|_{L^\infty(M)}}\\&\times[ \sum_{k=1}^2D_{y_k}D_{y_l}\ln h_t(y)\Big|_{y=p_t^{(1)}}\Lambda_{t,-}^{(1)}z_k +O(t^2\ln t)+O(t^2|z|^2)]\Lambda_{t,-}^{(1)}
    dz
    \\&=2\Bigg\{\int_{ B_{2R_0\Lambda_{t,+}^{(1)}}(0)}\frac{h_t(\Lambda_{t,-}^{(1)}z+p_t^{(1)})(1+|\widetilde{\eta}_t^{(1)}| +t^2|z|)}{C_t^{(1)}(1+|z+O(t^2)|^2)^2}
   (\widetilde{\zeta}_t(z)+O(\|u_t^{(1)}-u_t^{(2)}\|_{L^\infty(M)}))
 \\&\times[ \sum_{k=1}^2D_{y_k}D_{y_l}\ln h_t(y)\Big|_{y=p_t^{(1)}}\Lambda_{t,-}^{(1)}z_k +O(t^2\ln t)+O(t^2|z|^2)]\Lambda_{t,-}^{(1)}
    dz\Bigg\}.
    \end{aligned}\end{equation}
    We note from \eqref{rhop} that
    \begin{equation}\label{htexp}\begin{aligned}
    \frac{h_t(\Lambda_{t,-}^{(1)}z+p_t^{(1)})}{C_t^{(1)}}&=  \frac{8h_t(\Lambda_{t,-}^{(1)}z+p_t^{(1)})}{h_t(p_t^{(1)})} =8\Bigg(1+\frac{\nabla h_t(p_t^{(1)})}{h_t(p_t^{(1)})}\cdot \Lambda_{t,-}^{(1)}z +O(t^2|z|^2)\Bigg)\\&=8+O(t^2\ln t)+O(t^2|z|^2).
    \end{aligned}\end{equation}
    By using Lemma \ref{lem_est_zetanj} and Lemma \ref{lem_intzetathree}-(v), we have
    \begin{equation}\label{zetacon}
      \widetilde{\zeta}_t\to \sum_{i=1}^2\frac{b_iz_i}{1+|z|^2}
    \ \ \ \textrm{in} \ \ C_{\textrm{loc}}^0(\mathbb{R}^2).
    \end{equation} Together with Theorem E, we have for any $R>1$,
   \begin{equation} \label{eq_zetar5}
    \begin{aligned}
     &2\int_{  B_{2R_0\Lambda_{t,+}^{(1)}}(0)}\frac{(\Lambda_{t,-}^{(1)})^2h_t(\Lambda_{t,-}^{(1)}z+p_t^{(1)})
    (e^{\widetilde{v}_t^{(1)}}-e^{\widetilde{v}_t^{(2)}})}{\|u_t^{(1)}-u_t^{(2)}\|_{L^\infty(M)}}
  D_l\left( \ln h_t(\Lambda_{t,-}^{(1)}z+tp_t^{(1)})\right)dz
    \\
  &=16\int_{ B_{R}(0)}\frac{(1+O(t^2\ln t(|z|+1)^{\varepsilon})+O(t^2|z|^2))}{(1+|z|^2)^2}
    (\sum_{i=1}^2\frac{b_iz_i}{(1+|z|^2)}+o(1))
 \\&\times[ \sum_{k=1}^2D_{y_k}D_{y_l}\ln h_t(y)\Big|_{y=p_t^{(1)}}\Lambda_{t,-}^{(1)}z_k +O(t^2\ln t)+O(t^2|z|^2)]\Lambda_{t,-}^{(1)}
    dz
    \\&+\int_{ B_{2R_0\Lambda_{t,+}^{(1)}}(0)\setminus B_R(0)}\frac{t}{|z|^4}(O(t|z|)+O(t^2\ln t) )dz
   \\
  &=  8  (\Lambda_{t,-}^{(1)})^2 \sum_{k=1}^2D_{y_k}D_{y_l}\ln h_t(y)\Big|_{y=p_t^{(1)}}\int_{ B_{R}(0)}\frac{b_k|z|^2}{(1+|z|^2)^3}
 +\frac{O(t^2)}{R}+O(t^3\ln R)+O(t^4 R)+o(t^2),\end{aligned}
  \end{equation}for some $\varepsilon\in(0,1)$.
Therefore, we obtain  from \eqref{eq_zetar1}, \eqref{eq_zetar3}, and \eqref{eq_zetar5} that  \begin{equation}\label{eq_zetarf}
  \begin{aligned}& \mbox{(RHS) of } \eqref{intdiv1}\\&=8 (\Lambda_{t,-}^{(1)})^2 \sum_{k=1}^2D_{y_k}D_{y_l}\ln h_t(y)\Big|_{y=p_t^{(1)}}\left(\int_{ \mathbb{R}^2}\frac{b_k|z|^2}{(1+|z|^2)^3}{d}z+O(R^{-2})\right)
 +\frac{O(t^2)}{R}+O(t^3R^2)+o(t^2).\end{aligned}
\end{equation}
Since $\mbox{det}[\nabla^2(\ln h_t(p_t^{(1)}))]=-16+O(t)$,  \eqref{eq_zetal} and \eqref{eq_zetarf} imply  for $i=1,2$,
\begin{equation}\label{biest}
  |b_i|=O(\frac{1}{R})+o(1)\ \ \textrm{for any large }\ \ R\gg1.
\end{equation}
So we obtain
$b_1=b_2=0,$ and prove Lemma \ref{lem_bj0}-(i).

(ii) By Lemma \ref{lem_est_zetanj}, Lemma \ref{lem_intzetathree}-(v), and $b_1=b_2=0$, we have \[\widetilde{\zeta}_t(z)=\zeta_t(t\Lambda_{t,-}^{(1)}z+tp_{t}^{(1)})-\frac{ \int_Mhe^{u_t^{(1)}-G_t}\zeta_t{d}v_g}{\int_Mhe^{u_t^{(1)}-G_t}{d}v_g}\to0\ \ \textrm{in}\ \ \ C_{\textrm{loc}}^0(\mathbb{R}^2).\]
Together with Lemma \ref{lem_equalb0}-(ii), we obtain Lemma \ref{lem_bj0}-(ii).

(iii) In view of Lemma \ref{lem_est_zetanj}, \eqref{eq_zeta_final},  and $b_0=b_1=b_2=0$, it is easy to see that Lemma \ref{lem_bj0}-(iii) holds.
\end{proof}

Now we are going to complete the proof of Theorem \ref{thm_nonconcen}.\\
\textbf{Proof of Theorem \ref{thm_nonconcen}}

  Let $x_t^*$ be a maximum point of $\zeta_t$. So we have
  \begin{equation}\label{asymp0}
   |\zeta_t(x_t^*)|=1.
 \end{equation}    Then from Lemma \ref{lem_equalb0}, we have
 \begin{equation}\label{asymp1}
   \lim_{t\to0}x_t^*=0.
 \end{equation}Moreover, by     Lemma \ref{lem_intzetathree}-(iii) and Lemma \ref{lem_bj0}, we see that
  \begin{equation}\label{asymp2}\Lambda_{t,-}^{(1)}t\ll  s_t:=|x_t^* -tp_{t}^{(1)}|\le \frac{tR_0}{2}.
 \end{equation}
 Let     $\hat{\zeta}_t(\xi)=\zeta_t(s_t \xi+tp_{t}^{(1)})=\widetilde{\zeta}_t(\Lambda_{t,+}^{(1)} \frac{s_t}{t} \xi)+\frac{ \int_Mhe^{u_t^{(1)}-G_t}\zeta_t{d}v_g}{\int_Mhe^{u_t^{(1)}-G_t}{d}v_g}$. By \eqref{eq_zeta_final}, $\hat{\zeta}_t$ satisfies
 \begin{equation}\label{eq_tilzeta}\begin{aligned}
  0&= \Delta \hat{\zeta}_t+ O((\Lambda_{t,+}^{(1)})^2 \frac{s_t^2}{t^2})\frac{(1+\frac{s_t^2}{t^2}|\xi|^2)}{(1+(\Lambda_{t,+}^{(1)})^2 \frac{s_t^2}{t^2}|\xi|^2)^2}\ \ \ \textrm{for} \ \  |\xi|\le \frac{r_0}{s_t}.
\end{aligned} \end{equation}By \eqref{asymp0}, we have  \begin{equation}\label{tilzeta0}\Big|\hat{\zeta}_t\Big(\frac{x_t^*-tp_{t}^{(1)}}{s_t}\Big)\Big|=|\zeta_t(x_t^*)|=1.\end{equation} By \eqref{asymp2} and $|\hat{\zeta}_t|\le 1$,  we see that $\hat{\zeta}_t\to
\hat{\zeta}_0$ in any compact subset of $\mathbb{R}^2\setminus\{0\},$ where   $\hat{\zeta}_0$ satisfies $\Delta \hat{\zeta}_0=0$ in $\mathbb{R}^2\setminus\{0\}$.
Since $|\hat{\zeta}_0|\le 1$,  we have $\Delta \hat{\zeta}_0=0$ in $\mathbb{R}^2$. So $\hat{\zeta}_0$ is a constant.
From $ \frac{|x_t^*-tp_{t}^{(1)}|}{s_t}=1$ and \eqref{tilzeta0}, we have $\hat{\zeta}_0\equiv1$ or $\hat{\zeta}_0\equiv-1$. So  we have
\begin{equation}\label{compr}
  |\zeta_t(x)|\ge \frac{1}{2}\ \ \textrm{if}\ \ \ \frac{s_t}{2}\le |x-tp_{t}^{(1)}|\le s_t.
\end{equation}  By Lemma \ref{ref}-(ii), Lemma \ref{lem_bj0}, and Lemma \ref{lem_estat}, we see that if $\frac{s_t}{2}\le |x-tp_{t}^{(1)}|\le s_t$, then
\begin{equation*}\begin{aligned}
    \zeta_t(x)&=O\left(\ln t\right)\int_{B_{2R_0\Lambda_{t,+}^{(1)}}(0)}\Delta\widetilde{\zeta}_t{d}z+
    o(1)=o(1)\ \ \textrm{as} \ \ \ t\to0,\end{aligned}
          \end{equation*}which contradicts \eqref{compr}. So we complete the proof of Theorem \ref{thm_nonconcen}.\hfill$\Box$

\end{document}